\newtheorem{thm}{Theorem}[section]
\newtheorem{prop}{Proposition}[section]
\newtheorem{lm}{Lemma}[section]
\newtheorem{cor}{Corollary}[section]
\newtheorem{conj}{Conjecture}[section]
\theoremstyle{definition}
\newtheorem{dfn}{Definition}[section]
\theoremstyle{remark}
\newtheorem{rem}{Remark}[section]
\newcommand{\mf}[1]{\mathfrak{#1}}
\newcommand{\mc}[1]{\mathcal{#1}}
\newcommand{\rr}{\mathbb{R}}
\newcommand{\zz}{\mathbb{Z}}
\newcommand{\cc}{\mathbb{C}}
\newcommand{\J}{\mathbb{J}}
\newcommand{\N}{\mathbf{N}}
\newcommand{\I}{\mathbf{I}}
\newcommand{\e}{\mathbf{e}}
\newcommand{\fn}{\mf{N}}
\newcommand{\cn}{\mathcal{N}}
\newcommand{\ey}{\frac{1}{2}}
\newcommand{\R}{\mf{R}}
\newcommand{\A}{\mc{A}}
\newcommand{\ao}{\mc{A}_{\om}}
\newcommand{\dl}{\delta}
\newcommand{\ep}{\varepsilon}
\newcommand{\Lmd}{\Lambda}
\newcommand{\lmd}{\lambda}
\newcommand{\om}{\omega}
\newcommand{\sg}{\sigma}
\newcommand{\zt}{\zeta}
\newcommand{\tht}{\theta}
\newcommand{\ldn}{\Lambda^{D_N}}
\newcommand{\hld}{\hat{\Lambda}^{D_N}}
\newcommand{\lzn}{ \Lmd^{\zz_{N}}}
\newcommand{\qd}{\dot{q}}
\newcommand{\xd}{\dot{x}}
\newcommand{\zd}{\dot{z}}
\newcommand{\qt}{\tilde{q}}
\newcommand{\yt}{\tilde{y}}
\newcommand{\xt}{\tilde{x}}
\newcommand{\ztl}{\tilde{z}}
\newcommand{\qo}{q^{\om}}
\newcommand{\qe}{q^{\ep}}
\newcommand{\qn}{q^n}
\newcommand{\qf}{\mf{q}}
\newcommand{\xf}{\mf{x}}
\newcommand{\yf}{\mf{y}}
\newcommand{\zf}{\mf{z}}
\newcommand{\etn}{\eta^n}
\newcommand{\ztn}{\zt^n}
\newcommand{\ej}{e^{\J \om t}}
\newcommand{\hnm}{H^1_{2\pi}}
\newcommand{\lnm}{L^2_{2\pi}}
\begin{document}

	\title[connecting linear chains]{Connecting planar linear chains in the spatial $N$-body problem}
	\author{Guowei Yu}
	\email{guowei.yu@unito.it}

	\thanks{The author acknowledges the support of the ERC Advanced Grant 2013 No.  339958 ``Complex Patterns for Strongly Interacting Dynamical Systems - COMPAT''}
	
	\address{Dipartimento di Matematica ``Giuseppe Peano'', Universit\`a degli Studi di Torino, Italy}

	\begin{abstract} 
	     The family of planar linear chains are found as collision-free action minimizers of the spatial $N$-body problem with equal masses under $D_N$ or $D_N \times \zz_2$-symmetry constraint and different types of topological constraints. This generalizes a previous result by the author in \cite{Y15c} for the planar $N$-body problem. In particular, the monotone constraints required in \cite{Y15c} are proven to be unnecessary, as it will be implied by the action minimization property.

	     For each type of topological constraints, by considering the corresponding action minimization problem in a coordinate frame rotating around the vertical axis at a constant angular velocity $\om$, we find an entire family of simple choreographies (seen in the rotating frame), as $\om$ changes from $0$ to $N$. Such a family starts from one planar linear chain and ends at another (seen in the original non-rotating frame).  The action minimizer is collision-free, when $\om=0$ or $N$, but may contain collision for $0 < \om < N$. However all possible collisions must be binary and each collision solution is $C^0$ block-regularizable. 

	     Moreover for certain types of topological constraints, based on results from \cite{BT04} and \cite{CF09}, we show that when $\om$ belongs to some sub-intervals of $[0, N]$, the corresponding minimizer must be a rotating regular $N$-gon contained in the horizontal plane. As a result, this generalizes Marchal's $P_{12}$ family of the three body problem to arbitrary $N \ge 3$. 
	\end{abstract}
	
	\maketitle
	
\section{Introduction} \label{sec:intro}

In $N$-body problem, a \emph{simple choreography} is a special periodic solution, where all the masses chase each other on a single loop. We assume all masses are equal in the rest of the paper (it is still an open problem whether there exists a simple choreography with unequal masses, see \cite{C04}). Well known examples of simple choreographies including the \emph{rotating regular $N$-gon}, the \emph{Figure-Eight} of the three body and the \emph{Super-Eight} of the four body (\cite{BT04}, \cite{CM00}, \cite{Sh14}). All these examples belong to the family of \emph{planar linear chains}, where the corresponding loop is contained inside a two dimension plane and looks like a sequence of consecutive \emph{bubbles} \footnote{A bubble means a planar loop without any self intersection.} along a straight line and symmetric with respect to it.  For example, the rotating regular $N$-gon has one bubble, the Figure-Eight has two and the Super-Eight has three. This family was discovered numerically by Sim\'o \cite{Si00} (for pictures, see \cite{Si00} or \cite{CGMS02}). For the planar $N$-body problem, the author proved the existence of this family in \cite{Y15c}, by finding them as collision-free action minimizers under certain symmetric, topological and monotone constraints.  

The family of planar linear chains is important not only because they look interesting, but also because it may help us understand the global dynamics of the $N$-body problem (\cite{CFM05}, \cite{C08} and \cite{CF08}). To see this, let's consider the spatial $N$-body problem instead of the planar, and furthermore assume the coordinate frame is rotating around the vertical direction (the $z$-axis) at a uniformly angular velocity $\om$. In \cite{CF09} under certain symmetric constraints, using the Lyapunov center theorem and Weinstein-Moser theorem, Chenciner and F\'ejoz proved the local existence of Lyapunov families bifurcating in the vertical direction from some horizontal rotating regular $N$-gon (entirely contained in the $xy$-plane) with $\om$ as a parameter. 

While proving the local existence of these vertical Lyapunov families, they also studied the global existences of these families numerically in \cite{CF09}. Their numerical investigation found these families should exist for a large interval of $\om$ including $\om=0$, and as $\om$ decreases from some positive constant to zero, these families changes continuously from a rotating regular $N$-gon lying the $xy$-plane, to different planar linear chains entirely contained in the $yz$-plane. For example when $N=5$, F\'ejoz found one vertical Lyapunov family ends at a two loop chain, when $\om=0$ (see Figure \ref{fig:5body-p12}) and another one ends at a four loop chain, when $\om=0$ (see  Figure \ref{fig:5body-4loops}). More numerical results regarding families bifurcating from the regular rotating $N$-gon can be found in \cite{CF09} and \cite{Ca17}.  

\begin{figure}
  \centering
  \includegraphics[scale=0.45]{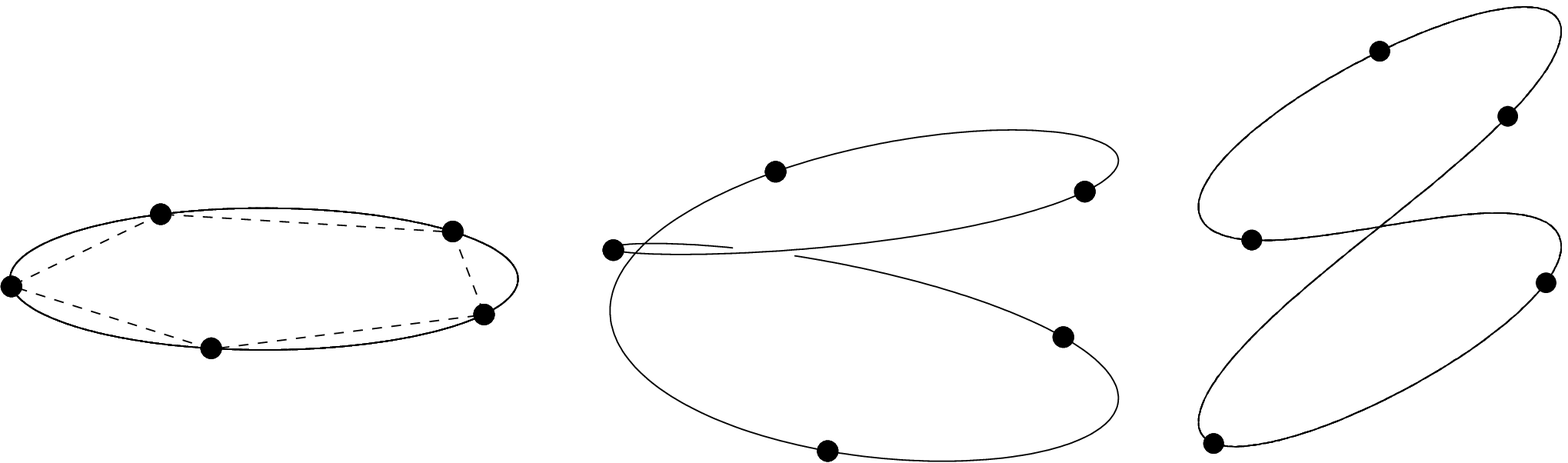}
  \caption{The 5 body figure eight}
  \label{fig:5body-p12}
\end{figure}

\begin{figure}
  \centering
  \includegraphics[scale=0.40]{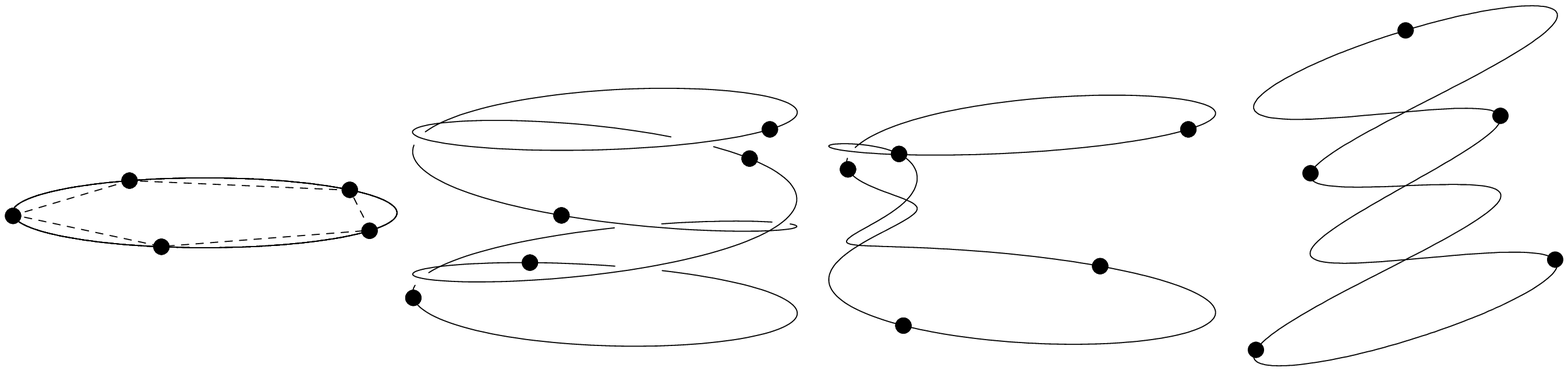}
  \caption{The 5 body 4 loops}
  \label{fig:5body-4loops}
\end{figure}

At the same time, by looking for minimizers of the action functional in uniform rotating frame under certain symmetric constraints, we can also find families of solutions of the $N$-body problem parameterized by the angular velocity $\om$ that belongs to a large interval, see \cite{BT04}, \cite{BFT08} and \cite{CF09}. In particular for $N=3$, using this approach Marchal (\cite{Mc00}) found a family of simple choreographies (seen in the rotating frame) known as the $P_{12}$ family: it starts with a figure eight solution contained in the $yz$-plane, when $\om=0$; as $\om$ increases, the two loops start to fold; when $\om$ reaches a small neighborhood of $2$, the two loops coincide with each other and the solution becomes a Lagrange relative equilibrium lying in the $xy$-plane and rotating twice within the given period, and moreover the size of the Lagrange relative equilibrium diverges as $\om$ approaches to $2$ (see Figure \ref{fig:p12} for pictures made by F\'ejoz).

\begin{figure}
  \centering
  \includegraphics[scale=0.45]{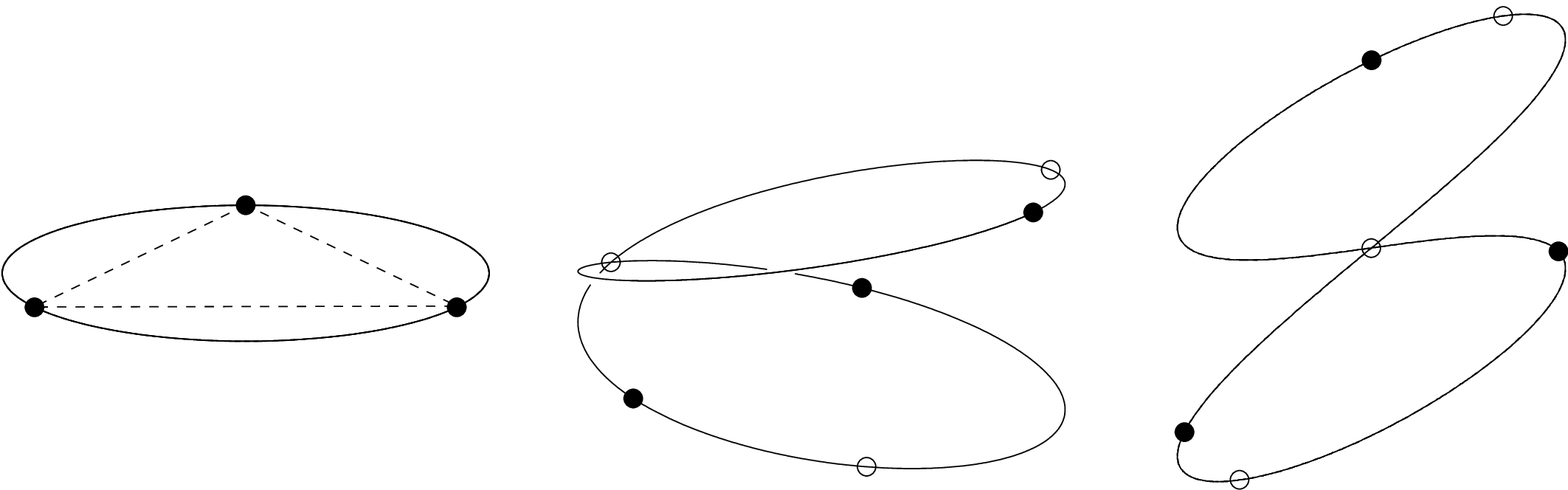}
  \caption{The $P_{12}$ family of three body}
  \label{fig:p12}
\end{figure}

\begin{rem} \begin{enumerate}
\item The problem of collision was not addressed by Marchal in \cite{Mc00}. Later in \cite{C02}, Chenciner showed they are all collision-free. 
\item In this paper, we will give a rigorous proof that when the angular velocity is zero, the minimizer from the $P_{12}$ family is a figure eight entirely contained in the $yz$-plane (which makes it precise the Figure-Eight proven by Chenciner and Montgomery in \cite{CM00}). Previously although widely believed and suggested by numerical results, this fact has never been proven (\cite{C02}, \cite{Fe06}).
\end{enumerate}
\end{rem}

When $N=3$, under the corresponding symmetric constraint, there is only one vertical Lyapunov family and likely it coincides with the $P_{12}$ family (although no proof is available). However for $N>3$, as pointed out in \cite{CF09}, there are more than one vertical Lyapunov families that all satisfies the same symmetric constraints, so the above action minimization approach can help us detect at most one of these families. To distinguish these families, one idea is to impose additional topological constraints to the minimization problem. This idea was used by the author in \cite{Y15c}, when we tried to establish the existence of the family of planar linear chains, as all the planar linear chains satisfying the same symmetric constraints and only one of them can be found as a minimizer under the corresponding symmetric constraints (in fact by a result of Barutello and Terracini \cite{BT04}, it is nothing but the rotating regular $N$-gon).

Using a similar idea,  in this paper we will look for action minimizers of the spatial $N$-body problem in uniform rotating frames with angular velocity $\om \in [0, N]$ under different combinations of symmetric and topological constraints. As a result, we find many families of simple choreographies (seen in the rotating frame) parameterized by $\om$. Moreover all these families start from one planar linear chain (when $\om=0$) and end at another one (when $\om=N$ and seen from the original non-rotating frame). 

In particular for certain choices of topological and symmetric constraints, combining our results with those from \cite{BT04} and \cite{CF09}, we will show when $\om$ is contained in some sub-interval of $[0, N]$, then the corresponding action minimizers must be a rotating regular $N$-gon entirely contained in the $xy$-plane. These families will include the Marchal's $P_{12}$ family, when $N=3$, as well as the families found numerically by F\'ejoz, when $N=5$, as we showed in Figure \ref{fig:5body-p12} and \ref{fig:5body-4loops}. Potentially they could also coincide with the vertical Lyapunov families discovered in \cite{CF09}. A rigorous proof of this will be interesting, but seems difficult to obtain.

To achieve our result, there are two main difficulties. First as usual is to show the action minimizers are collision-free. As it is well-known, the problem of ruling out collision is much more difficult when topological constraints are involved. In \cite{Y15c}, for the planar $N$-body problem, the problem was solved by imposing an additional monotone constraints (along a fixed direction) besides the symmetric and topological constraints. To make this idea work, at the beginning one needs to show a minimizer is not degenerate along the direction associated with the monotone constraints. This is not hard to do for the planar problem, as in this case the degenerate minimizer must be collinear and the symmetric and topological constraints ensures the existence of at least one isolated collision. Then one can reach a contradiction by some local deformation argument. However for the spatial problem, even in the degenerate case the masses are still allowed to move inside a two dimension plane, which gives them enough freedom to satisfy the symmetric and topological constraints and avoiding any collision. 

Second, we need to show for $\om=0$ or $N$, the corresponding action minimizer is actually planar (in the original non-rotating frame), i.e. it is contained in a two dimensional linear subspace of $\rr^3$. As we mentioned earlier, even for the $P_{12}$ family we are not sure it actually starts from the planar Figure-Eight. In general it is an interesting but difficult task to determine whether an action minimizer will spread out to the maximal possible dimensions, or only be contained inside a subspace with fewer dimension. As one can see by going to extra dimensions, we decrease the potential function, but increase the kinetic energy. However it is not so clear, which one will be the dominating term (see \cite{Ch07} by Chen).

It turns out the above two difficulties can more or less be resolved simultaneously. The key is to show that for an action minimizer under the particular symmetric constraints we are considering, it must satisfy certain monotone property (see Lemma \ref{lm: monotone fixed}). Consequently our result will show the monotone constraints introduced in \cite{Y15c} is in fact unnecessary. 

Unfortunately we are not able to prove the action minimizers is always collision-free (we suspect such a result does not always hold), except when $\om =0$ or $N$. However we are able to prove any possible collision must be an isolated binary collision (although there may be more than one binary collision at a collision moment) and the collision singularities are $C^0$ block-regularizable (see Definition \ref{dfn:block reg 1}, \ref{dfn:block reg 2} and \ref{dfn:block reg 3}). Moreover such a collision solution must be entirely contained in the $xy$-plane.

Our paper is organized as follows: in Section \ref{sec: main results}, the precise statements of our results will be given;  in Section \ref{sec: lemmas}, several technical lemmas that will be useful in ruling out collision will be introduced; in Section \ref{sec: fixed}, the family of planar linear chains will be proven as action minimizers of the spatial $N$-body problem (in the non-rotating frame) under certain symmetric and topological constraints; in Section \ref{sec: coercive}, we will study the existence of the action minimizers in uniform rotating frame with different angular velocities under certain symmetric and topological constraints; in Section \ref{sec: rotate}, the problem of collision regarding the action minimizers found in Section \ref{sec: coercive} will be investigated and it will be shown a minimizer is either collision-free or the collision singularities are $C^0$ block-regularizable.  

\section{the main results}  \label{sec: main results}
Let $q_i =(x_i, y_i, z_i) \in \rr^3$ represent the position of a point mass $m_i$, $i \in \N := \{0, \dots, N-1 \}$ and $q=(q_i)_{i \in \N} \in \rr^{3N}$. Without loss of generality, assume $m_i = 1$, $\forall i \in \N$. Under Newton's law of universal gravity, the motions of the masses satisfy the following equation
\begin{equation} \label{eq:nbody}
\ddot{q}_i = \frac{\partial}{\partial q_i} U(q) = -\sum_{j \in \N \setminus \{i\}} \frac{q_i-q_j}{|q_i -q_j|^3}, \quad \forall i \in \N,
\end{equation}
where $U(q)$ is the potential function, the negative potential energy, defined as below 
\begin{equation}
\label{eq: potential fun} U(q) = \sum_{0 \le i < j \le N-1 } \frac{1}{|q_i -q_j|}.
\end{equation}
Equation \eqref{eq:nbody} is the Euler-Lagrange equation of the action functional 
\begin{equation}
\label{eq:action} \A(q; T_1, T_2) = \int_{T_1}^{T_2} L(q(t), \qd(t)) \, dt, \;\; q \in H^1([T_1, T_2], \rr^{3N}),
\end{equation}
where $H^1([T_1, T_2], \rr^{3N})$ is the space of all Sobolev paths defined on the time interval $[T_1, T_2]$ and $L(q, \qd)$ is the Lagrangian
$$ L(q, \qd) = K(\qd)+U(q), \;\; K(\qd) =\ey \sum_{i \in \N} |\qd_i|^2 . $$
For simplicity, we set $\A(q; T)= \A(q; 0, T)$, for any $T>0$. 

If $q \in H^1([T_1, T_2], \rr^{3N})$ is a collision-free critical point of the action functional, then it is a smooth solution of equation \eqref{eq:nbody}. By \emph{collision-free}, we mean $q(t) \in \rr^{3N} \setminus \Delta$, for any $t \in [T_1, T_2]$, where $\Delta$ is the set of collision configurations
$$ \Delta :=\{q =(q_i)_{i \in \N} \in \rr^{3N}| \; q_{i_1} =q_{i_2}, \text{ for some } i_1 \ne i_2 \in \N \}. $$

We briefly recall the idea of imposing symmetric constraints following the notations from \cite{FT04}. Let $\Lmd= H^1(\rr / 2\pi\zz, \rr^{3N})$ be the space of $2\pi$-periodic Sobolev loops and $\hat{\Lmd}=H^1(\rr / 2\pi\zz, \rr^{3N} \setminus \Delta)$ the subset of collision-free loops. Given an arbitrary finite group $G$ with its action on the loop space $\Lmd$ defined as following
$$ g\big(q(t)\big)= \big(\rho(g)q_{\sg (g^{-1})(0)}, \dots, \rho(g) q_{\sg (g^{-1})(N-1)}\big) \big(\uptau(g^{-1})t \big), \; \forall g \in G, $$
where
\begin{enumerate}
 \item[(a).] $\uptau: G \to O(2)$ represents the action of $G$ on the time circle $\rr/ 2 \pi \zz$;
 \item[(b).] $\rho: G \to O(3)$ represents the action of $G$ on $\rr^3$;
 \item[(c).] $\sg: G \to \mc{S}_{\N}$ represents the action of $G$ on $\N$, where $\mc{S}_{\N}$ is the permutation group of $\N$. 
\end{enumerate}
Set $\Lmd^G = \{ q \in \Lmd|\; g(q(t)) = q(t), \; \forall g \in G \}$ as the space of \emph{$G$-equivariant loops} and $\hat{\Lmd}^G= \Lmd^G \cap \hat{\Lmd}$. As all masses are the same, the action functional $\A$ is invariant under the above group action. By Palais' symmetric principle \cite{Pa79}, a collision-free critical point of $\A$ in $\Lmd^{G}$ is a collision-free critical point of $\A$ in $\Lmd$ as well.

Let `Id' be the identity,  $\mf{R}_{xz}$ the reflection with respect to the $xz$-plane and $\mf{R}_x$ the rotation of $\pi$ around the $x$-axis ($\mf{R}$ with other sub-indices will be defined similarly). We define the symmetric constraints through the action of the dihedral group $ D_N : = \langle g, h |\; g^N= h^2 =1, (gh)^2 =1 \rangle$ by
\begin{equation}
 \label{eq: DN} \begin{cases}
 \uptau(g) t & = t-\frac{2\pi}{N}, \quad \rho(g) = \text{Id}, \quad \sg(g) =  (0,1, \dots, N-1); \\
 \uptau(h) t & = \frac{2\pi}{N}-t, \quad \rho(h) = \R_{xz}, \quad \sg(h) = \prod_{i=0}^{[\frac{N-1}{2}]} (i, N-1-i),
 \end{cases} 
\end{equation}
where $[k]$ represents the largest integer less than or equal to $k$, for any $k \in \rr$. 

The action of $g$ requires all the masses to follow the footstep of $m_0$, i.e.
\begin{equation}
\label{eq: simple choreography solution} q_i(t)= q_0(t + i \frac{2\pi}{N}), \;\; \forall t \in \rr, \; \forall i \in \N. 
\end{equation}
Hence each collision-free critical point of $\A$ in $\ldn$ will be a simple choreography. Meanwhile the action of $h$ implies
\begin{equation}
\label{eq: symm q0} q_0(2\pi-t)= \R_{xz}q_0(t), \;\; \forall t \in \rr. 
\end{equation}
This means the loop $q_0(\rr/ 2\pi \zz)$ is symmetric with respect to the $xz$-plane, and $q_0(t)$ belongs to the $xz$-plane, when $t=0$ or $\pi$. 

With the $D_N$-symmetry defined as above, there is a one-to-one correspondence between loops in $\ldn$ and paths $q \in H^1([0, \pi/N], \rr^{3N})$ satisfying the following conditions
\begin{equation} \label{eq: symmetric boundary moment 1}
\begin{cases}
q_i(0) = \R_{xz} q_{N-i}(0), \; &\forall 1 \le i \le [\frac{N-1}{2}], \\
q_i(\frac{\pi}{N}) = \R_{xz} q_{N-1-i}(\frac{\pi}{N}), \; &\forall 0 \le i \le [\frac{N-1}{2}]-1, \\
q_0(0) = \R_{xz} q_0(0), & \\
q_{[\frac{N-1}{2}]}(\frac{\pi}{N}) = \R_{xz} q_{[\frac{N-1}{2}]}(\frac{\pi}{N}).
\end{cases}
\end{equation}
The time interval $[0, \pi/N]$ will be called a \emph{fundamental domain} of $D_N$-equivariant loops. In the following, we will not distinguish between a loop from $\ldn$ and a path $q \in H^1([0, \pi/N], \rr^{3N})$ satisfying \eqref{eq: symmetric boundary moment 1}.

First we can ask if a global minimizer of $\A$ in $\ldn$ exists and if so, is it collision-free? By a result of Barutello and Terracini \cite{BFT08}, such a global minimizer exists under some additional coercive condition and it is collision-free. However it is nothing but the the rotating regular $N$-gon. As a result, if we want to find interesting and non-trivial solutions in $\ldn$, instead of global minimizers, we need to look for local minimizers. One approach is to impose extra topological constraints to the minimization problem and then look for action minimizers under the same $D_N$-symmetry but different topological constraints. 

Although the loop space $\hat{\Lmd}^{D_N}$ has infinitely many different connected components, one can not expect the action minimizers in each of these connect components to be collision-free, see \cite{Go77}, \cite{Ve01} and \cite{Mo02}, so we need to find the proper topological constraints. For the given $D_N$-symmetry, for any $q \in \hat{\Lmd}^{D_N}$, \eqref{eq: symmetric boundary moment 1} implies
\begin{equation} \label{eq: symmetric boundary moment 2}
\begin{cases}
y_i(0)=-y_{N-i}(0) \ne 0, \; & \forall 1 \le i \le [\frac{N-1}{2}], \\
y_i(\frac{\pi}{N}) = -y_{N-1-i}(\frac{\pi}{N}) \ne 0, \; & \forall 0 \le i \le [\frac{N-1}{2}]-1.  
\end{cases}
\end{equation}
This is equivalent to 
$$ y_0(\frac{k\pi}{N}) \ne 0, \; \forall 1 \le k \le N-1. $$
As a result, for any $\xi \in \Xi_N$, where 
\begin{equation}
\label{eq: Xi} \Xi_N:=\{\xi= (\xi_i)_{i=1}^{N-1}| \, \xi_i \in \{\pm 1\}, \; \forall i \},
\end{equation}
we can define
\begin{equation}
\hld_{\xi}: = \{q \in \hld |\; y_0(i/2) = \xi_i |y_0(i/2)|, \; \forall 1 \le i \le N-1\}. 
\end{equation}
Obviously if $q \in \hat{\Lmd}^{D_N}_\xi$ and $\qt \in \hat{\Lmd}^{D_N}_{\tilde{\xi}}$ with $\xi \ne \tilde{\xi}$, then there does not exist a continuous path of collision-free loops in $\hat{\Lmd}^{D_N}$ goes from $q$ to $\qt$. As $\hld_{\xi}$ is not closed, we will consider its weak closure in $\ldn$ with respect to the $H^1$ norm, which will be denoted by  $\ldn_\xi$. We say $q \in \ldn$ satisfies the \emph{$\xi$-topological constraints}, if $q \in \ldn_{\xi}$.  

We would like to show the action functional $\A$ has at least one minimizer in $\ldn_{\xi}$. For this it needs to be coercive in $\ldn_{\xi}$. Immediately we notice this is not true, as $\ldn_{\xi}$ is invariant under any linear translation along the $x$ or $z$-axis. One can solve this problem by fixing the center of mass at the origin (see \cite{FT04}). However for technical reason, we try to avoid such a strong assumption. Instead the following weaker conditions related to $q_0=(x_0, y_0, z_0) \in H^1(\rr / 2\pi \zz, \rr^3)$ will be required  
\begin{equation}
\label{eq: coercive x} [x_0] := \frac{1}{2\pi} \int_0^{2\pi} x_0(t) \,dt=0, 
\end{equation}
\begin{equation}
\label{eq: coercive y} [y_0]:= \frac{1}{2\pi} \int_0^{2\pi} y_0(t) \,dt=0,
\end{equation}
\begin{equation}
\label{eq: coercive z} [z_0] := \frac{1}{2\pi} \int_0^{2\pi} z_0(t) \,dt=0. 
\end{equation}
Notice that by \eqref{eq: symm q0}, \eqref{eq: coercive y} always holds for any $q \in \ldn$. Although the above conditions are given for the path of $m_0$, by \eqref{eq: simple choreography solution}, once they hold for the path of $m_0$, they will hold for the paths of other $m_i$'s as well. Now we will state our first result.

\begin{thm}
\label{thm: linear chain} For each $\xi \in \Xi_N$, the action functional $\A$ has at least one minimizer among all loops in $\ldn_{\xi}$ satisfying \eqref{eq: coercive x} and \eqref{eq: coercive z}, and each action minimizer $q$ is a collision-free simple choreography of \eqref{eq:nbody} satisfying the following properties.
\begin{enumerate}
\item[(a).] Either $\xd_0(t) \equiv 0$, $ \forall t \in \rr/ 2\pi \zz$, or $\xd_0(t)=0$, if and only if $t \in \{0, \pi\}$ and 
$$  \begin{cases}
\xd_0(t) >0 \; (\text{resp.} <0), & \; \text{ if } t \in (0, \pi), \\
\xd_0(t) <0 \; (\text{resp.} >0), & \; \text{ if } t \in (\pi, 2\pi). 
\end{cases} $$
\item[(b).] Either $\zd_0(t) \equiv 0, \forall t \in \rr/ 2\pi \zz$, or $\zd_0(t)=0$, if and only if $t \in \{0, \pi\}$ and 
$$  \begin{cases}
\zd_0(t) >0 \; (\text{resp.} <0), & \; \text{ if } t \in (0, \pi), \\
\zd_0(t) <0 \; (\text{resp.} >0), & \; \text{ if } t \in (\pi, 2\pi). 
\end{cases} $$
\item[(c).] $q_0(\rr/2\pi \zz)$ belongs to a fixed two dimensional plane, which is symmetric with respect to the $xz$-plane.
\end{enumerate}
\end{thm}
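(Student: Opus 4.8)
The plan is to prove the four assertions in turn — existence of a minimizer, the monotonicity (a)--(b), the planarity (c), and the absence of collisions — of which (a)--(b) is the crux. For existence I would use the direct method. For $q\in\ldn$ the choreography relation \eqref{eq: simple choreography solution} gives $\sum_{i\in\N}\int_0^{2\pi}|\qd_i|^2\,dt=N\int_0^{2\pi}|\qd_0|^2\,dt$, and since \eqref{eq: coercive x}--\eqref{eq: coercive z} say precisely that $q_0$ has zero mean on $\rr/2\pi\zz$ (\eqref{eq: coercive y} being automatic from \eqref{eq: symm q0}), the Poincar\'e--Wirtinger inequality bounds $\|q_0\|_{L^2}$ by $\|\qd_0\|_{L^2}$; as $U>0$, this makes $\A$ coercive on the loops of $\ldn_\xi$ satisfying \eqref{eq: coercive x} and \eqref{eq: coercive z}. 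That set is weakly closed (it is $\ldn_\xi$, the weak $H^1$-closure of $\hld_\xi$, intersected with the subspace defined by the continuous linear conditions \eqref{eq: coercive x}, \eqref{eq: coercive z}) and $\A$ is weakly lower semicontinuous (the kinetic part is convex in $\qd$, and, using the compact embedding $H^1\hookrightarrow C^0$ and Fatou's lemma, $\int U$ is lower semicontinuous and equal to $+\infty$ on collision loops), so a minimizer $q$ exists. A collision-free minimizer then solves \eqref{eq:nbody} by standard arguments: it lies in the open set $\hld_\xi$, hence is a critical point of $\A|_{\ldn}$ and so, by Palais' principle \cite{Pa79}, of $\A|_{\Lmd}$ — the multipliers of \eqref{eq: coercive x}--\eqref{eq: coercive z} vanishing because $\A$ is invariant under overall translations — and it is a simple choreography by \eqref{eq: simple choreography solution}.

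The monotonicity (a)--(b) I expect to be the main obstacle. By \eqref{eq: symm q0} both $x_0$ and $z_0$ are even about $t=0$ and about $t=\pi$, so it suffices to exclude, when the component is non-constant, an interior critical point of $x_0$ (resp. $z_0$) on $(0,\pi)$. The idea is a contradiction argument through a reflection/cut-and-paste construction acting on the $x$-component alone (resp. the $z$-component alone): from an interior critical point one builds a $D_N$-equivariant competitor whose action does not exceed $\A(q)$ but which is not a classical solution (it acquires a corner), contradicting minimality in view of the Euler--Lagrange equation on the collision-free part of the period. What makes this admissible is that modifying a single coordinate respects the $D_N$-symmetry \eqref{eq: DN} and, decisively, leaves every sign $\mathrm{sgn}\,y_0(i/2)$ untouched, so the competitor remains in $\ldn_\xi$; and if the construction destroys \eqref{eq: coercive x} (resp. \eqref{eq: coercive z}) it is restored by an overall spatial translation, under which $\A$ is invariant. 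This is the content of Lemma \ref{lm: monotone fixed}; in particular it shows that the monotone constraint imposed by hand in \cite{Y15c} is redundant.

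Property (c) I would derive from (a)--(b) by exploiting a symmetry not yet used: conjugating the $D_N$-action by a rotation $R$ about the $y$-axis fixes \eqref{eq: DN} (since $R$ commutes with $\R_{xz}$ and with the identity), preserves $\A$ (it is an isometry of $\rr^3$), preserves $\ldn_\xi$ (it fixes the $y$-coordinate), and preserves the linear conditions \eqref{eq: coercive x}--\eqref{eq: coercive z}; hence the loop obtained by applying $R$ to every $q_i$ is again a minimizer of the same problem, and (a)--(b) apply to it. If $x_0(0)\ne x_0(\pi)$, choose $R$ so that the new third coordinate $z_0^R$ satisfies $z_0^R(0)=z_0^R(\pi)$; then (b) forces $z_0^R$ to be constant, since a non-constant $z_0^R$ would be strictly monotone on $[0,\pi]$, so $q_0$ lies in a plane $\{z^R=\mathrm{const}\}$, which contains the $y$-direction and is therefore symmetric about the $xz$-plane. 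If $x_0(0)=x_0(\pi)$, then by (a) $x_0$ is constant and $q_0$ already lies in $\{x=\mathrm{const}\}$; this exhausts the cases.

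Finally, to rule out collisions: knowing $q$ is planar and monotone, the problem collapses onto the planar $N$-body problem, and $q$ is a minimizer among planar $D_N$-equivariant loops in $\ldn_\xi$ having the monotonicity just established — essentially the minimization problem of \cite{Y15c}, except that the monotone property is now a conclusion rather than a hypothesis and the competitor class is only enlarged — so the collision-exclusion argument of \cite{Y15c} (local deformations near a collision that stay within the symmetric and topological class) carries over. The one delicate point, where the two difficulties genuinely interact, is to check that any collision of $q$ can occur only at $t\in\{0,\pi\}$, i.e. on the $xz$-plane, so that the reflection used in the monotonicity step is never obstructed by a collision lying off that plane; here one combines the $D_N$-symmetry with the monotone structure to see that off the $xz$-plane the colliding masses can be pushed apart at no action cost.
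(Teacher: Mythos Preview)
Your outline has the right skeleton (existence via Poincar\'e--Wirtinger, monotonicity, planarity via a rotation about the $y$-axis, then collision exclusion), and your arguments for existence and for (c) are essentially the paper's. But the crux---the monotonicity step---is where your sketch diverges from the paper and where there is a genuine gap.

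You describe the proof of Lemma~\ref{lm: monotone fixed} as a ``reflection/cut-and-paste'' on a single coordinate that creates a corner. That is not what the lemma does, and I do not see how a reflection on $x_0$ (or $z_0$) alone would produce a competitor with action $\le \A(q)$: reflecting $x_0$ in time changes the pairing with the unchanged $y_0,z_0$, so the potential is altered uncontrollably; reflecting in space changes the differences $x_i-x_j$ (which are time-shifts of $x_0$) in a way that need not increase mutual distances. The paper's actual construction is a monotone rearrangement: one replaces each $z_i$ on the fundamental domain by $\tilde z_i(t)=\pm\int_0^t|\dot z_i|\,ds+C_i$, with signs and constants chosen inductively (via the permutation $\sigma$ in \eqref{eq: sg}) so that the boundary matchings \eqref{eq: symmetric boundary moment 1} are preserved. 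The kinetic energy is unchanged since $|\dot{\tilde z}_i|=|\dot z_i|$, and a telescoping inequality shows $|\tilde z_{\sigma(i)}(t)-\tilde z_{\sigma(k)}(t)|\ge |z_{\sigma(i)}(t)-z_{\sigma(k)}(t)|$ for all $i<k$, hence $U(\tilde q)\le U(q)$ with equality iff $z_0$ was already monotone. This works at the $H^1$ level, so it does not presuppose that $q$ is collision-free---avoiding the circularity in your ``corner contradicts Euler--Lagrange'' step. Only \emph{after} this weak monotonicity is in hand does the paper upgrade to strict monotonicity and rule out collisions (Lemma~\ref{lm: strict monotone fixed}, proved as the $\omega=0$ case of Lemma~\ref{lm: strict monotone rotate}): Marchal--Chenciner gives collision-freeness on the interior of the fundamental domain, a local shift deformation kills any interior zero of $\dot z_i$, and boundary binary collisions are excluded by Lemma~\ref{lm:dfm1} when $\theta_j^+\ne \pm\pi/2$ and by the $z$-separation Lemma~\ref{lm:dfm2} when $\theta_j^+=\pm\pi/2$.

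Two smaller points. First, you do not treat the degenerate case where \emph{both} $x_0$ and $z_0$ are constant; the paper excludes this separately (the motion would be collinear along a line parallel to the $y$-axis, the symmetric/topological constraints force an isolated collision, and Lemma~\ref{lm:dfm4} then lowers the action). Second, for collision exclusion you propose to reduce to the planar problem and invoke \cite{Y15c}; the paper instead stays in $\rr^3$ and uses the strict $z$-monotonicity to get $z$-separation, which feeds directly into Lemma~\ref{lm:dfm2}. Your reduction could be made to work, but it is not the route taken here.
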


\begin{rem} \label{rem: strict monotone}
\begin{enumerate}
\item Notice that $\Lmd_{\xi}^{D_N}$ is invariant under an arbitrary rotation with respect to the $y$-axis, so each action minimizer in $\Lmd_{\xi}^{D_N}$ generates an entirely family of action minimizers that are identical to each other after a rotation with respect to the $y$-axis. 
\item Property (b) in the above theorem shows that if the simple choreography is not contained in a plane parallel to the $xy$-plane, then it must satisfy certain \textbf{strict monotone property} along the $z$-axis. In particular if $\zd_0(t) >0$, when $t \in (0, \pi)$, then the $D_N$-symmetry implies:  
\begin{equation}
\label{eq: symmetric monotone even} \text{when } N=2n, \;\; \begin{cases}
\zd_i(t) >0, \; & \forall t \in (0, \frac{\pi}{N}), \; \text{ if } i \in \{0, n\}, \\
\zd_i(t) >0, \; & \forall t \in [0, \frac{\pi}{N}], \; \text{ if } 1 \le i \le n-1, \\
\zd_i(t) <0, \; & \forall t \in [0, \frac{\pi}{N}], \; \text{ if } n+1 \le i \le N-1;
\end{cases}
\end{equation}
\begin{equation}
\label{eq: symmetric monotone odd} \text{when } N=2n+1, \;\; \begin{cases}
\zd_0(t) >0, \; & \forall t \in (0, \frac{\pi}{N}), \\
\zd_n(t)>0, \; & \forall t \in [0, \frac{\pi}{N}), \\
\zd_i(t) >0, \; & \forall t \in [0, \frac{\pi}{N}], \; \text{ if } 1 \le i \le n-1, \\
\zd_i(t) <0, \; & \forall t \in [0, \frac{\pi}{N}], \; \text{ if } n+1 \le i \le N-1.
\end{cases}
\end{equation}
Obviously property (a) implies  similar results along the $x$-axis. 
\end{enumerate} 
\end{rem}

Notice that $\ldn_{\xi}$ is invariant under any rotation with respect to the $y$-axis. Since the action functional is also invariant under these rotations, in fact there is an entirely family of minimizers. In particular one of them is entirely contained in the $xy$-plane, in which case $m_0$ starts from the $x$-axis at $t=0$, then keeps moving forward (or backward) along the direction of $x$-axis until it reaches the $x$-axis and turns around at $t=\pi$. Meanwhile $m_0$ must cross the $x$-axis at some moment $t \in (i \pi/N, (i+1) \pi/N)$, if $\xi_i \ne \xi_{i+1}$, for any $i = 1, \dots, N-2$. As a result, the corresponding solution looks like a sequence of loops placed continuous along the $x$-axis. Therefore it belongs to the family of linear chains (for numerical pictures see \cite{Si00} or \cite{CGMS02}). 

As we mentioned the existence of the family of linear chains has been established by the author in \cite{Y15c}. Nevertheless Theorem \ref{thm: linear chain} generalizes the result in \cite{Y15c} from a couple of aspects:
\begin{enumerate}
\item[(i).] in \cite{Y15c} the planar linear chains are obtained as collision-free minimizers of the planar $N$-body problem, while here they are show to be minimizers of the spatial $N$-body problem, so they are minimizers of a much larger family of loops;
\item[(ii).] in \cite{Y15c}, the linear chains are shown to be collision-free minimizers under additional monotone constraints, so even for the planar $N$-body problem, the above result shows the planar linear chains are collision-free minimizers of a larger family of loops.
\end{enumerate}

Theorem \ref{thm: linear chain} shows the existence of a $D_3$-symmetry Figure-Eight of the three body, when $\xi=(1,-1)$ and a $D_4$-symmetry Super-Eight of the four body, when $\xi=(1,-1,1)$. However as we recall the Figure-Eight of the three body in \cite{CM00} satisfies the $D_6$-symmetry and the Super-Eight of the four body in \cite{Sh14} satisfies the $D_4 \times \zz_2$-symmetry, where $\zz_2= \langle f | f^2 =1 \rangle$. Illuminated by these examples, we define the group $H_N := D_N \times \zz_2$, as the $\zz_2$ extension of $D_N$, with the action of $D_N$ defined as in \eqref{eq: DN} and the action of $\zz_2$ by: 
\begin{align}
 & \text{if } N=2n, \; \uptau(f)t =t, \;  \rho(f) = \R_x, \; \sg(f)= \prod_{i=0}^{n-1} (i, n+i) \label{eq: f even} \\
& \text{if } N=2n+1,  \; \begin{cases}  & \uptau(f)t =\frac{\pi}{N}-t, \; \rho(f) = \R_x, \\ 
&\sg(f)= \Big( \prod_{i=0}^{[\frac{n}{2}]} (i, n-i) \Big) \Big( \prod_{i=0}^{[\frac{n-1}{2}]}(n+1+i, 2n-i) \Big).
\end{cases}  
 \label{eq: f odd} 
\end{align}
Notice that as a group $H_N$ is isomorphic to $D_{2N}$, when $N=2n+1$. 

\begin{rem}
\label{rem: coercive z HN} Recall that if $q \in \ldn$, then $q_0(t)$ satisfies \eqref{eq: symm q0} and the corresponding loop $q_0(\rr/2\pi \zz)$ is invariant under the action of $\R_{xz}$. 

Meanwhile if $q \in \Lmd^{H_N}$, then besides \eqref{eq: symm q0}, $q_0(t)$ must satisfies 
\begin{equation} 
\label{eq: q0 HN symmetry} 
\begin{cases}
q_0(\pi-t)= \R_x q_0(t), & \text{ if } N \text{ is odd}; \\
q_0(t+\pi)= \R_x q_0(t), & \text{ if } N \text{ is even}. 
\end{cases}
\end{equation}
Hence $q_0(t)$ satisfies \eqref{eq: coercive z} and $q(\rr/2\pi \zz)$ is also invariant under the action of $\R_x$. 
\end{rem}

Given an arbitrary $\xi \in \Xi_N$, the $\xi$-topological constraints is not always compatible with the $H_N$-symmetry constraint, i.e., $\Lmd^{H_N}_\xi$ is a non-empty set if and only if 

\begin{equation}
 \label{eq: xi extra sym} |\xi_i - \xi_{N-i}| = \begin{cases}
 2, \;\; &\text{ if } N =2n+1, \\
 0, \;\; &\text{ if } N =2n, 
 \end{cases} \;\;\; \forall i \in \{1, \dots, [\frac{N-1}{2}] \}. 
 \end{equation}

\begin{thm} \label{thm: linear chain extra sym} For each $\xi \in \Xi_N$ satisfying \eqref{eq: xi extra sym}, $\A$ has at least one minimizer among all loops in $\Lmd^{H_N}_{\xi}$ satisfying \eqref{eq: coercive x}, and each minimizer $q$ is a collision-free simple choreography of \eqref{eq:nbody} satisfying all the properties listed in Theorem \ref{thm: linear chain}. Moreover $q_0(\rr/ 2\pi \zz)$ belongs to the $yz$-plane, i.e., $ x_0(t) \equiv 0, \;\; \forall t \in \rr/2\pi \zz. $
\end{thm}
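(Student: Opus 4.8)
The plan is to run the proof of Theorem~\ref{thm: linear chain} with $H_N$ in place of $D_N$, and then to use the extra $\zz_2$-relation \eqref{eq: q0 HN symmetry} to upgrade property (a) to the stronger conclusion $x_0\equiv0$. For \textbf{existence}, note first that by the discussion preceding the statement $\Lmd^{H_N}_{\xi}$ is non-empty exactly under \eqref{eq: xi extra sym}, and it is weakly $H^1$-closed (being, like $\ldn_{\xi}$, defined as a weak closure, and equal to $\ldn_{\xi}\cap\Lmd^{H_N}$). On $\Lmd^{H_N}$ the mean values \eqref{eq: coercive y} (by \eqref{eq: symm q0}) and \eqref{eq: coercive z} (by Remark~\ref{rem: coercive z HN}) vanish automatically, so together with the single imposed normalization \eqref{eq: coercive x} all three barycentric conditions are in force and the Poincar\'e--Wirtinger inequality makes $\A$ coercive on the subset of $\Lmd^{H_N}_{\xi}$ satisfying \eqref{eq: coercive x}. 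Weak lower semicontinuity of $\A$ then yields a minimizer $q$.

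For \textbf{collision-freeness and properties (a), (b)}: since every $H_N$-equivariant loop is in particular $D_N$-equivariant of topological type $\xi$, I would re-use the collision-exclusion arguments and the monotonicity Lemma~\ref{lm: monotone fixed} of Section~\ref{sec: fixed}; the one thing to verify is that each local competitor constructed there can be taken $H_N$-equivariant. This should be routine because the extra generator $f$ acts by the rigid rotation $\R_x$ composed with a translation or reflection of the time circle, so a variation supported near a collision instant (or built from the minimizer's own trajectory) may be replaced by its $\zz_2$-symmetrization at no cost to the action; the finitely many instants fixed by $\uptau(f)$ (for $N$ odd, $t=\pi/(2N)$ and its $D_N$-translates) are handled by the same two-sided variations as in Section~\ref{sec: fixed}. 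Thus $q$ is a collision-free simple choreography, hence a smooth solution of \eqref{eq:nbody} by Palais' principle \cite{Pa79}, and $q_0$ satisfies (a) and (b).

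To pass \textbf{from (a) to $x_0\equiv0$, and to (c)}: by \eqref{eq: q0 HN symmetry} the $H_N$-symmetry yields $x_0(t+\pi)=x_0(t)$ if $N$ is even and $x_0(\pi-t)=x_0(t)$ if $N$ is odd. If $\xd_0\not\equiv0$, property (a) says that, up to a global sign, $\xd_0>0$ on $(0,\pi)$ and $\xd_0<0$ on $(\pi,2\pi)$. When $N$ is even this contradicts the $\pi$-periodicity of $\xd_0$; when $N$ is odd, differentiating $x_0(\pi-t)=x_0(t)$ gives $\xd_0(\pi-t)=-\xd_0(t)$, so $\xd_0$ vanishes and changes sign at $t=\pi/2\in(0,\pi)$, again a contradiction. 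Hence $\xd_0\equiv0$, so $x_0$ is constant and \eqref{eq: coercive x} forces $x_0\equiv0$; then $q_0(\rr/2\pi\zz)$ lies in the $yz$-plane, which is a fixed two-dimensional plane symmetric with respect to the $xz$-plane, so (c) holds as well.

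The \textbf{main obstacle} is the equivariance bookkeeping in the second step: one must check that every deformation used to exclude collisions and to establish the strict monotonicity in Theorem~\ref{thm: linear chain} survives the passage to the larger symmetry group $H_N$. (A tempting shortcut would be to prove $\inf_{\Lmd^{H_N}_{\xi}}\A=\inf_{\ldn_{\xi}}\A$ — using that \eqref{eq: xi extra sym} makes the topological type $\zz_2$-invariant, so $\R_x$ composed with the appropriate time shift sends $\ldn_{\xi}$-minimizers to $\ldn_{\xi}$-minimizers — and then quote Theorem~\ref{thm: linear chain} directly; but assembling two such minimizers into a single $H_N$-symmetric one would require convexity of $U$ along segments, which fails, so the equivariant-deformation route is preferable.)
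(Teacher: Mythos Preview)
Your proposal is correct and follows essentially the same route as the paper: coercivity on $\Lmd^{H_N}_{\xi}$ under \eqref{eq: coercive x}, re-running the Section~\ref{sec: fixed} arguments with $H_N$-equivariant competitors, and then using \eqref{eq: q0 HN symmetry} together with the strict $x$-monotonicity to force $x_0\equiv 0$. The one point where the paper is more explicit concerns your ``main obstacle'': the global competitor $\tilde q$ of Lemma~\ref{lm: monotone fixed} is made $H_N$-equivariant not by any $\zz_2$-symmetrization (which, as you note, would need convexity of $U$), but simply by post-composing with an action-preserving rotation about the $y$-axis and a translation along the $z$-axis so that \eqref{eq: q0 HN symmetry} is restored; and the final contradiction is phrased uniformly as $x_0(0)=x_0(\pi)$ versus strict monotonicity on $(0,\pi)$, which covers both parities at once.
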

\begin{rem}
\begin{enumerate}
\item As we mentioned in Remark \ref{rem: strict monotone}, a minimizer in $\Lmd^{D_N}_{\xi}$ generates an entire family of minimizers by rotating it with respect to the $y$-axis by an arbitrary angle. Meanwhile by the above Theorem a minimizer in $\Lmd^{H_N}_{\xi}$ must belong to the $yz$-plane, as if we rotate it with respect to the $y$-axis, it will not belong to $\Lmd^{H_N}_{\xi}$ anymore, except the rotating angle is $k\pi$ with $k \in \zz$. 
\item Although numerical results suggest the action minimizers obtained in Theorem \ref{thm: linear chain} under the $D_N$-symmetry should satisfy the $H_N$-symmetry as well (when $\xi$ satisfies \eqref{eq: xi extra sym}), no proof is available at this moment.
\item For $N=3$, the $H_3(\cong D_6)$-symmetry constraint is the same as the one used by Marchal in discovering the $P_{12}$ family (see \cite{Mc00} and \cite{CFM05}), so the above theorem shows, when $\om=0$, the corresponding minimizer in the $P_{12}$ family is entirely contained in the $yz$-plane and precisely the Figure-Eight solution of Chenciner and Montgomery. 

\end{enumerate}
\end{rem}

Now let's consider our problem in a coordinate frame rotating around the $z$-axis at a constant angular velocity $\om \in \rr$. To simplify notation, the $xy$-plane will be identified with the $1$-dim complex plane $\cc$ with $\J= \sqrt{-1}$, so $q_i= (x_i, y_i, z_i) \in \rr^3$ will also be written as 
$$ q_i=(\zt_i, z_i) \in \mathbb{C} \times \rr, \; \text{ where } \; \zt_i= x_i + \J y_i. $$ 

Given an arbitrary path $q \in H^1([T_1, T_2], \rr^{3N})$ in the rotating frame with angular velocity $\om$, the same path in the original non-rotating frame has the expression 
\begin{equation}
\label{eq: q om}  \ej q(t) = (e^{\J \om t} q_i(t))_{i \in \N} := ( (e^{\J \om t}\zt_i(t), z_i(t))_{i \in \N}, \;\; t \in [T_1, T_2].
\end{equation}
For a given angular velocity $\om$, we introduce the following action functional
$$ \ao(q; T_1, T_2):= \int_{T_1}^{T_2} L_{\om}(q,\qd) \,dt; \; \;\;  L_{\om}(q,\qd)= K_{\om}(q, \qd) + U(q)$$
where $U(q)$ is the potential function defined in \eqref{eq: potential fun} and
\begin{equation}
\label{eq: K om} K_{\om}(q, \qd):= \ey \sum_{i \in \N} (|\dot{\zt}_i + \J \om \zt_i|^2 + |\dot{z}_i|^2).
\end{equation}
Then 
$$ \ao(q; T_1, T_2) = \A(\ej q; T_1, T_2), $$
Moreover if $q(t)$ is a collision-free critical point of $\ao$, then it is a solution of 
\begin{equation} \label{eq: nbody rotate}
\begin{cases}
\ddot{\zt_i} & = \om^2 \zt_i - 2 \om \J \zt_i +\partial_{\zt_i} U(q), \\
\ddot{z}_i & = \partial_{z_i} U(q), 
\end{cases}
 \quad \quad \forall i \in \N.
\end{equation}
and the correspondingly $\ej q(t)$, is a solution of \eqref{eq:nbody}

For a given $G$-symmetry constraint, recall that a critical point of $\ao$ in $\Lmd^G$ is a critical point of $\ao$ in $\Lmd$, if $\ao$ is invariant under the action of $G$. For this to hold for any $\om$ with $K_{\om}$ defined as in \eqref{eq: K om}, we need the $z$-axis to be a \emph{rotation axis} with respect to the $G$-symmetry constraint, see \cite[Definition 2.11 and Lemma 2.15]{Fe06} for the details. We point out that for the $D_N$-symmetry constraint given before, the $z$-axis is a rotation axis for any $N$, but for the $H_N$-symmetry constraint, this is the case only when $N$ is odd.


Fix an arbitrary $\xi \in \Xi_N$, after the above explanation we may consider the minimization problem of $\ao$ in $\ldn_{\xi}$ for any $\om \in \rr$, and it seems natural to ask the following questions: 
{\em
\begin{enumerate}
\item[(I).] Does there always exist an action minimizer $q^{\om}$ of $\ao$ in $\ldn_{\xi}$ under some proper coercive conditions, for any $\om \in \rr$?
\item[(II).] If such an action minimizer $q^{\om}$ exists, will it be collision-free? If not, could it be regularized in a sense, so that it can still provide us useful information of nearby smooth solutions?
\item[(III).] If such an action minimizer $q^{\om}$ exists, will it be the unique one (after modulo the obvious symmetries)?
\item[(IV).] Will it be possible to establish some kind of continuity of this family $q^{\om}$ with respect to certain parameter, for example $\om$?
\end{enumerate} }

\begin{rem}
We notice that it will be enough to consider the above questions for $\om \in [0, N]$, as for other values of $\om$, the corresponding action minimization problem can be reduced to the previous cases after the following two steps: 

First, for any loop $q(t) \in \Lmd$, it belongs to  $\ldn_{\xi}$ if and only if the corresponding loop $e^{-2kN \J t}q(t)$ belongs to $\ldn_{\xi}$, for any $k \in \zz$. Moreover for any $t \in \rr$, $ U((e^{-2kN \J t}q(t))) = U(q(t))$ and 
$$ K_{\om +2kN}\left(e^{-2kN \J t}q(t), \frac{d(e^{-2kN \J t}q(t))}{dt} \right) = K_{\om}(q(t), \qd(t)).$$ 
This means $q(t)$ is a minimizer of $\ao$ in $\ldn_{\xi}$ if and only if $e^{-2kN \J t}q(t)$ is a minimizer of $\A_{\om + 2kN}$ in $\ldn_{\xi}$. Hence we only need to study the problem for $\om \in [-N, N]$. 

Second, a loop $q(t) \in \Lmd$ belongs to $\ldn_{\xi}$ if and only if the corresponding loop $\bar{q}(t)$ belongs to $\ldn_{-\xi}$, where 
$$ \bar{q}(t)= (\bar{q}_i(t))_{i \in \N} := (x_i(t)-\J y_i(t), z_i(t))_{i \in \N} \; \text{ and } \; -\xi: = (-\xi_i)_{i =1}^{N-1}. $$ 
Moreover for any $t \in \rr$, 
$$ K_{\om}(q(t), \qd(t))= K_{-\om}(\bar{q}(t), \dot{\bar{q}}(t)) \; \text{ and } \; U(q(t)) = U(\bar{q}(t)). $$
Again this implies $q(t) \in \ldn_{\xi}$ is a minimizer of $\ao$ if and only if $\bar{q}(t) \in \ldn_{-\xi}$ is a minimizer of $\A_{-\om}$ in $\ldn_{-\xi}$. Hence it will be enough for us to consider the minimization problem for $\om \in [0, N]$. 
\end{rem}

Regarding question (I), we have the next two theorems.  

\begin{thm}
\label{thm: coercive rotating non-integer} For any  $\om \in [0, N]$ and $\xi \in \Xi_N$, the following properties hold.
\begin{enumerate}
\item[(a).] If $\om \in [0, N] \setminus \zz$, the action functional $\ao$ has at least one minimizer among all loops in $\ldn_{\xi}$ satisfying \eqref{eq: coercive z}.
\item[(b).] If $\om \in \{0, N\}$, the action functional $\ao$ has at least one minimizer among all loops in $\ldn_{\xi}$ satisfying \eqref{eq: coercive x} and \eqref{eq: coercive z}.
\end{enumerate}
\end{thm}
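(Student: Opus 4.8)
The plan is to use the direct method of the calculus of variations: take a minimizing sequence, extract a weakly convergent subsequence using coercivity, and pass to the limit using lower semicontinuity of the action together with the weak closedness of the constraint set $\ldn_\xi$ (this last point is essentially built into the definition of $\ldn_\xi$ as the weak $H^1$-closure of $\hld_\xi$). The heart of the matter is \textbf{coercivity}: I need to show that on the constrained space the action $\ao$ has bounded sublevel sets in the $H^1$ topology (modulo the translational degeneracies, which is exactly why the normalizations \eqref{eq: coercive x}, \eqref{eq: coercive z} are imposed). Once coercivity is in hand, standard arguments give a minimizer, and Palais' symmetric principle together with the earlier discussion (the $z$-axis is a rotation axis for the $D_N$-symmetry, so $\ao$ is $D_N$-invariant) guarantee that a collision-free minimizer solves \eqref{eq: nbody rotate}.

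First I would treat case (a), $\om \in [0,N]\setminus\zz$. Work in the fundamental domain $[0,\pi/N]$ and use the choreography relation \eqref{eq: simple choreography solution} to express everything in terms of $q_0 = (\zt_0, z_0)$. For the $z$-component, $K_\om$ contains the honest kinetic term $\ey\sum|\dot z_i|^2$; combined with the normalization $[z_0]=0$ from \eqref{eq: coercive z} and the Poincar\'e--Wirtinger inequality on the circle, this controls $\|z_0\|_{H^1}$ in terms of the action. For the planar component $\zt_0$, the relevant term is $\ey\sum_i|\dot\zt_i + \J\om\zt_i|^2$; expanding, $\int_0^{2\pi}|\dot\zt_0+\J\om\zt_0|^2\,dt = \int(|\dot\zt_0|^2 + \om^2|\zt_0|^2)\,dt + 2\om\int \langle \J\zt_0,\dot\zt_0\rangle\,dt$, where the last (angular-momentum-type) term is bounded below in absolute value by a multiple of $\|\dot\zt_0\|_{L^2}\|\zt_0\|_{L^2}$ after Cauchy--Schwarz — so the quadratic form in Fourier coefficients is $\sum_k (|k|-|\om|)^2|\hat\zt_0(k)|^2$ type, which is \emph{strictly positive definite on every Fourier mode precisely because $\om\notin\zz$}; hence it dominates $\|\zt_0\|_{H^1}^2$ up to a constant depending on $\mathrm{dist}(\om,\zz)$, with no need for any mean-zero normalization on $\zt_0$. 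This is the essential point distinguishing case (a) from the boundary: when $\om\in\zz$ the mode $k=\pm\om$ is killed by the quadratic form, so one loses control of one Fourier direction and must compensate. (One also checks $U\ge 0$, so discarding it only helps the lower bound; and that the constraint set is nonempty, e.g. by exhibiting an explicit $D_N$-equivariant loop in the right component.)

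For case (b), $\om\in\{0,N\}$: when $\om=0$ the planar quadratic form is just $\ey\sum|\dot\zt_0|^2$, which controls only $\|\dot\zt_0\|_{L^2}$, so one additionally needs $[x_0]=0$ and $[y_0]=0$ to pin down $\zt_0$ — and \eqref{eq: coercive y} holds automatically by the reflection symmetry \eqref{eq: symm q0}, so only \eqref{eq: coercive x} must be added, matching the statement. When $\om=N$ one first applies the reduction in the Remark (multiply by $e^{-2N\J t}$, shifting $\om=N$ to $\om=-N$, then conjugate to $\om=N$... more directly: $\om=N$ is a boundary integer and the mode $k=\mp N$ lies in the kernel of the planar form), so again a mean-zero normalization on the planar component is forced; tracking the reduction shows it is again \eqref{eq: coercive x} together with \eqref{eq: coercive z} (the latter surviving from the $z$-part, which is unaffected by the rotation). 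The \textbf{main obstacle} is thus the coercivity estimate in the integer/boundary case: one must verify that the finitely many normalization conditions \eqref{eq: coercive x}, \eqref{eq: coercive z} exactly remove the kernel of the Hessian of the quadratic part of $\ao$, i.e. that the only $H^1$ directions along which $\ao$ fails to grow are the rigid translations along the $x$- and $z$-axes (the $y$-translation being already excluded by the symmetry), so that after the normalization the quadratic part is positive definite and hence coercive; the potential term $U\ge0$ then never interferes with the lower bound, and weak lower semicontinuity of $q\mapsto\int K_\om$ (a nonnegative quadratic functional) plus Fatou for $\int U$ along the minimizing sequence closes the argument.
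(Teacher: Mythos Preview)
Your proposal is correct and follows essentially the same route as the paper. For (a) the paper isolates the Fourier coercivity estimate as Lemma~\ref{lm: coercive non integer om} (the exact symbol is $(\om+k)^2$ rather than your $(|k|-|\om|)^2$, though both are bounded away from zero over $k\in\zz$ precisely when $\om\notin\zz$); for (b) the case $\om=0$ is exactly Theorem~\ref{thm: linear chain}, and $\om=N$ is reduced to it via the explicit bijection $q\mapsto e^{\J Nt}q$ from $\ldn_\xi$ to $\ldn_{\xi^*}$ (with $\xi^*$ as in \eqref{eq: xi star}) intertwining $\A_N$ with $\A_0$, which is the clean way to close the reduction you only sketch.
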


When $k \in [1, N-1] \cap \zz$, $\A_k$ is not coercive among all loops in $\ldn$ even under the condition \eqref{eq: coercive z}, see \cite[Proposition 7]{BT04} or Theorem \ref{thm: coercive rotating integer} in the following. However when $k$ is coprime with $N$, i.e. $\text{gcd}(k, N)=1$, with the additional $\xi$-topological constraints, $\A_k$ may still be coercive among all loops in $\ldn_{\xi}$ satisfying \eqref{eq: coercive z}. To give a precise statement, let us consider the regular $N$-gons entirely contained in the $xy$-plane with the center of mass at the origin. The locations of masses are determined by the rule that starting from the vertex occupied by $m_i$, $m_{i+1}$ shall be placed on the $(k+1)$-th vertex (assuming the one occupied by $m_i$ is the first) along the clockwise direction with $m_0$ lying to the $x$-axis. Figure \ref{fig:N-gon} shows the locations of the masses in the five body problem for different $k$.

For each $k \in [1, N-1] \cap \zz$, up to a renormalization of size, there are precisely two such regular $N$-gon's, one with $m_0$ lying on the negative $x$-axis and the other with $m_0$ on the positive $x$-axis. In the following $\fn_k^{-}$ and $\fn_k^+$ will denote these two regular $N$-gon's with the size such that $e^{\J t} \fn_k^{\pm}$ are solutions of \eqref{eq:nbody}.



\begin{figure}
  \centering
  \includegraphics[scale=0.60]{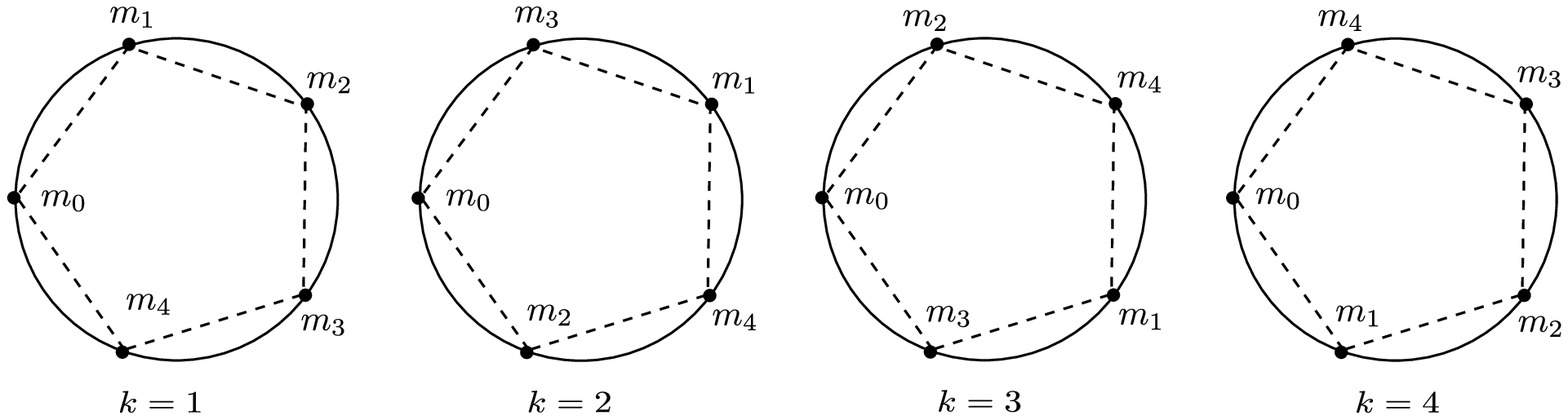}
  \caption{The regular 5-gon's}
  \label{fig:N-gon}
\end{figure}




\begin{rem}
\label{rem: rotating N-gon} We point out that for any $k \in [1, N-1]$ satisfying $\text{gcd}(k, N)=1$ and $\lmd >0$, $\lmd e^{-\J k t}\fn^{\pm}_k$ is a $2\pi$-periodic loop satisfying the $D_N$-symmetry constraint. In fact it also satisfies the $H_N$-symmetry constraint, when $N$ is odd, but not when $N$ is even . As a result, there is a unique $\xi(\fn^{\pm}_k) \in \Xi_N$, such that  $\lmd e^{-\J kt} \fn_k^{\pm} \in \ldn_{\xi(\fn_k^{\pm})}$.
\end{rem}




\begin{thm}
\label{thm: coercive rotating integer} For any $\xi \in \Xi_N$ and $k \in [1, N-1] \cap \zz$ satisfying $\text{gcd}(k, N)=1$, the following properties hold.
\begin{enumerate}
\item[(a).] If $\xi \ne \xi(\fn_k^{\pm})$, $\A_k$ has at least one minimizer among all loops in $\ldn_{\xi}$ satisfying \eqref{eq: coercive z}. 
\item[(b).] If $\xi = \xi(\fn_k^{\pm})$, $\A_k$ does not have a minimizer among all loops in $\ldn_{\xi}$ satisfying \eqref{eq: coercive z}. In particular, $ \inf\{ \A_k (q)| \; q \in \ldn_{\xi(\fn_k^{\pm})} \text{ satisfying } \eqref{eq: coercive z} \}= 0.$

\item[(c).] If $\xi = \xi(\fn_k^{\pm})$, let $\qn \in \ldn_{\xi(\fn_k^{\pm})}$ satisfying \eqref{eq: coercive z} be an arbitrary minimizing sequence of $\A_k$, i.e. $\lim_{n \to \infty} \A_k (\qn; 2\pi)=0$, then $|q^n_i(t)| \to \infty$ uniformly on $t \in \rr$, for any $i \in \N$, and after passing to a subsequence, $\frac{\qn(t)}{|\qn(t)|}$ converges uniformly to $e^{-\J k t} \frac{\fn_{k}^{\pm}}{|\fn_k^{\pm}|}$ on $t \in \rr$. 
\end{enumerate}
\end{thm}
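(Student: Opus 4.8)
The plan is to prove (a) by the direct method in the calculus of variations, with coercivity as the only non-routine ingredient; to prove (b) by exhibiting the explicit family $\lmd e^{-\J kt}\fn_k^\pm$, $\lmd\to\infty$, along which the action tends to $0$; and to prove (c) by localizing an arbitrary minimizing sequence around that family. The common mechanism is the following structural fact. For $q\in\ldn$ set $w_i(t):=e^{\J kt}\zt_i(t)$; since $k\in\zz$ this is again a $2\pi$-periodic $H^1$ map (it is the $xy$-component of $q$ in the inertial frame), and $\int_0^{2\pi}|\dot w_i|^2\,dt=\int_0^{2\pi}|\dot\zt_i+\J k\zt_i|^2\,dt\le 2\A_k(q)$. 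The choreography relation \eqref{eq: simple choreography solution} gives, after shifting the integration variable, $[w_i]=e^{-2\pi\J ki/N}[w_0]$, and the reflection \eqref{eq: symm q0} gives $w_0(2\pi-t)=\overline{w_0(t)}$, hence $[w_0]\in\rr$. Because $\gcd(k,N)=1$, the configuration $([w_i])_i$ is then a \emph{real} multiple of the fixed regular $N$-gon with $i$-th vertex at $e^{-2\pi\J ki/N}$, which by Remark \ref{rem: rotating N-gon} is $\fn_k^+$ if $[w_0]>0$ and $\fn_k^-=-\fn_k^+$ if $[w_0]<0$; that is, the ``constant mode in the rotating frame'' is confined to the one-dimensional ray spanned by $\fn_k^\pm$. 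Together with \eqref{eq: coercive z} (which kills the $z$-translations), the bound on $\A_k(q)$ controls $\|q\|_{H^1}$ modulo this ray, because the fluctuations $w_i-[w_i]$ (by Poincar\'e--Wirtinger) and the mean-zero $z_i$ are controlled by the kinetic energy.

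For (a) we pick a minimizing sequence $q^n\in\ldn_{\xi}$ satisfying \eqref{eq: coercive z}, and let $m$ be the infimum of $\A_k$ over this class; $m<\infty$ since $\hld_{\xi}\ne\emptyset$ (e.g. the collision-free minimizer of Theorem \ref{thm: linear chain}, translated vertically). Suppose $\|q^n\|_{H^1}\to\infty$. By the structural fact $r_n:=|[w_0^n]|\to\infty$, and along a subsequence $q^n/r_n$ converges strongly in $H^1$ — hence uniformly, by the compact embedding $H^1\hookrightarrow C^0$ — to $c\,e^{-\J kt}\fn_k^{+}$ or to $c\,e^{-\J kt}\fn_k^{-}$ for a suitable $c>0$, the alternative being decided by $\lim\operatorname{sgn}[w_0^n]$, since dividing by $r_n$ makes the fluctuations and the $z$-part vanish while the constant mode converges to a unit multiple of the $N$-gon. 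Now $q^n\in\ldn_{\xi}\subseteq\{\,\xi_i y_0(t_i)\ge 0\,\}$ at the finitely many times $t_i$ where the topological constraint is imposed, so dividing by $r_n>0$ and passing to the limit forces $\xi_i$ to agree with the sign of the $y$-coordinate of $m_0$ along the limiting $N$-gon at $t_i$; as these signs are nonzero (Remark \ref{rem: rotating N-gon}) we get $\xi\in\{\xi(\fn_k^+),\xi(\fn_k^-)\}$, contradicting $\xi\ne\xi(\fn_k^\pm)$. Hence $q^n$ is $H^1$-bounded; a weak limit $q$ lies in the weakly closed set $\ldn_{\xi}$, satisfies \eqref{eq: coercive z}, and $\A_k(q)\le\liminf\A_k(q^n)=m$ (the kinetic part is weakly lower semicontinuous, and $\int U$ is lower semicontinuous by Fatou since $q^n\to q$ uniformly), so $q$ is a minimizer.

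For (b), $\lmd e^{-\J kt}\fn_k^\pm\in\ldn_{\xi(\fn_k^\pm)}$ by Remark \ref{rem: rotating N-gon} and satisfies \eqref{eq: coercive z} trivially; since $e^{\J kt}\bigl(\lmd e^{-\J kt}\fn_k^\pm\bigr)=\lmd\fn_k^\pm$ is constant, $\A_k\bigl(\lmd e^{-\J kt}\fn_k^\pm\bigr)=2\pi\,U(\lmd\fn_k^\pm)=\tfrac{2\pi}{\lmd}U(\fn_k^\pm)\to 0$ as $\lmd\to\infty$, so the infimum is $0$; and $\A_k(q)\ge\int_0^{2\pi}U(q)\,dt>0$ for every $q\in\ldn$, so it is not attained. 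For (c), let $\qn\in\ldn_{\xi(\fn_k^\pm)}$ satisfy \eqref{eq: coercive z} with $\A_k(\qn;2\pi)\to 0$; then $\int|\dot w_i^n|^2\to 0$ and $\int|\dot z_i^n|^2\to 0$, so $w_i^n-[w_i^n]\to 0$ and $z_i^n\to 0$ in $H^1$, hence in $C^0$. If $|[w_0^n]|$ stayed bounded along a subsequence, $\qn$ would be $H^1$-bounded and a weak limit would have $\A_k=0$, impossible; so $|[w_0^n]|\to\infty$, whence $|q^n_i(t)|=|[w_0^n]|(1+o(1))\to\infty$ uniformly in $t$, and then $|\qn(t)|=\sqrt N\,|[w_0^n]|(1+o(1))$ uniformly. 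Dividing and passing to a subsequence, $\qn(t)/|\qn(t)|$ converges uniformly to $\pm\,e^{-\J kt}\fn_k^+/|\fn_k^+|$; exactly as in (a) the constraint $\qn\in\ldn_{\xi(\fn_k^\pm)}$ pins the sign to the one for which $\xi(\fn_k^\pm)$ is realized, so the limit is $e^{-\J kt}\fn_k^\pm/|\fn_k^\pm|$, as claimed.

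The main difficulty will be the coercivity step of (a): one must exclude escape of a minimizing sequence in every direction other than the ray spanned by $\fn_k^\pm$ — this is exactly where the choreography and reflection parts of the $D_N$-symmetry enter — and one must verify that the topological constraint survives the rescaling to the limit. For the latter it is essential that $\gcd(k,N)=1$, so that $\fn_k^\pm$ is a genuine collision-free regular $N$-gon whose $y_0$ is nonzero at the marked times and whose type $\xi(\fn_k^\pm)$ can be read off from the rescaled limit. Everything else is a routine application of the direct method together with the homogeneity $U(\lmd q)=\lmd^{-1}U(q)$.
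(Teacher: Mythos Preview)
Your proof is correct and follows the same overall strategy as the paper --- show that any unbounded minimizing sequence must be asymptotically a rotating regular $N$-gon of type $\fn_k^\pm$, so that the $\xi$-constraint forces $\xi=\xi(\fn_k^\pm)$ --- but your execution is organized differently and, in fact, somewhat more transparently. The paper isolates an intermediate lemma (Lemma \ref{lm: no coercive}) which, by direct Cauchy--Schwarz estimates on $\eta^n_0=e^{\J kt}\zt^n_0$, first shows $\max|\eta^n_0|\to\infty$, then $\min|\eta^n_0|\to\infty$, and finally that the normalized direction converges; only afterwards is the full $D_N$-symmetry invoked to pin the limiting configuration to $\fn_k^\pm$. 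You instead compute from the start that the constant Fourier modes satisfy $[w_i]=e^{-2\pi\J ki/N}[w_0]$ with $[w_0]\in\rr$ (using both the choreography and the reflection simultaneously), so the whole escape direction is identified \emph{a priori} as the one-dimensional ray through $\fn_k^\pm$, and Poincar\'e--Wirtinger then controls the fluctuations directly. This buys you a cleaner ``constant mode plus fluctuation'' decomposition and avoids the two-step max/min argument; the paper's approach is slightly more elementary in that it never names the Fourier mode explicitly. In part (b) you correctly send $\lmd\to\infty$ (the paper's printed $\lmd_n\to 0$ would make the potential diverge and is evidently a slip); your argument that $\A_k(q)\ge\int U>0$ for any fixed loop is a quicker way to rule out attainment than the paper's route through $\|q^n\|_{H^1}\to\infty$. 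Part (c) is essentially the same in both versions.
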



With the families of minimizers obtained by the above theorems, the next obviously is to see if they are collision-free.\begin{thm}
\label{thm: linear chain rotate} For any $\om \in [0, N]$ and $\xi \in \Xi_N$, if $\qo$ is an action minimizer of $\ao$ among all loops in $\ldn_{\xi}$ satisfying \eqref{eq: coercive z} (and \eqref{eq: coercive x}, if $\om \in \{0, N\}$), then the following properties hold. 
\begin{enumerate}
\item[(a).] If $\om \in \{0, N\}$, $\qo$ is a collision-free solution of \eqref{eq: nbody rotate}. Moreover when $\om =N$, in the non-rotating frame, the corresponding $e^{\J Nt}\qo$ is a collision-free minimizer of $\A$ among all loops in $\ldn_{\xi^*}$ satisfying \eqref{eq: coercive x} and \eqref{eq: coercive z}, where 
\begin{equation}
 \label{eq: xi star} \xi^*_i= \begin{cases}
 -\xi_i, \; & \text{ if } i \text{ is odd}, \\
 \xi_i, \; & \text{ if } i \text{ is even}.
 \end{cases}
 \end{equation} 

\item[(b).] If $\om \in (0, N)$, then either $\qo$ is a collision-free solution of \eqref{eq: nbody rotate} or the set of collision moments $\Delta^{-1}(\qo) :=\{ t \in \rr: \; \qo(t) \in \Delta \}$ is non-empty. In the latter case, $\Delta^{-1}(\qo) \subset \{ t = \ell \pi/N: \; \ell \in \zz\},$ and $\qo(t)$, $t \in \rr \setminus \Delta^{-1}(\qo)$, is a solution of \eqref{eq: nbody rotate}.


\item[(c).] If $\om \in (0, N)$ and $\Delta^{-1}(\qo) \ne \emptyset$, then $\qo(t)$ belongs to the $xy$-plane, for all $t \in \rr / 2\pi \zz$, and for any $t \in \Delta^{-1}(\qo)$, $\qo(t)$ can only have binary collisions. In particular, when $t =0$ or $\pi/N$, $\qo(t)$ can only have binary collisions between $m_j$ and $m_k$ satisfying
\begin{equation} \label{eq; possible binary collision}
 \{j, k\}= \begin{cases}
 \{i, N-i\}, \text{ for some } i \in \{1, \dots, [\frac{N-1}{2}]\}, \; & \text{ if } t=0, \\
 \{i, N-1-i \}, \text{ for some } i \in \{0, \dots, [\frac{N}{2}]-1\}, \; & \text{ if } t=\pi/N. 
 \end{cases}
\end{equation}
For other collision moments, the possible pairs of binary collisions can be determined through the $D_N$-symmetry. 

\end{enumerate}
\end{thm}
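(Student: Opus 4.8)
The plan is to bootstrap everything from the non‑rotating Theorem~\ref{thm: linear chain}, the monotone property of Lemma~\ref{lm: monotone fixed} (and its counterpart for $\ao$), and the collision‑exclusion machinery of Section~\ref{sec: lemmas} (Sundman--Sperling type asymptotic estimates together with Marchal‑type local deformations). \textbf{Part (a).} For $\om=0$ the functional $\A_0=\A$ and the admissible class are exactly those of Theorem~\ref{thm: linear chain}, so there is nothing to prove. For $\om=N$, since $N\in\zz$ the substitution $q(t)\mapsto e^{\J Nt}q(t)$ maps $2\pi$‑periodic loops to $2\pi$‑periodic loops; using $e^{2\pi N\J}=1$ and the fact that $\R_{xz}$ acts as complex conjugation on the $xy$‑plane (so $\R_{xz}\,e^{\J Nt}=e^{-\J Nt}\,\R_{xz}$), one checks it preserves the relations~\eqref{eq: DN}, hence carries $\ldn$ onto itself; moreover $\A_N(q)=\A(e^{\J Nt}q)$ while $U$ and $[z_0]$ are unchanged. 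Evaluating the $y$‑component at the corner times, where $e^{\J N\cdot k\pi/N}=(-1)^k$, multiplies the sign of $y_0$ at the $k$‑th corner by $(-1)^k$, so $e^{\J Nt}q\in\ldn_{\xi^*}$ with $\xi^*$ as in~\eqref{eq: xi star}. Combining this with the translation invariance of $\A$ to restore the normalizations~\eqref{eq: coercive x} and~\eqref{eq: coercive z} identifies the minimizers of $\A_N$ in $\ldn_\xi$ with those of $\A$ in $\ldn_{\xi^*}$; Theorem~\ref{thm: linear chain} then shows $e^{\J Nt}\qo$, hence $\qo$, is a collision‑free solution, and records the $\xi^*$ class.

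\textbf{Part (b).} Suppose $\Delta^{-1}(\qo)\neq\emptyset$. The asymptotic estimates of Section~\ref{sec: lemmas} make this set discrete, and off it $\qo$ is a minimizer, hence (by Palais' principle, using interior variations of both signs) a solution of~\eqref{eq: nbody rotate}. It remains to rule out collisions at times $t_0\notin\{\ell\pi/N\}$. Passing to the fundamental domain $[0,\pi/N]$, such a collision sits at an \emph{interior} parameter; the $\xi$‑topological constraint and the $D_N$‑boundary conditions~\eqref{eq: symmetric boundary moment 1} depend only on the endpoint values of the fundamental‑domain path, and~\eqref{eq: coercive z} can be restored for free by a translation along the $z$‑axis (under which both $\ao$ and $\ldn_\xi$ are invariant). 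Hence an arbitrary perturbation supported in a small interior neighbourhood of the collision parameter is admissible, and the resulting change of $\ao$ is $2N$ times the local one, so the restriction of $\qo$ to that neighbourhood would be a fixed‑endpoints, otherwise unconstrained minimizer of $\ao$ carrying a collision, contradicting Marchal's theorem (Section~\ref{sec: lemmas}). Therefore $\Delta^{-1}(\qo)\subset\{\ell\pi/N:\ell\in\zz\}$.

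\textbf{Part (c).} Assume still $\Delta^{-1}(\qo)\neq\emptyset$; I would argue in three steps. \emph{(i) Binariness.} At a corner time $\ell\pi/N$ the reflection of $D_N$ fixing it either preserves each colliding cluster or exchanges it with another; if some cluster had $\ge 3$ masses, its blow‑up central configuration could be opened up by a local deformation supported near $\ell\pi/N$, compatible with that reflection and — as it only moves $y_0$ near the corner — with the $\xi$‑constraint once the sign of $y_0$ is chosen correctly, strictly lowering $\ao$; hence every collision is binary. \emph{(ii) Planarity.} By Lemma~\ref{lm: monotone fixed} (and its analogue for $\ao$), either $z_0\equiv0$ and then $\qo$ lies in the $xy$‑plane, or $\zd_0$ is strictly signed on $(0,\pi)$; in the latter case $z_0$ is strictly monotone on $(0,\pi)$ and, by~\eqref{eq: symm q0}, even about $t=\pi$, so $z_0(s)=z_0(s')$ with $s\neq s'$ forces $s'\equiv-s\pmod{2\pi}$. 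Reading this through $z_i(t)=z_0(t+2\pi i/N)$, the only pair that can meet at a corner time $t_0$ is the one swapped by the reflection fixing $t_0$, and the corresponding boundary relation (e.g.\ $q_i(0)=\R_{xz}q_{N-i}(0)$) then forces the collision point to lie on that reflection plane, i.e.\ to have $y=0$ there. But such a symmetric binary collision can be pushed off the reflection plane in the $y$‑direction, symmetrically under the reflection and with the sign dictated by $\xi$, strictly decreasing $\ao$ — a contradiction; hence $z_0\equiv0$. \emph{(iii) Collision pairs.} Now $\qo$ lies in the $xy$‑plane, and a deformation into the $x$‑direction (the singular gain of $U$ near a collision against the $O(\ep^2)$ loss in $K_\om$) excludes the degenerate collinear case $x_0\equiv\mathrm{const}$, so by the monotone property $\xd_0$ is strictly signed on $(0,\pi)$ and $x_0$ is even about $\pi$; repeating the argument of step (ii) with $x_0$ in place of $z_0$ pins the colliding pair at $t=0$ to $\{i,N-i\}$ and at $t=\pi/N$ to $\{i,N-1-i\}$, and the pairs at the other corner times follow by transporting them, via the $D_N$‑symmetry, to $t=0$ or $t=\pi/N$.

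\textbf{Main obstacle.} The engine is the rotating‑frame monotone property (the $\ao$‑analogue of Lemma~\ref{lm: monotone fixed}); once it is in hand, the combinatorics of which masses may meet is immediate. The delicate point is turning ``the collision lies on a symmetry plane'' into an actual contradiction: one needs a single local deformation that simultaneously (a) decreases $\ao$ — a Marchal‑type estimate near a binary collision — (b) respects the reflection of $D_N$ active at the corner time, and (c) stays in the topological class $\ldn_\xi$, which forces one to check there is enough freedom in the transverse direction (the sign of $y_0$) to do all three at once. Excluding $\ge 3$‑body collisions subject to the same constraints is of comparable difficulty, and carrying the $\om=N\leftrightarrow\om=0$ identification through the coercivity normalizations also requires some care.
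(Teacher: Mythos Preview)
Your treatment of parts (a) and (b) matches the paper's. The problems are in part (c).

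\textbf{Step (iii) relies on a false lemma.} You invoke an $x$-monotonicity of the minimizer for $\om\in(0,N)$. This does \emph{not} hold. The rearrangement argument that proves $z$-monotonicity (Lemma~\ref{lm: monotone fixed}/\ref{lm: monotone rotate}) works because replacing $z_i$ by $\int_0^t|\dot z_i|$ leaves $K_\om$ unchanged; but $K_\om$ contains the cross term $|\dot\zt_i+\J\om\zt_i|^2$, which couples $x$ and $y$, so the analogous rearrangement of $x_i$ does not preserve the kinetic integral when $\om\neq 0$ (the paper notes exactly this after Lemma~\ref{lm: monotone rotate}). Without $x$-monotonicity you have no mechanism to pin the colliding pair to $\{i,N-i\}$ at $t=0$.

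\textbf{Step (ii) is incomplete.} Your ``push off the reflection plane in the $y$-direction'' is Lemma~\ref{lm:dfm1}, and that lemma fails precisely when the binary approach direction satisfies $\tht_j^+=\pm\pi/2$ (Remark~\ref{rem: dfm1}); in that degenerate case a $y$-deformation with the sign forced by $\xi$ does not lower the action (Gordon's obstruction). The paper handles this missing case with Lemma~\ref{lm:dfm2}, using the strict $z$-monotonicity to show $q$ is $z$-separated and then deforming along $\e_3$.

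\textbf{What the paper actually does for (c).} The logical order is reversed relative to yours: first prove planarity, then everything else. By Lemma~\ref{lm: strict monotone rotate}, if $z_0$ is non-constant the minimizer is collision-free; contrapositively, a minimizer with a collision has $z_i\equiv 0$. Once $\qo$ lies in the $xy$-plane, the $z$-direction is \emph{free} (unconstrained by either the $D_N$-symmetry at the corners or the $\xi$-topological data), and Lemma~\ref{lm:dfm4} lets you lift selected masses out of the plane near the collision moment to strictly lower $\ao$. Choosing $\tau$ appropriately excludes $m_0$ and $m_{[N/2]}$ from any collision cluster, excludes clusters of size $\ge 3$, and forces the remaining binary pair to be $\{i,N-i\}$ (resp.\ $\{i,N-1-i\}$) --- all without any appeal to $x$-monotonicity. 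Your step (i) (binariness before planarity) is therefore also unnecessary and, as written, too vague to verify against the topological constraint.
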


\begin{rem}
By Theorem \ref{thm: linear chain}, when $\om=0$, the minimizer $\qo(t)$ must be a planar linear chain contained in a plane symmetric with respect to the $xz$-plane, and when $\om=N$, so is $e^{\J Nt}\qo(t)$ in the original non-rotating frame, although most likely the corresponding $\qo(t)$ in the rotating frame is not contained in any fixed two dimensional plane.
\end{rem}

Here we are not able to show $\qo$ is always collision-free, when $\om \in (0, N)$. In fact, we think a general result like this does not hold and make the following conjecture.
\begin{conj}
There exist $N > 3$, $\om \in (0, N)$ and $\xi \in \Xi_N$, such that if $\qo$ is an action minimizer of $\ao$ among all loops in $\ldn_{\xi}$ satisfying \eqref{eq: coercive z}, then it must contain at least one collision.  
\end{conj}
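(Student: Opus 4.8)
Since this is a conjecture rather than a settled statement, what follows is a strategy. The plan is to reduce everything to the planar $N$-body problem in the rotating frame and then to exhibit one explicit triple $(N,\om,\xi)$, with $N>3$ and $\om$ bounded away from $\{0,N\}$, for which a regularized collision loop beats every collision-free competitor. The key reduction is Theorem \ref{thm: linear chain rotate}(c): if an action minimizer $\qo$ in $\ldn_{\xi}$ carries any collision, it is automatically contained in the $xy$-plane, all its collisions are binary, and the collision moments lie in $\{\ell\pi/N:\ \ell\in\zz\}$. Hence it suffices to compare the two infima
\[
 c_0(\om,\xi):=\inf\bigl\{\ao(q;2\pi):\ q\in\ldn_{\xi}\ \text{collision-free and satisfies \eqref{eq: coercive z}}\bigr\},
\]
\[
 c_\Delta(\om,\xi):=\inf\bigl\{\ao(q;2\pi):\ q\in\ldn_{\xi}\ \text{satisfies \eqref{eq: coercive z}}\bigr\},
\]
and to produce a case in which $c_\Delta<c_0$. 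By Theorems \ref{thm: coercive rotating non-integer} and \ref{thm: coercive rotating integer} the infimum $c_\Delta$ is attained (after possibly also imposing \eqref{eq: coercive x} when $\om$ is integral), and strict inequality then forces every minimizer to meet $\Delta$, which is exactly the assertion of the conjecture.

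For the upper bound on $c_\Delta$ I would construct an explicit planar test loop in $\ldn_{\xi}$: over the fundamental domain $[0,\pi/N]$ let the $m_0$-component follow an arc of a rotating collision--ejection (or homographic) solution that ends in a binary collision at $t=\pi/N$ of one of the pairs allowed by \eqref{eq; possible binary collision}, and extend it to $\rr/2\pi\zz$ by the $D_N$-symmetry. Such a loop is admissible because $\ldn_{\xi}$ is the weak $H^1$-closure of $\hld_{\xi}$, and its action can be estimated above quite explicitly in terms of $\om$ and the (unit) masses, using that near a binary collision the action of an ejection--collision arc is finite and, in a rotating frame, can be made small by letting the colliding pair co-rotate with the frame while the remaining masses stay near a configuration of low potential. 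The most promising regime is $\om$ in a small interval around an integer $k$ coprime to $N$ and $\xi$ chosen adjacent to but different from $\xi(\fn_k^{\pm})$: there the collision-free realizations of $\xi$ still have action bounded below, while, in the spirit of Theorem \ref{thm: coercive rotating integer}(b),(c), test loops that partially mimic the escaping rotating $N$-gon $e^{-\J kt}\fn_k^{\pm}$ carry very little action, so a collision loop stitched out of such pieces becomes cheap. The role of $N>3$ is that for $N=3$ there is only one nontrivial sign pattern and Chenciner's regularization argument rules out collisions (this is the $P_{12}$ family), whereas for larger $N$ the richer set $\Xi_N$ forces more ``folding'' and it is plausible that some patterns cannot be realized collision-free at low action.

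I expect the genuine obstacle to be the lower bound on $c_0$: turning the heuristic ``this topological type forces a large detour'' into a quantitative inequality that holds uniformly over \emph{all} collision-free loops of class $\xi$ is precisely the kind of estimate that is notoriously hard once topological constraints are present, and a purely soft variational argument is unlikely to suffice. I would try to obtain it by combining a Gordon/Jacobi-metric lower bound for the relevant arcs with the coercivity estimates behind Theorems \ref{thm: coercive rotating non-integer}--\ref{thm: coercive rotating integer}, or, failing a closed-form bound, by a computer-assisted estimate (rigorous interval arithmetic bounding the action of an explicit collision orbit from above and $c_0$ from below at a well-chosen $\om$). A secondary point that must be checked is that the minimizing sequence realizing $c_\Delta$ does not behave as in Theorem \ref{thm: coercive rotating integer}(c) and run off to infinity; this is guaranteed by the existence theorems as long as $\xi\neq\xi(\fn_k^{\pm})$, but one should verify it survives after the perturbation of $\om$ away from an integer.
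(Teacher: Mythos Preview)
The statement is a \emph{conjecture}, and the paper offers no proof of it; the author explicitly says ``we are not able to show $\qo$ is always collision-free'' and then states the conjecture as something he believes but cannot establish. So there is no paper proof to compare your proposal against. You correctly recognized this and framed your write-up as a strategy rather than a proof, which is the right posture.

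On the strategy itself: the reduction to comparing the two infima $c_\Delta$ and $c_0$ is clean, and you are right that the genuine obstacle is a uniform lower bound on $c_0$. One point in your heuristic looks shaky, though. You propose taking $\om$ near an integer $k$ coprime to $N$ and $\xi$ ``adjacent to but different from $\xi(\fn_k^{\pm})$'', hoping that test loops which partially mimic the escaping rotating $N$-gon carry very little action. But the escape phenomenon of Theorem \ref{thm: coercive rotating integer}(b),(c) is tied specifically to $\xi=\xi(\fn_k^{\pm})$; for any other $\xi$ the functional $\A_k$ is coercive on $\ldn_{\xi}$ (that is the content of part (a)), so there is no mechanism making collision loops in a neighboring class cheap. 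In other words, the ``cheap collision loop'' and the ``coercivity guaranteeing a minimizer'' pull in opposite directions across the choice of $\xi$, and the particular regime you single out does not obviously reconcile them. The computer-assisted route you mention at the end---rigorously bounding the action of an explicit collision orbit from above and $c_0$ from below at a carefully chosen $(N,\om,\xi)$---is probably the most realistic way to settle a single instance of the conjecture.
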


Meanwhile property (c) in the above theorem has the obvious corollary, where the condition may be relatively easy to verify using rigorous numerical method. 
\begin{cor}
For any $\om \in (0, N)$ and $\xi \in \Xi_N$, let $\qo$ be an action minimizer of $\ao$ among all loops in $\ldn_{\xi}$ satisfying \eqref{eq: coercive z}, then if there are two different moment $t_0 \ne t_1$, such that $z_0(t_0) \ne z_1(t_1)$, then $\qo$ must be a collision-free solution of \eqref{eq: nbody rotate}. 
\end{cor}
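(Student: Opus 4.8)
The plan is to obtain this corollary as an immediate consequence of part (c) of Theorem~\ref{thm: linear chain rotate}, argued by contraposition. The assertion to be proved is that the existence of moments $t_0 \neq t_1$ with $z_0(t_0) \neq z_1(t_1)$ forces $\qo$ to be collision-free. Accordingly, I would instead assume that $\qo$ has a collision and show this makes the $z$-components of all masses vanish identically, so that $z_0(t_0) = z_1(t_1) = 0$ for \emph{every} choice of moments, contradicting the hypothesis.

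Concretely, the steps are the following. First, suppose $\qo$ is not collision-free. Since $\om \in (0,N)$, part (b) of Theorem~\ref{thm: linear chain rotate} applies and yields $\Delta^{-1}(\qo) \neq \emptyset$, with $\qo$ solving \eqref{eq: nbody rotate} away from these collision moments. Second, since $\om \in (0,N)$ and now $\Delta^{-1}(\qo) \neq \emptyset$, part (c) of the same theorem applies and gives that $\qo(t)$ lies in the $xy$-plane for all $t \in \rr/2\pi\zz$; in the coordinates $q_i=(\zt_i,z_i)$ this says precisely $z_i(t) \equiv 0$ for every $i \in \N$. In particular $z_0(t_0) = 0 = z_1(t_1)$, which contradicts the assumed inequality $z_0(t_0) \neq z_1(t_1)$. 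Hence $\qo$ is collision-free, and then part (b) shows it is a genuine solution of \eqref{eq: nbody rotate} on the whole circle.

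I do not expect any real obstacle here: all of the work is already contained in Theorem~\ref{thm: linear chain rotate}, and this corollary is merely its contrapositive repackaged in a form amenable to rigorous numerics. The only point requiring a word of care is the reading of ``belongs to the $xy$-plane'' as the statement $z_i \equiv 0$ for all $i$, after which the equalities $z_0(t_0)=z_1(t_1)=0$ are immediate and, in fact, independent of whether $t_0$ and $t_1$ are distinct --- the hypothesis $t_0 \neq t_1$ is relevant only to widen the applicability of the criterion, not to the argument itself. It may also be worth noting in passing that the implication is strictly one-directional: a collision-free minimizer can still lie entirely in the $xy$-plane (for instance the rotating regular $N$-gon of Remark~\ref{rem: rotating N-gon}), so the condition $z_0(t_0)\neq z_1(t_1)$ is sufficient but not necessary for being collision-free.
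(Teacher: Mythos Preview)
Your proposal is correct and is exactly the ``obvious'' contrapositive the paper has in mind: if the minimizer had a collision, part (c) of Theorem~\ref{thm: linear chain rotate} forces $z_i\equiv 0$ for all $i$, contradicting $z_0(t_0)\neq z_1(t_1)$. Your side remarks about the irrelevance of $t_0\neq t_1$ to the logic and the one-directionality of the implication are accurate as well.
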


Facing the possibility of having collision in the corresponding action minimizers, the next thing we can hope is to show they are regularizable in some sense, so that these collision solutions will still provide us useful informations of the nearby dynamics of the $N$-body problem, see the comments by Montgomery in \cite{Mo02}. Results about regularizable collision solutions that are found as action minimizers can be found in \cite{Sh11} and \cite{MVV13}. 

By Theorem \ref{thm: linear chain rotate}, the action minimizers can only have binary collisions at a collision moment. Although there may be just one binary collision or more than one, and in the former case better result can be obtained using Kustaanheimo-Stiefel regularization \cite{KS65}, we will not discuss them separately, as we can not rule out the latter and it contains the former as a special case. By abuse of notation, we will call both cases \emph{simultaneous binary collisions}. The regularization of simultaneous binary collisions has been studied by several authors, see \cite{SL92}, \cite{EB96} and \cite{MS00}.


The result will work in our case is from \cite{EB96}. To explain it, let's consider the $N$-body problem in the non-rotating frame. Recall that associated to the Lagrangian $L(q, \qd)$, we have the Hamiltonian 
$$ H(q,p)=\langle \frac{\partial L}{\partial \qd}, p \rangle -L(q, \qd), \; \text{ where } p= \frac{\partial L}{\partial \qd}(q, \qd)$$
with the corresponding Hamiltonian vector field
\begin{equation}
\label{eq Ham vector field} \qd = \partial_{p}H(q, p); \;\;\; \dot{p} = -\partial_q H(q,p). 
\end{equation}
Since $m_i=1$, for all $i \in \N$, $q(t)$ is a solution of \eqref{eq:nbody} if and only if $(q, \qd)(t)$ is an orbit of \eqref{eq Ham vector field}.

Given a $q^- \in C^2((t_0-2\dl, t_0), \rr^{3N}) \cap C^0((t_0-2\dl, t_0], \rr^{3N})$ (or $q^+ \in C^2((t_0, t_0+2\dl_0), \rr^{3N}) \cap  C^0([t_0, t_0+2\dl_0), \rr^{3N})$), for some $\dl >0$, we say $(q^-, \qd^-)(t)$ (or $(q^+, \qd^+)(t)$), is  \emph{a collision orbit} (or \emph{an ejection orbit}) of \eqref{eq Ham vector field}, if $q^{\pm}(t) \notin \Delta$ and $(q^{\pm},\qd^{\pm})(t)$ satisfies \eqref{eq Ham vector field}, for any $t \ne t_0$, and $q^{\pm}(t_0) \in \Delta$. 


\begin{dfn} \label{dfn:block reg 1} We say a collision orbit $(q^-, \qd^-)(t)$ with $q^-(t_0) \in \Delta$ is $C^k$ \textbf{block-regularizable}, $k \ge 0$, if there is a unique ejection orbit $(q^+, \qd^+)(t)$, such that $q^+(t_0) =q^-(t_0)$, and there are two $(6N-1)$-dim cross sections $\Sigma^{\pm}$ both transversal to the vector field \eqref{eq Ham vector field}, such that $(q^{\pm}, \qd^{\pm})(t \pm \dl) \in \Sigma^{\pm}$ and the map 
$$ \phi: \Sigma^- \setminus \{(q^-, \qd^-)\}  \to \Sigma^+ \setminus \{ (q^+, \qd^+)\} $$ 
induced by the Hamiltonian flow of \eqref{eq Ham vector field} is a $C^k$ diffeomorphism, and moreover by defining $\phi((q^-, \qd^-))= (q^+, \qd^+)$, we can extend $\phi$ to a $C^k$ diffeomorphism from $\Sigma^-$ to $\Sigma^+$. 

We say a collision singularity $q^* \in \Delta$ is $C^k$ \textbf{block-regularizable}, if any collision orbit $(q^-, \qd^-)(t)$ with $q^-(t_0)=q^*$ is $C^k$ block-regularizable. 
\end{dfn}

The following result was proven in \cite[Corollary G]{EB96}. 
\begin{prop}
\label{prop: sbc reg} Any simultaneous binary collisions in the spatial $N$-body problem is $C^0$ block-regularizable. 
\end{prop}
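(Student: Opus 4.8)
The statement is quoted from \cite[Corollary G]{EB96}; here we outline how the argument goes. The plan is to localize near a fixed collision moment $t_0$ at which the collision configuration $q^* \in \Delta$ exhibits $m \ge 1$ pairwise disjoint binary collisions — say the colliding pairs are $\{a_1,b_1\}, \dots, \{a_m,b_m\}$ — and no other coincidences. First I would split the $3N$ configuration coordinates into the $m$ \emph{internal} vectors $w_j := q_{a_j}-q_{b_j} \in \rr^3$ together with the remaining \emph{external} coordinates (the centroids of the colliding pairs and the positions of the non-colliding bodies). For $t$ in a small punctured interval around $t_0$ the external coordinates stay in a compact subset of $\rr^{3N}\setminus\Delta$, so the flow restricted to them is governed by a smooth bounded vector field; the whole singular behaviour is concentrated in the $w_j$'s, each of which undergoes a two-body-type collision $w_j \to 0$ perturbed by the bounded smooth influence of the rest of the bodies.

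Second, I would record the classical asymptotics at $t_0$: for each $j$ one has $|w_j(t)| \sim c_j\,|t-t_0|^{2/3}$, $|\dot w_j(t)| \sim c_j'\,|t-t_0|^{-1/3}$, the unit vector $w_j/|w_j|$ has a limit as $t\to t_0$, and the internal Keplerian energy of the $j$-th pair has a finite limit (the external forces on the pair being bounded near $t_0$). These follow from Sundman-type arguments applied pair by pair, and they guarantee in particular that the collision is isolated in time and that a collision orbit is, for $t<t_0$, uniquely determined by its value at one nearby time.

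Third — and this is the crux — I would regularize each pair by a Levi-Civita substitution (in the spatial case Kustaanheimo--Stiefel) $w_j = \pi(u_j)$ with its own fictitious time $\tau_j$, $dt = |w_j|\,d\tau_j$, so that the $j$-th internal subsystem extends smoothly through $u_j = 0$; physically this is the elastic bounce of the pair. The obstruction to a single global rescaling is that the $m$ fictitious times $\tau_j$ are mutually incommensurate reparametrizations. Following \cite{EB96}, I would instead work with the blown-up flow directly and build an isolating block $B$ around the collision in the extended phase space: orbits entering $B$ either leave through a ``regular'' exit face or are asymptotic to the collision; the asymptotic entrance set $A^-$ and exit set $A^+$ are submanifolds; and, by time-reversibility, each collision orbit through $q^*$ is matched with a unique ejection orbit through $q^*$ carrying the same limiting internal directions, internal energies, and external position and velocity. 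One then picks cross sections $\Sigma^{\pm}$ transverse to the Hamiltonian field \eqref{eq Ham vector field} at the entrance and exit of $B$ and shows that the passage map $\phi : \Sigma^- \setminus\{(q^-,\qd^-)\} \to \Sigma^+ \setminus\{(q^+,\qd^+)\}$ extends continuously — in general no better than $C^0$ — by setting $\phi((q^-,\qd^-)) = (q^+,\qd^+)$. Away from the collision point $\phi$ is a composition of the smooth flow with the smooth Levi-Civita charts; at the collision point continuity requires \emph{uniform} Sundman estimates: an orbit starting $\ep$-close to the collision orbit on $\Sigma^-$ must spend time $o(1)$ as $\ep \to 0$ in a neighbourhood of $q^*$ and exit $o(1)$-close to the ejection orbit on $\Sigma^+$.

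I expect the main obstacle to be precisely this last point: controlling the block map uniformly when several pairs regularize on different time scales. The block $B$ and the sections $\Sigma^{\pm}$ must be adapted simultaneously to all $m$ blown-up coordinate patches, and the uniform estimate relies on the bounded external perturbations not spoiling the pairwise Sundman asymptotics at rates that hold uniformly over a neighbourhood of the collision orbit. Once this is established, $q^*$ is $C^0$ block-regularizable in the sense of Definition~\ref{dfn:block reg 1}, and since $q^*$ was an arbitrary simultaneous binary collision the proposition follows; this is \cite[Corollary G]{EB96}.
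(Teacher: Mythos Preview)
The paper does not prove this proposition at all: it simply quotes it as \cite[Corollary G]{EB96} and moves on. Your outline of ElBialy's argument is a reasonable sketch of how that reference proceeds (separating internal and external coordinates, pairwise Sundman asymptotics, McGehee/KS-type blow-up, isolating block with uniform passage estimates), so you have done more than the paper itself; there is nothing to compare against on the paper's side beyond the bare citation.
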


Notice that in our approach when an action minimizer has a collision moment, the collision and ejection orbits are given a priori. In order to prove the action minimizer is block regularizable, we need to show they form the unique pair of collision and ejection orbits associated to each other. For this, we introduce the following definitions. 

\begin{dfn} \label{dfn:block reg 2}
Given a $q \in C^2((t_0-2\dl, t_0) \cup (t_0, t_0+2\dl_0), \rr^{3N}) \cap C^0((t_0-2\dl, t_0 +2\dl), \rr^{3N})$, we say $(q, \qd)(t)$ is a \emph{collision-ejection orbit} of \eqref{eq Ham vector field}, if $q(t) \notin \Delta$ and $(q, \qd)(t)$ satisfies \eqref{eq Ham vector field}, for any $t \ne t_0$, and $q(t_0) \in \Delta$. We say such a collision-ejection orbit is $C^k$ \textbf{block-regularizable}, $k \ge 0$, if $(q,\qd)(t), t \in (t_0-2\dl, t_0)$ is a $C^k$ block-regularizable collision orbit, and $(q,\qd)(t), t \in (t_0, t_0 +2\dl)$ is the unique ejection orbit associated to it. 
\end{dfn}

\begin{dfn} \label{dfn:block reg 3}
Given a $q \in C^0(\rr, \rr^{3N})$,  we say it is a \textbf{collision solution} of \eqref{eq:nbody}, if $\Delta^{-1}(q)=\{t \in \rr: q(t) \in \Delta \}$ is a non-empty and isolated subset of $\rr$, and $q \in C^2(\rr \setminus \Delta^{-1}(q), \rr^{3N})  q(t)$ satisfies equation \eqref{eq:nbody}. Such a collision solution $q(t)$ is called $C^k$ \textbf{block-regularizable}, $k \ge 0$, if for each $t_0 \in \Delta^{-1}(q)$, there is a $\dl>0$, such that the corresponding collision-ejection orbit $(q,\qd)(t)$, $t \in (t_0-2\dl, t_0+2\dl)$ is $C^k$ block-regularizable. 
\end{dfn}

After the above explanation, we can see even if an action minimizer is just a collision solution, as long as they are  $C^k$ block-regularizable, it still carries useful information for the nearby dynamics of the $N$-body problem. For this reason, we will prove the following result. 

\begin{thm}
\label{thm: regularization} For any $\om \in (0, N)$ and $\xi \in \Xi_N$, let $\qo$ be an action minimizer of $\ao$ among all loops in $\ldn_{\xi}$ satisfying \eqref{eq: coercive z}, if $\qo$ is not collision-free, then in the non-rotating frame, the corresponding $e^{\J \om t}\qo(t)$ is a $C^0$ block-regularizable collision solution of \eqref{eq:nbody}. 
\end{thm}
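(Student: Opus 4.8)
The plan is to combine the structural information about collisions already established in Theorem \ref{thm: linear chain rotate} with the block-regularization result of Proposition \ref{prop: sbc reg}, the missing ingredient being a uniqueness statement for the ejection orbit that is forced by the action-minimizing property. By Theorem \ref{thm: linear chain rotate}(b)--(c), if $\qo$ is not collision-free then $\Delta^{-1}(\qo) \subset \{\ell\pi/N : \ell \in \zz\}$ is a nonempty isolated set, $\qo$ solves \eqref{eq: nbody rotate} off this set, $\qo(t)$ lies in the $xy$-plane for all $t$, and every collision at a collision moment is a simultaneous binary collision. Passing to the non-rotating frame via \eqref{eq: q om}, the loop $e^{\J\om t}\qo(t)$ is therefore a collision solution of \eqref{eq:nbody} in the sense of Definition \ref{dfn:block reg 3}, all of whose collisions are simultaneous binary collisions; and it suffices to work one collision moment $t_0 \in \Delta^{-1}(\qo)$ at a time. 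Fix such a $t_0$ and choose $\dl>0$ small enough that $[t_0-2\dl, t_0+2\dl]\cap \Delta^{-1}(\qo) = \{t_0\}$.

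First I would record what Proposition \ref{prop: sbc reg} gives: the collision singularity $q^* = e^{\J\om t_0}\qo(t_0) \in \Delta$ is $C^0$ block-regularizable, so the collision orbit $(q,\qd)(t)$, $t \in (t_0-2\dl, t_0)$ (with $q = e^{\J\om t}\qo$) is a $C^0$ block-regularizable collision orbit, and likewise the restriction to $(t_0, t_0+2\dl)$ is some ejection orbit emanating from $q^*$. By Definitions \ref{dfn:block reg 2} and \ref{dfn:block reg 3}, what remains is exactly to show that the particular ejection orbit carried by our minimizer is \emph{the} unique ejection orbit associated to the incoming collision orbit in the block-regularization scheme of \cite{EB96}. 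Here I would invoke the variational characterization of the regularized flow: in the block-regularized picture the collision and ejection branches are glued precisely along the unique continuation that is a local action minimizer (equivalently, the regularized Hamiltonian flow is smooth and the image of the incoming branch under the regularized flow is a single outgoing branch). Since $e^{\J\om t}\qo$ is a global minimizer of $\A$ over $\ldn_{\xi^*}$-type constraints — more to the point, it is a \emph{local} minimizer of the fixed-endpoint action on $[t_0-2\dl, t_0+2\dl]$ among paths with the same endpoints and (by the $D_N$-symmetry forcing the colliding pair to be a specific transposition at $t=\ell\pi/N$, cf. \eqref{eq; possible binary collision}) the same collision combinatorics — the incoming and outgoing branches must be the two halves of this locally minimizing collision-ejection arc, which is exactly the pair matched by the regularization. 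Thus the outgoing branch is the unique ejection orbit associated to the incoming one.

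The main obstacle, and the step requiring the most care, is precisely this last point: translating "action minimizer" into "the uniquely regularization-matched pair of branches." Concretely one must show that a locally action-minimizing collision-ejection arc through a simultaneous binary collision is, after the Easton-type block regularization of \cite{EB96}, a single $C^0$ orbit of the regularized flow — i.e. that minimization selects the same gluing as regularization. I would argue this by a local blow-up/McGehee-type analysis at $t_0$: near a simultaneous binary collision the dynamics decouples (to leading order) into independent two-body sub-problems plus a regular remainder; for each colliding pair the Levi-Civita / Kustaanheimo-Stiefel change of variables makes the two-body collision-ejection branch real-analytic through the collision with a well-defined continuation, and the action-minimizing branch is exactly that analytic continuation (any other ejection direction would raise the action, by the standard comparison with the Kepler problem and the fact that minimizers cannot have isolated collisions unless forced, together with the local expansions in \cite{EB96}). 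The regular remainder extends smoothly by ODE theory. Hence the incoming minimizing branch has a unique minimizing continuation, which coincides with the regularized continuation; feeding this back into Definition \ref{dfn:block reg 2} shows the collision-ejection orbit $(q,\qd)(t)$, $t\in(t_0-2\dl,t_0+2\dl)$, is $C^0$ block-regularizable, and since $t_0 \in \Delta^{-1}(\qo)$ was arbitrary, Definition \ref{dfn:block reg 3} yields that $e^{\J\om t}\qo(t)$ is a $C^0$ block-regularizable collision solution of \eqref{eq:nbody}, as claimed.
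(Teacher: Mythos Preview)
Your overall strategy is correct and matches the paper's: reduce to Theorem \ref{thm: linear chain rotate} for the collision structure, invoke Proposition \ref{prop: sbc reg} for block-regularizability of the singularity, and then argue that the minimizer's own ejection branch is the unique one paired with its collision branch. The gap is in this last step, which you leave at the level of a slogan (``minimization selects the same gluing as regularization'') rather than a verification.

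What the paper does here is concrete. From the McGehee blow-up analysis of \cite{EB96}, the ejection orbit paired with a given collision orbit at a simultaneous binary collision is characterized by three matching conditions for each colliding pair $\{m_{i_j}, m_{N-i_j}\}$: the asymptotic polar angle $\phi_j$ and its derivative, the asymptotic azimuthal angle $\tht_j$ and its derivative, and the sub-system energy $E_j$ must all agree as $t \to t_0^-$ and $t \to t_0^+$. The paper then checks each one. Planarity (Theorem \ref{thm: linear chain rotate}(c)) forces $\phi_j \equiv \pi/2$, so that condition is automatic. The $\tht_j$-limits match because if they did not, Lemma \ref{lm:dfm1} would produce a local deformation in $\ldn_{\xi}$ with strictly smaller action, contradicting minimality; the vanishing of $\dot\tht_j$ at the collision is Proposition \ref{prop:angle}. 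Finally, continuity of $E_j$ across $t_0$ comes from the general results on minimizers in \cite[Section 4]{FT04}.

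Your sketch gestures at the $\tht$-matching (``any other ejection direction would raise the action''), but you never isolate the three conditions, you do not use the planarity to dispose of $\phi_j$, and you do not address energy continuity at all. The appeal to LC/KS analytic continuation does not by itself show that the minimizer's outgoing branch \emph{is} that continuation; that is exactly what the three matching conditions establish. Replace the vague ``variational characterization of the regularized flow'' paragraph with explicit verification of these conditions using Lemma \ref{lm:dfm1}, Proposition \ref{prop:angle}, Theorem \ref{thm: linear chain rotate}(c), and the energy-continuity result for minimizers, and the proof will be complete.
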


\begin{rem}
It will be interesting if one can improve the regularity of the block-regularization in the above theorem. Our proof is based on results in \cite{EB96}, where only $C^0$ regularity is obtained. In \cite{MS00}, simultaneous binary collisions are also shown to be $C^k$ block-regularizable, for $k=8/3$, however it only applies to some special cases and does not seem to work here.  
\end{rem}

Like in the case of non-rotating frame, we may consider the corresponding minimization problem under the $H_N$-symmetry constraint, when $N$ is odd, as under the $H_N$-symmetry constraint, $\ao$ is invariant under the action of $H_N$ only when $N$ is odd. 

\begin{thm}
\label{thm: lc rotate extra sym} When $N$ is odd, for any $\om \in [0, N]$ and $\xi \in \Xi_N$ satisfying \eqref{eq: xi extra sym}, if $\qo$ is an action minimizer of $\ao$ among all loops in $\Lmd^{H_N}_{\xi}$ satisfying \eqref{eq: coercive z} (and \eqref{eq: coercive x}, if $\om \in \{0, N\}$), then it satisfies all the properties in Theorem \ref{thm: linear chain rotate} and \ref{thm: regularization}.
\end{thm}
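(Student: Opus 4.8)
The plan is to reduce the $H_N$-symmetric minimization problem to the $D_N$-symmetric one treated in Theorems \ref{thm: linear chain rotate} and \ref{thm: regularization}, exploiting that $D_N$ is a subgroup of $H_N = D_N \times \zz_2$ and that (for $N$ odd) the $z$-axis is a rotation axis for the $H_N$-symmetry constraint, so that Palais' symmetric principle applies and $\ao$ is $H_N$-invariant. First I would observe that $\Lmd^{H_N}_{\xi} \subset \ldn_{\xi}$, so a minimizer $\qo$ of $\ao$ over $\Lmd^{H_N}_{\xi}$ (satisfying the relevant coercivity normalizations, which by Remark \ref{rem: coercive z HN} are automatically compatible with the $H_N$-constraint) is a critical point of $\ao$ restricted to $\Lmd^{H_N}_{\xi}$; by Palais' principle it is in fact a critical point of $\ao$ on the full collision-free loop space, hence away from collisions it solves \eqref{eq: nbody rotate}. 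The key additional point to check is that $\qo$ is not merely a constrained critical point but enjoys the same \emph{minimization} and \emph{local deformation} properties used in the proofs of Theorems \ref{thm: linear chain rotate} and \ref{thm: regularization}: namely, all the blow-up / local perturbation arguments that rule out non-binary collisions, force collision moments to lie in $\{\ell\pi/N\}$, establish the monotone properties (a),(b) of Theorem \ref{thm: linear chain}, and produce the unique collision–ejection pair needed for $C^0$ block-regularization, are carried out by comparing with competitors that respect the symmetry. So the real content is to verify that these local deformations can be performed $H_N$-equivariantly.

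The second step, then, is to go through the arguments of Sections \ref{sec: rotate} (and the relevant lemmas of Section \ref{sec: lemmas}) and check equivariance. The crucial structural fact is \eqref{eq: q0 HN symmetry}: when $N$ is odd, $q_0(\pi - t) = \R_x q_0(t)$, which together with the $D_N$-relation \eqref{eq: symm q0} means the whole loop is determined by its restriction to the fundamental domain $[0,\pi/N]$ exactly as in the $D_N$ case, but with the extra involution acting \emph{within} that domain in a way compatible with the action of $h$. Concretely, the extra generator $f$ has $\uptau(f)t = \pi/N - t$ and $\rho(f)=\R_x$; this is an isometry of the fundamental domain fixing its midpoint, and it commutes appropriately with the reflections defining the boundary conditions \eqref{eq: symmetric boundary moment 1}. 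Hence any local variation supported near an interior point $t_*\in(0,\pi/N)$ of the fundamental domain can be symmetrized over $\langle f\rangle$ (and then extended $D_N$-equivariantly) without increasing the action, because $\rho(f)=\R_x$ is an isometry preserving $K_\om$ (the $z$-axis being a rotation axis) and $U$; variations near $t=0$ or $t=\pi/N$ are handled as in the $D_N$ case since those are boundary moments of the fundamental domain for $H_N$ too. This shows every deformation used to derive the properties in Theorems \ref{thm: linear chain rotate} and \ref{thm: regularization} stays inside $\Lmd^{H_N}_\xi$, so the conclusions transfer verbatim. The statement that collision solutions lie in the $xy$-plane, that collisions are simultaneous binary, and that $e^{\J\om t}\qo$ is $C^0$ block-regularizable then follow by the identical blow-up and Proposition \ref{prop: sbc reg} arguments.

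I expect the main obstacle to be the equivariance bookkeeping for the permutation part $\sg(f)$ of \eqref{eq: f odd}, which pairs masses in a subtler way than $\sg(h)$, combined with checking that the list of ``possible binary collision pairs'' in \eqref{eq; possible binary collision} is genuinely stable under the enlarged symmetry — i.e.\ that symmetrizing a near-collision perturbation does not inadvertently create new collisions elsewhere in the orbit. One must confirm that the $\zz_2$-factor maps the set $\{t=\ell\pi/N\}$ to itself (true since $\uptau(f)$ sends $\pi/N - t$, an integer multiple of $\pi/N$ precisely when $t$ is) and that it sends admissible collision pairs to admissible collision pairs, so that the local deformation constructed at one collision moment can be propagated consistently to its $H_N$-orbit. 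Once this compatibility is in hand, the proof is essentially a transcription of the $D_N$ arguments, and I would present it as such rather than repeating the details: state the inclusion $\Lmd^{H_N}_\xi\subset\ldn_\xi$, invoke Palais' principle and the rotation-axis property, and then remark that each local deformation in Sections \ref{sec: rotate}--\ref{sec: lemmas} is compatible with the additional $\zz_2$-symmetry, so all conclusions of Theorems \ref{thm: linear chain rotate} and \ref{thm: regularization} hold.
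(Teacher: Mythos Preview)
Your proposal is correct and matches the paper's approach: the paper's own proof is a single sentence stating that ``the fact that a minimizer satisfies all the properties in Theorem~\ref{thm: linear chain rotate} and~\ref{thm: regularization} follows from the same arguments as before and will not be repeated,'' and your plan of checking that each deformation from Sections~\ref{sec: lemmas} and~\ref{sec: rotate} can be carried out $H_N$-equivariantly is exactly the content implicit in that sentence. The one nuance worth noting is that, in the analogous non-rotating case (proof of Theorem~\ref{thm: linear chain extra sym}), the paper does not literally symmetrize the competitor over $\langle f\rangle$ but instead constructs the $D_N$-competitor and then adjusts it by a rotation about the $y$-axis and a $z$-translation to land in $\Lmd^{H_N}_\xi$; either tactic works here and yields the same conclusion.
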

\begin{rem}
By Theorem \ref{thm: linear chain extra sym}, when $\om=0$, the minimizer $\qo(t)$ is a planar linear chain contained in the $yz$-plane, and when $\om=N$,  so is $e^{\J Nt}\qo(t)$ in the original non-rotating frame.
\end{rem}

The previous three theorems more or less give us an answer to question (II). Compare to question (I) and (II), question (III) and (IV) are much more difficult. Despite of this, there are some partial results available. 

Recall that for any $k \in [1, N-1] \cap \zz$ satisfying $\text{gcd}(k,N)=1$, $e^{\J t}\fn_k^{\pm}$ is a solution of \eqref{eq:nbody} (with period $2\pi$). Due to the homogeneity of potential, for any $\lmd \in \rr$, $ |\lmd|^{-\frac{2}{3}}e^{\J \lmd t} \fn_k^{\pm}$, is a solution of \eqref{eq:nbody} as well. Then in the rotating frame with angular velocity $\om$,  
$$ \cn_{k, \om}^{\pm}(t):= |\om-k|^{-\frac{2}{3}} e^{-\J k t} \fn_k^{\pm}, \; t \in \rr, $$
is a periodic solution of \eqref{eq: nbody rotate} with minimal period $2\pi/k$. Meanwhile by Remark \ref{rem: rotating N-gon}, there is a unique $\xi(\fn_k^{\pm}) \in \Xi_N$, such that $\cn_{k, \om}^{\pm}(t) \in \ldn_{\xi(\fn^{\pm}_k)}$. Moreover when $N$ is odd, $\cn_{k, \om}^{\pm}(t)$ also belongs to $\Lmd^{H_N}_{\xi(\fn^{\pm}_k)}$. Combining the above theorems and results from Barutello and Terracini \cite{BT04} or Chenciner and F\'ejoz in \cite{CF09}, immediately we have the following corollary.
\begin{cor}
\label{cor: rotate N-gon} For any $k \in [1, N-1] \cap \zz$ satisfying $\text{gcd}(k, N)=1$, there is a constant $\dl(k, N) >0$, such that when $\om \in (k-\dl(k, N), k + \dl(k, N)) \setminus \{k\}$, $\cn_{k, \om}^{\pm}(t)$ is the unique action minimizer of $\ao$ among all loops in $\ldn_{\xi(\fn_k^{\pm})}$ satisfying \eqref{eq: coercive z}, and $\cn_{k, \om}^{\pm}(t)$ depends continuously on $\om$. 

When $N$ is odd, the same result holds under the $H_N$-symmetry constraint.  
\end{cor}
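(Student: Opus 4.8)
The plan is to combine two ingredients: first, a local uniqueness/stability result coming from the spectral analysis of the relative equilibrium $\cn_{k,\om}^{\pm}$, and second, a compactness argument that forces any minimizing sequence to stay near this relative equilibrium for $\om$ close to $k$. Fix $k$ coprime with $N$ and consider the action minimizer $\qo$ of $\ao$ in $\ldn_{\xi(\fn_k^{\pm})}$ produced by Theorem \ref{thm: coercive rotating non-integer}(a) (or by Theorem \ref{thm: linear chain rotate} for collision-freeness); here $\om \in [0,N] \setminus \zz$, so such a minimizer exists and is a genuine solution of \eqref{eq: nbody rotate} up to the collision issue, which we will rule out below. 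The point is that $\cn_{k,\om}^{\pm}(t) = |\om-k|^{-2/3} e^{-\J k t}\fn_k^{\pm}$ is itself an element of $\ldn_{\xi(\fn_k^{\pm})}$ satisfying \eqref{eq: coercive z}, so it is a competitor, and the infimum of $\ao$ is at most $\ao(\cn_{k,\om}^{\pm})$.

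First I would invoke the results of Barutello–Terracini \cite{BT04} and Chenciner–Féjoz \cite{CF09}: at $\om = k$ the loop $e^{-\J k t}\fn_k^{\pm}$ (suitably normalized) is, among loops in $\ldn$ with the coercive normalization, the \emph{global} minimizer of the corresponding limiting functional, and moreover it is nondegenerate in the appropriate sense (the relevant Morse index / linearized spectrum has been computed there). From this nondegeneracy one extracts a local strict-minimality statement: there is a neighborhood (in the $H^1$ topology on $\ldn_{\xi(\fn_k^{\pm})}$, modulo the symmetry group — rotation about the $y$-axis, time translation — acting on the problem) of $\cn_{k,\om}^{\pm}$ on which $\cn_{k,\om}^{\pm}$ is the unique critical point of $\ao$, uniformly for $\om$ in a punctured neighborhood $(k-\dl, k+\dl)\setminus\{k\}$, after the rescaling $q \mapsto |\om-k|^{2/3} q$ that normalizes the size. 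This is where the constant $\dl(k,N)$ is produced, and it is the main obstacle: one must show that the spectral gap at $\om=k$ persists for nearby $\om$ and that the rescaled functionals converge (in a $C^2_{\mathrm{loc}}$ sense away from collisions) to the limiting functional studied in \cite{BT04}, \cite{CF09}, so that the implicit-function/Lyapunov–Schmidt machinery applies. The coercivity statement of Theorem \ref{thm: coercive rotating integer}(b),(c) is exactly what controls the behavior at $\om=k$ itself: any minimizing sequence for $\A_k$ in $\ldn_{\xi(\fn_k^{\pm})}$ escapes to infinity with shape converging to $e^{-\J kt}\fn_k^{\pm}/|\fn_k^{\pm}|$, which after rescaling is precisely the compactness needed to conclude that the rescaled minimizers $|\om-k|^{2/3}\qo$ converge to $\fn_k^{\pm}$ as $\om \to k$.

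Next I would run the following dichotomy for $\om \in (k-\dl, k+\dl)\setminus\{k\}$: let $\qo$ be any action minimizer in $\ldn_{\xi(\fn_k^{\pm})}$ satisfying \eqref{eq: coercive z}, and rescale to $\tilde q^\om := |\om-k|^{2/3}\qo$. By the value comparison $\ao(\qo) \le \ao(\cn_{k,\om}^{\pm})$ together with the convergence of the rescaled functionals, for $\dl$ small enough $\tilde q^\om$ must lie in the $H^1$-neighborhood of $\fn_k^{\pm}$ on which strict local minimality holds; by the uniqueness part of that statement, $\tilde q^\om$ coincides with $|\om-k|^{2/3}\cn_{k,\om}^{\pm}$ up to the allowed symmetries, i.e.\ $\qo = \cn_{k,\om}^{\pm}$. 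Along the way this also shows $\qo$ is collision-free (the regular $N$-gon relative equilibrium has no collisions), consistent with Theorem \ref{thm: linear chain rotate}. Finally, continuity of $\om \mapsto \cn_{k,\om}^{\pm}(t)$ is immediate from its explicit formula $|\om-k|^{-2/3}e^{-\J kt}\fn_k^{\pm}$, which depends continuously (indeed smoothly) on $\om$ on $(k-\dl,k+\dl)\setminus\{k\}$. For the last sentence of the corollary, one observes that when $N$ is odd the $z$-axis is a rotation axis for the $H_N$-symmetry, $\cn_{k,\om}^{\pm} \in \Lmd^{H_N}_{\xi(\fn_k^{\pm})}$ by Remark \ref{rem: rotating N-gon}, and the results of \cite{BT04}, \cite{CF09} apply verbatim under this stronger symmetry constraint (which only shrinks the competitor class and hence cannot destroy minimality of the $N$-gon), so the identical argument goes through.
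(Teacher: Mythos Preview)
Your approach is considerably more elaborate than what the paper has in mind, and it also misreads the content of \cite{BT04}. The paper treats this corollary as ``immediate'' because \cite{BT04} (and \cite{CF09}) already establishes the \emph{global} statement: there is a $\dl(k,N)>0$ such that for every $\om \in (k-\dl, k+\dl)\setminus\{k\}$ the rotating regular $N$-gon $\cn_{k,\om}^{\pm}$ is the unique minimizer of $\ao$ among \emph{all} loops in $\ldn$ (indeed in $\lzn$) satisfying \eqref{eq: coercive z}, with no topological constraint at all. Since $\cn_{k,\om}^{\pm}\in\ldn_{\xi(\fn_k^{\pm})}$ by Remark \ref{rem: rotating N-gon}, and the infimum over the smaller set $\ldn_{\xi(\fn_k^{\pm})}$ is at least the infimum over $\ldn$, any minimizer in $\ldn_{\xi(\fn_k^{\pm})}$ achieves the unconstrained minimum and hence coincides with $\cn_{k,\om}^{\pm}$ by the uniqueness in \cite{BT04}. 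That is the whole argument; the existence of the constrained minimizer is Theorem \ref{thm: coercive rotating non-integer}(a), and continuity in $\om$ is read off the explicit formula.

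By contrast, you invoke \cite{BT04}/\cite{CF09} only for a \emph{nondegeneracy} statement at $\om=k$ and then try to reconstruct global uniqueness for nearby $\om$ via a rescaled-compactness plus local-strict-minimality scheme. This is unnecessary, and it also has a genuine gap: Theorem \ref{thm: coercive rotating integer}(c) concerns minimizing sequences of $\A_k$ at the fixed value $\om=k$, not the behavior of minimizers $\qo$ of $\ao$ as $\om\to k$. To deduce that $|\om-k|^{2/3}\qo$ converges to $\fn_k^{\pm}$ you would first need to show that $\{\qo\}_{\om\to k}$ is a minimizing sequence for $\A_k$, which requires controlling $\A_k(\qo)-\ao(\qo)$ on paths whose size diverges; this is not supplied and is not trivial. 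The paper sidesteps all of this because \cite{BT04} already delivers global uniqueness for each $\om$ in the punctured interval.
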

\begin{rem}
Notice that as $\om$ goes to $k$, the size of the regular $N$-gon goes to infinity. In some sense the corresponding minimizer for $\om =k$ can be seen as an infinite large regular $N$-gon without any motion in the original non-rotating frame. 
\end{rem}

We finish this section with some explanation of the above corollary for the $5$-body problem under the $H_5$-symmetry constraint. When $N=5$ and $k=2$, $\xi(\fn_2^-)=(1, 1, -1, -1)$. The corresponding family of minimizers $\qo(t)$ starting from a figure eight in the $yz$-plane when $\om=0$, as $\om$ increases the two loops in the eight begin to fold and when $\om \in  (2-\dl(2, 5), 2+ \dl(2, 5)) \setminus \{2\}$, $\qo(t) = \cn_{2, \om}^-(t)$, which are rotating 5-gons entirely contained in the $xy$-plane with its size goes to infinity, as $\om$ approaches to $2$. When $\om>2+ \dl(2, 5)$, the masses should not be able to stay on the $xy$-plane all the time. In particular, when $\om =5$, in the original non-rotating frame $e^{\J \om t}\qo(t)$ is a three-loop planar linear chain (a super eight) contained in the $yz$-plane satisfying the $\xi^*= (-1, 1, 1, -1)$-topological constraint. When $N=5$ and $k=4$, $\xi(\fn_4^-)=(1, -1, 1, -1)$. Things are similar as above, except when $\om=0$, $\qo(t)$ is the four-loop linear chain and when $\om =5$, in the original non-rotating frame $e^{\J \om t}\qo(t)$ is a regular rotating $5$-gon (a loop) contained in the $yz$-plane satisfying the $\xi^*=(-1, -1, -1, -1)$-topological constraint.





\section{Technical lemmas} \label{sec: lemmas}

In this section, we collect several deformation lemmas that can be used to decrease the action value of collision paths. In the case of non-rotating frame, these results are proven in a series of papers by the author (\cite{Y15b}, \cite{Y15c} and \cite{Y16s}). Here we generalize them to the case of uniformly rotating frame. Throughout this section, we assume $q(t), t \in [0, T]$ a collision solution of \eqref{eq: nbody rotate}, which is collision-free, for any $t \in (0, T)$, and contains at least one collision at the moment $t=0$. 

Given an $\I \subset \N$, we say $q(t)$ has an $\I$-cluster collision at the moment $t=t_0$, if 
$$ \forall i \in \I, \;\; \begin{cases}
q_i(t_0) = q_j(t_0), \; & \text{ if } j \in \I \setminus \{i\}; \\
q_i(t_0) \ne q_j(t_0), \; & \text{ if } j \in \N \setminus \I. 
\end{cases}
$$ 
Let's assume $q(0)$ has an $\I$-cluster collision. In the first half of the section, we further assume $|\I|=2$ ($|\I|$ represents the cardinality of the set $\I$). Without loss of generality, let's say $\I= \{j, k \}$. Let $q_c(t)= \frac{q_j(t) + q_k(t)}{2}$ be the center of mass of $m_j$ and $m_k$, and
\begin{equation}
 \label{eqn:rel} \mf{q}_i(t) = (\xf_i(t), \yf_i(t), \zf_i(t)) = q_i(t) - q_c(t), \; \forall i \in \I,
\end{equation}
the relative position of $m_i$ with respect to the center of mass of $m_j$ and $m_k$. Introducing the spherical coordinates $(r, \phi, \tht)$ of $\rr^3$  with $r \ge 0, \phi \in [0, \pi]$ and $\tht \in \rr$, then
\begin{equation} \label{eqn:sph co}
\xf_i  = r_i \sin \phi_i \cos \tht_i, \;\; \yf_i = r_i \sin \phi_i \sin \tht_i, \;\; \zf_i = r_i \cos \phi_i, \;\; \forall i \in \I.
\end{equation}
Since $\mf{q}_j(t) +\mf{q}_k(t) =0$ and $m_j=m_k=1$, we have
$$ r_k(t) = r_j(t), \;\; \phi_k(t) = \pi - \phi_j(t), \;\; \tht_k(t) = \pi + \tht_j(t). $$

The following asymptotic properties are crucial in our proofs of the deformation lemmas.    
\begin{prop}
\label{prop:sundman} For any $i \in \I$ and $t>0$ small enough, 
$$ r_i(t) = C_1t^{\frac{2}{3}}+o(t^{\frac{2}{3}}), \;\; \dot{r}_i(t) = C_2 t^{-\frac{1}{3}} +o(t^{-\frac{1}{3}}).$$
\end{prop}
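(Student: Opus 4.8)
The plan is to first remove the rotation and then invoke (and sketch) the classical Sundman--Sperling asymptotics for an isolated binary collision. Since $q(t)$ solves \eqref{eq: nbody rotate} if and only if $e^{\J \om t}q(t)$ solves \eqref{eq:nbody}, and since for each fixed $t$ the map $e^{\J \om t}$ is an isometry of $\rr^3$ fixing the $z$-axis, the mutual distances $|q_a-q_b|$, the collision set $\Dl$, and the radii $r_i=|\mf{q}_i|$ are all unchanged under $q\mapsto e^{\J \om t}q$. Hence it suffices to prove the stated estimates for the non-rotating solution $\hat q:=e^{\J \om t}q$, which has the same collision set, the same $\I$-cluster binary collision at $t=0$, and is collision-free for $t\in(0,T)$.

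Write $\I=\{j,k\}$, $\qt:=\hat q_j-\hat q_k$ and $\rho:=|\qt|=2r_j=2r_k$, so that the claim becomes $\rho(t)=2C_1t^{2/3}+o(t^{2/3})$ and $\dot\rho(t)=2C_2t^{-1/3}+o(t^{-1/3})$. By continuity of $\hat q$ and the $\I$-cluster hypothesis, the remaining masses stay at distance $\ge\dl>0$ from the common limit $\hat q_j(0)=\hat q_k(0)$ for $t$ near $0$; subtracting the equations \eqref{eq:nbody} for $\hat q_j$ and $\hat q_k$ therefore yields a perturbed Kepler equation
$$ \ddot{\qt}=-\frac{2\qt}{\rho^{3}}+F(t), $$
with $F$ bounded near $t=0$. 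Using conservation of the total energy of $\hat q$ on $(0,\ep)$, together with the boundedness near $0$ of $\dot{\hat q}_l$ for $l\notin\I$ and of the velocity of the center of mass $\tfrac12(\hat q_j+\hat q_k)$ (whose singular force cancels), one gets $\tfrac12|\dot{\qt}|^{2}=\tfrac2\rho+O(1)$, so the relative energy $h:=\tfrac12|\dot{\qt}|^{2}-\tfrac2\rho$ is bounded and $|\dot{\qt}|=O(\rho^{-1/2})$ as $t\to0^{+}$. The relative angular momentum $c:=\qt\times\dot{\qt}$ satisfies $\dot c=\qt\times F$, hence has a finite limit $c_0$; the identity $\rho^{2}\big(|\dot{\qt}|^{2}-\dot\rho^{2}\big)=|c|^{2}$ combined with $\dot\rho^{2}\ge0$ and $|\dot{\qt}|^{2}=4/\rho+O(1)$ forces $c_0=0$ (otherwise $\dot\rho^{2}<0$ for small $\rho$).

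To obtain the sharp rates I would use the Lagrange--Jacobi identity for $I=\rho^{2}$, which after simplification reads $\ddot\rho=|c|^{2}\rho^{-3}-2\rho^{-2}+O(1)$. From $|\dot\rho|\le|\dot{\qt}|=O(\rho^{-1/2})$ one gets $\big|\tfrac{d}{dt}\rho^{3/2}\big|=O(1)$ and hence the upper bound $\rho(t)=O(t^{2/3})$; the matching lower bound $\rho(t)\ge c\,t^{2/3}$ is obtained by a bootstrap comparing $\rho$ with the exact radial Keplerian collision orbit $a\,t^{2/3}$ (here $a=9^{1/3}$), using that $c_0=0$ makes the centrifugal term $|c|^{2}\rho^{-3}$ genuinely lower order than $2\rho^{-2}$. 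Once $\rho\asymp t^{2/3}$ is known, the energy relation $\dot\rho^{2}=4/\rho+O(1)-|c|^{2}\rho^{-2}=(4/\rho)(1+o(1))$ yields $\dot\rho=2\rho^{-1/2}(1+o(1))$, which integrates to $\rho=9^{1/3}t^{2/3}(1+o(1))$ and then, via the same relation, gives $\dot\rho=\tfrac23\,9^{1/3}\,t^{-1/3}(1+o(1))$; dividing by $2$ gives the assertions for $r_i=\rho/2$.

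I expect the bootstrap for the lower bound on $\rho$ (ruling out an anomalously fast approach to collision) to be the main obstacle: unlike the boundedness of $h$ and the vanishing of $c_0$, it cannot be read off from soft energy estimates and genuinely requires the perturbed-Kepler structure ($F$ bounded, $c_0=0$). At this point one may, if preferred, simply quote Sperling's theorem (equivalently the Sundman--Sperling estimates, e.g.\ as presented in Siegel--Moser) for the non-rotating $N$-body problem, which the reduction in the first paragraph has made directly applicable; the only genuinely new observation needed is that the rotating-frame centrifugal and Coriolis terms disappear after passing to $\hat q$.
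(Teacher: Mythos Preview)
Your sketch is correct and in fact more detailed than the paper, which does not prove this proposition at all: after the statement it simply says ``This is the well-known Sundman's estimates, for a proof see \cite{FT04}.'' So there is nothing to compare on the level of argument. Your reduction to the non-rotating frame via $\hat q=e^{\J\om t}q$ is the right move and, strictly speaking, fills a small gap the paper leaves implicit (the proposition is stated for solutions of \eqref{eq: nbody rotate}, while the cited reference treats \eqref{eq:nbody}); the paper uses exactly this reduction elsewhere (Lemmas \ref{lm:dfm1}--\ref{lm:dfm4}) but not here. Your honest flagging of the lower-bound bootstrap as the genuine analytic content, with the option to cite Sperling or Siegel--Moser at that point, is appropriate and matches how the paper handles it (by citation).
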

This is the well-known Sundman's estimates, for a proof see \cite{FT04}. 

\begin{prop} \label{prop:angle}
For any $i \in \I$, there exist $\phi^+_i \in [0, \pi]$ and $\tht^+_i \in \rr$ satisfying
\begin{enumerate}
\item[(a).] $\lim_{t \to 0^+} \phi_i(t) = \phi_i^+, \; \lim_{t \to 0^+} \tht_i(t) = \tht_i^+$,
\item[(b).] $\lim_{t \to 0^+} \dot{\phi}_i(t) = \lim_{t \to 0^+} \dot{\tht}_i(t) = 0$,
\item[(c).] $\phi^+_k = \pi - \phi^+_j, \; \tht_k^+ = \pi + \tht^+_j.$
\end{enumerate}
\end{prop}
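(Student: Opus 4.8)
\textbf{Proof proposal for Proposition \ref{prop:angle}.}

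The plan is to exploit that, after subtracting the center of mass $q_c(t)$ of the colliding pair $\{j,k\}$, the relative motion $\mf{q}_j(t) = -\mf{q}_k(t)$ is governed by a perturbed Kepler problem: writing out \eqref{eq: nbody rotate} for $\ddot{q}_j$ and $\ddot{q}_k$, the difference $\ddot{\mf{q}}_j = \ddot q_j - \ddot q_c$ equals $-2\mf{q}_j/|2\mf{q}_j|^3$ (the dominant two-body attraction, since $|\mf{q}_j - \mf{q}_k| = 2|\mf{q}_j| = 2r_j$) plus bounded terms coming from the Coriolis/centrifugal contributions $\om^2 \zt - 2\om\J\zt$ and from the interactions with the other masses $m_\ell$, $\ell \notin \I$, which stay a definite distance away near $t=0$ by the definition of $\I$-cluster collision. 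So near the collision one has $\ddot{\mf{q}}_j = -\tfrac14 \mf{q}_j/r_j^3 + \mathbf{b}(t)$ with $\mathbf{b}$ bounded on $(0,\delta)$.

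First I would establish (a). Using Proposition \ref{prop:sundman} ($r_j(t) = C_1 t^{2/3}(1+o(1))$, $\dot r_j(t) = C_2 t^{-1/3}(1+o(1))$), I want to control the angular variables. The cleanest route is via the angular momentum of the pair: let $\mathbf{C}_j(t) = \mf{q}_j(t) \times \dot{\mf{q}}_j(t)$. Then $\dot{\mathbf{C}}_j = \mf{q}_j \times \ddot{\mf{q}}_j = \mf{q}_j \times \mathbf{b}(t)$, since the radial Kepler term contributes nothing to the cross product. Now $|\mf{q}_j \times \mathbf{b}| \le r_j(t)\,|\mathbf{b}(t)| = O(t^{2/3})$, which is integrable near $0$, so $\mathbf{C}_j(t)$ has a finite limit $\mathbf{C}_j^+$ as $t \to 0^+$. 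On the other hand, in spherical coordinates $|\mathbf{C}_j|^2 = r_j^4(\dot\phi_j^2 + \sin^2\phi_j\,\dot\tht_j^2)$, so $\dot\phi_j^2 + \sin^2\phi_j\,\dot\tht_j^2 \le |\mathbf{C}_j^+|^2 r_j^{-4}(1+o(1)) = O(t^{-8/3})$. This is not yet integrable, so a direct integration of $\dot\phi_j$ does not immediately give convergence; I would instead use energy. The two-body energy $E_j(t) = \tfrac12|\dot{\mf{q}}_j|^2 - \tfrac14 r_j^{-1}$ satisfies $\dot E_j = \dot{\mf{q}}_j \cdot \mathbf{b}(t)$, and $|\dot{\mf{q}}_j| = O(t^{-1/3})$ by Proposition \ref{prop:sundman}, so $\dot E_j = O(t^{-1/3})$ is integrable; hence $E_j(t) \to E_j^+$ finite. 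Then $\tfrac12(\dot r_j^2 + r_j^2(\dot\phi_j^2+\sin^2\phi_j\,\dot\tht_j^2)) = E_j^+ + \tfrac14 r_j^{-1} + o(1)$, and since $\dot r_j^2 = C_2^2 t^{-2/3}(1+o(1))$ already accounts for the leading $\tfrac14 r_j^{-1} = \tfrac14 C_1^{-1} t^{-2/3}(1+o(1))$ divergence (indeed $\tfrac12 C_2^2 = \tfrac14 C_1^{-1}$ from the Sundman relation), the remaining angular kinetic term $\tfrac12 r_j^2(\dot\phi_j^2 + \sin^2\phi_j\,\dot\tht_j^2)$ is $o(t^{-2/3})$, i.e. $\dot\phi_j^2 + \sin^2\phi_j\,\dot\tht_j^2 = o(t^{-8/3})$. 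Combined with $|\mathbf{C}_j^+|^2 r_j^{-4} = |\mathbf{C}_j^+|^2 C_1^{-4} t^{-8/3}(1+o(1))$ being an upper bound for the same quantity, this forces $\mathbf{C}_j^+ = 0$.

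With $\mathbf{C}_j^+ = 0$ in hand, part (b) becomes the main quantitative step: I claim $\dot\phi_j, \dot\tht_j \to 0$. From $r_j^4(\dot\phi_j^2 + \sin^2\phi_j\,\dot\tht_j^2) = |\mathbf{C}_j(t)|^2$ and $|\mathbf{C}_j(t)| = |\mathbf{C}_j(t) - \mathbf{C}_j^+| = |\int_0^t \mf{q}_j\times\mathbf{b}\,ds| = O(t^{5/3})$, we get $\dot\phi_j^2 + \sin^2\phi_j\,\dot\tht_j^2 = O(t^{10/3})\,r_j^{-4} = O(t^{10/3})\,O(t^{-8/3}) = O(t^{2/3}) \to 0$; this immediately gives $\dot\phi_j(t) \to 0$ and $\sin\phi_j\,\dot\tht_j \to 0$. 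For $\dot\phi_j \to 0$ to yield convergence of $\phi_j$ I would note $\dot\phi_j = O(t^{1/3})$ is integrable, so $\phi_j(t) \to \phi_j^+$ exists; this also proves the first half of (a). For $\tht_j$: if $\phi_j^+ \notin \{0,\pi\}$ then $\sin\phi_j$ is bounded below near $0$, so $\dot\tht_j = O(t^{1/3})$ is integrable and $\tht_j(t)\to\tht_j^+$, giving the rest of (a) and (b); the boundary case $\phi_j^+\in\{0,\pi\}$ is degenerate (the collision direction is the $z$-axis) and $\tht_j^+$ can be chosen arbitrarily — I would just fix a value, noting the spherical coordinate is not well-defined there, and part (c) is read off from $\phi_k = \pi-\phi_j$, $\tht_k = \pi+\tht_j$ by passing to the limit. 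Finally (c) follows directly from the identities $r_k = r_j$, $\phi_k = \pi - \phi_j$, $\tht_k = \pi + \tht_j$ established before the proposition, by taking $t\to 0^+$.

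The main obstacle is the circular argument lurking in the first paragraph: to say the relative motion is a perturbed \emph{two-body} problem one needs $|\mf{q}_j - \mf{q}_\ell|$ bounded below for $\ell\notin\I$, which is exactly the hypothesis that $q(0)$ is an $\I$-cluster collision with $|\I|=2$, so this is fine; the real delicacy is making the asymptotic bookkeeping rigorous — in particular extracting that the angular kinetic energy is genuinely lower-order than $\tfrac14 r_j^{-1}$, which relies on the precise constants in Sundman's estimate (Proposition \ref{prop:sundman}) matching up, i.e. $\tfrac12 C_2^2 = \tfrac14 C_1^{-1}$. I would either quote this refined form of Sundman's estimate from \cite{FT04} or re-derive it from $\dot E_j \in L^1$. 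Everything else is routine once the angular momentum $\mathbf{C}_j$ and two-body energy $E_j$ are identified as the right quantities to track.
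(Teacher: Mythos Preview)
The paper does not give a self-contained proof of this proposition; it simply cites \cite{Y15b} and \cite{Y16s}. Your argument --- reduce to a perturbed Kepler problem for $\mf{q}_j$, track the pair's angular momentum $\mathbf{C}_j$ and two-body energy $E_j$, show both have finite limits, use the Sundman cancellation $\tfrac12 C_2^2 = \tfrac14 C_1^{-1}$ to force $\mathbf{C}_j^+ = 0$, then read off $|\mathbf{C}_j(t)| = O(t^{5/3})$ and hence $\dot\phi_j,\ \sin\phi_j\,\dot\tht_j = O(t^{1/3})$ --- is the classical route to this kind of asymptotic-direction result and is essentially what one finds in those references. The handling of the degenerate pole case $\phi_j^+\in\{0,\pi\}$ is adequate: the intrinsic statement is that $\mf{q}_j/r_j$ has a limit on $S^2$, and spherical coordinates are singular there.

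One correction worth flagging: you copied the paper's equation \eqref{eq: nbody rotate} literally, but the Coriolis term there should read $-2\om\J\dot\zt_i$, not $-2\om\J\zt_i$ (this is a typo in the paper). With the correct Coriolis term, the perturbation $\mathbf{b}(t)$ picks up an $O(t^{-1/3})$ contribution from $-2\om\,\e_3\times\dot{\mf{q}}_j$, and your estimate $|\mathbf{C}_j(t)| = O(t^{5/3})$ degrades to $O(t^{4/3})$, yielding only $\dot\phi_j = O(1)$ --- not enough for integrability. The clean fix, which is exactly what the paper does in the proofs of Lemmas \ref{lm:dfm1}--\ref{lm:dfm4}, is to pass to the inertial frame via $\ej q(t)$ before running your argument: there the perturbation is genuinely bounded, your estimates go through verbatim, and $\phi_j$ (being frame-independent) transfers back directly. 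The azimuthal limit $\tht_j^+$ also transfers since $t\to 0$; only $\dot\tht_j\to 0$ acquires an $-\om$ offset in the rotating frame, but this does not affect any of the applications in the paper.
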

This proposition implies $m_j$ and $m_k$ must approach to the binary collision along some definite directions. A detailed proof can be found in \cite{Y15b} and \cite{Y16s}.

Now we will state our first deformation lemma. It is a local property in nature and shows that after a local deformation of the path $q$ in a small neighborhood of the moment $t=0$, we can get ride of the isolated collision and obtain a new path with action value strictly smaller than $q$'s. Up to our knowledge, this type of local deformation first appeared in an unpublished paper by Montgomery \cite{Mo99} and further developed in the thesis of Venturelli \cite{Ve02}. For an isolated binary collision, a better result was obtained by the author in \cite{Y15b}. 
\begin{lm}
\label{lm:dfm1} For any $\om \in \rr$, if $\tht_j^+ \ne \frac{\pi}{2}( \text{mod } 2\pi)$ (resp. $\tht_j^+ \ne -\frac{\pi}{2}( \text{mod } 2\pi)$), then for positive $\ep$ and $\dl$ small enough, there is a $q^{\ep} \in H^1([0, T], \rr^{3N})$ $($a local deformation of $q$ near $t=0$ $)$ satisfying the following properties.

\begin{enumerate}
\item[(a).] If $i \in \N \setminus \I$, $q^{\ep}_i(t) = q_i(t)$, $\forall t \in [0, T]$, and if $i \in \I$,  
$$  \begin{cases}
q^{\ep}_i(t)= q_i(t), \; & \forall t \in [\dl, T]; \\
|q^{\ep}_i(t) - q_i(t)| \le \ep, \; & \forall t \in [0, \dl].
\end{cases}$$
\item[(b).] $x^{\ep}_i(0)= x_i(0)$ and $z^{\ep}_i(0)= z_i(0)$, $\forall i \in \I$. Furthermore $\qe_j(0)= \R_{xz} \qe_k(0)$ with $y^{\ep}_j(0) = -y^{\ep}_k(0)< 0$ $($resp. $ y^{\ep}_j(0) =- y^{\ep}_k(t) > 0).$ 
\item[(c).] $\ao(\qe; T) < \ao(q; T)$.
\end{enumerate}
\end{lm}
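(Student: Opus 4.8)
The plan is to build the local deformation $q^\ep$ explicitly on the small time interval $[0,\dl]$ for the two colliding masses $m_j$ and $m_k$, leaving every other mass untouched. The guiding principle is the one from Montgomery's and Venturelli's local deformation argument, sharpened in \cite{Y15b}: near a binary collision the colliding pair behaves, to leading order, like a Kepler collision, so its action near the collision is well approximated by that of the corresponding Kepler arc; a Kepler collision arc is \emph{not} an action minimizer among paths with the same endpoints once one is allowed to move the collision point off a certain locus, and we can quantify the gain. Concretely, since $\mf q_j = -\mf q_k$, the relative motion $\mf q_j(t)$ approaches the origin as $t\to 0^+$ along the definite direction determined by $(\phi_j^+,\tht_j^+)$ (Proposition~\ref{prop:angle}), and by Proposition~\ref{prop:sundman} it does so with the Kepler rate $r_j\sim C_1 t^{2/3}$, $\dot r_j\sim C_2 t^{-1/3}$. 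The hypothesis $\tht_j^+\ne \pm\frac\pi2\ (\mathrm{mod}\ 2\pi)$ says precisely that the incoming collision direction is \emph{not} perpendicular to the $xz$-plane, i.e. it has a nonzero component along the $x$-axis; this is the freedom we will exploit.

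The construction I would carry out has these steps. First, fix $\dl>0$ small and work in the relative coordinates $\mf q_i$, $i\in\I$, recording the collision configuration $q_j(0)=\R_{xz}q_k(0)$, so $x_j(0)=x_k(0)$, $z_j(0)=z_k(0)$, $y_j(0)=-y_k(0)$ (with $y_j(0)$ possibly zero in the genuinely colliding case). Second, on $[0,\dl]$ replace the colliding pair's relative arc by a comparison arc that (i) agrees with $\mf q_j(t)$ at $t=\dl$ together with its derivative — so that $q^\ep$ glues in $H^1$ to the old path on $[\dl,T]$ — and (ii) at $t=0$ lands at a nearby point $\mf q_j^\ep(0)$ of size $\le\ep$ that is moved, along the $y$-direction, to the side prescribed in (b): $y_j^\ep(0)=-y_k^\ep(0)<0$ (resp.\ $>0$), while keeping $x_j^\ep(0)=x_j(0)$, $z_j^\ep(0)=z_j(0)$. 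Because the collision direction is not $\pm\frac\pi2$ in $\tht$, the Kepler arc can be "opened up" to a collisionless near-Kepler arc with the same size-$t^{2/3}$ profile whose relative potential and kinetic contributions on $[0,\dl]$ are each still integrable, and whose total action on $[0,\dl]$ is strictly smaller than the original collision arc's by a definite amount; this is exactly the local estimate proven in \cite{Y15b} for an isolated binary collision, and the same computation applies verbatim in the rotating frame because the extra terms in $L_\om$ coming from $K_\om$ — the $\om^2|\zt_i|^2$ and the Coriolis $\om\,\mathrm{Im}(\dot{\bar\zt}_i\zt_i)$ pieces — are bounded on the shrinking interval $[0,\dl]$ and contribute $O(\dl)$, hence do not affect the strict inequality once $\dl$ is small. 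Third, keep the center of mass $q_c$ of the pair unchanged on $[0,\dl]$ (reconstruct $q_j^\ep=q_c+\mf q_j^\ep$, $q_k^\ep=q_c-\mf q_j^\ep$), which automatically gives (a) — namely $q_i^\ep=q_i$ for $i\notin\I$ and $\mathrm C^0$-closeness for $i\in\I$ — and also gives the symmetry $q_j^\ep(0)=\R_{xz}q_k^\ep(0)$ with the sign of $y_j^\ep(0)$ as desired in (b). Fourth, assemble: $\ao(q^\ep;T)=\ao(q^\ep;0,\dl)+\ao(q;\dl,T)<\ao(q;0,\dl)+\ao(q;\dl,T)=\ao(q;T)$, which is (c).

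I expect the main obstacle to be the second step: making rigorous the claim that the collision Kepler arc, when perturbed off the collision along the non-vertical direction, strictly decreases action on $[0,\dl]$, with a gain that is \emph{uniform} enough to dominate the $O(\dl)$ error terms introduced by the rotating-frame corrections and by the fact that $\mf q_j$ is only asymptotically Keplerian rather than exactly so. The way I would handle this is to quote the already-established isolated-binary-collision deformation lemma of \cite{Y15b} as the core estimate (it is essentially Lemma~\ref{lm:dfm1} with $\om=0$ restricted to $|\I|=2$), and then argue that passing from $\A$ to $\ao$ only changes the Lagrangian by $L_\om-L=\frac12\sum_i(2\om\,\mathrm{Im}(\bar\zt_i\dot\zt_i)+\om^2|\zt_i|^2)$, whose integral over the modified interval differs from that over the original by at most $C(\om)\dl\cdot\sup_{[0,\dl]}(|\zt|^2+|\zt||\dot\zt|)$, which tends to $0$ with $\dl$ along both the old and the new arc since near a binary collision $|\zt_j|,|\zt_k|$ stay bounded and their $H^1$ norms on $[0,\dl]$ are controlled. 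A secondary technical point, also inherited from \cite{Y15b}, is checking that the modified arc is genuinely collision-free on $(0,\dl]$ (no new collisions of $m_j$ or $m_k$ with the other bodies), which follows from the $\ep$-$\mathrm C^0$-closeness together with the fact that $q_\ell(0)\ne q_j(0)$ for $\ell\notin\I$ (this is where the $|\I|=2$ isolated-cluster hypothesis is used).
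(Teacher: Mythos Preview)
Your plan for the $\omega=0$ case is the right one and matches the paper: quote the isolated-binary-collision deformation lemma (Lemma~2.1 of \cite{Y16s}, proved in detail as Proposition~4.3 of \cite{Y15b}), which in turn rests on Terracini's blow-up together with the Kepler fact that the zero-energy collision--ejection arc between two points equidistant from the origin has strictly larger action than the direct or indirect Keplerian arc with the same transfer time.

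Where you diverge from the paper is in the passage to $\omega\ne 0$. You try to control the rotating-frame corrections perturbatively, bounding $\int_0^\delta (L_\omega-L)$ on the deformed versus the original path and arguing that the difference is $O(\delta)$, hence negligible against the Kepler gain. The paper does something much simpler: it uses the \emph{exact} identity
\[
\ao(\tilde q;T)=\A(e^{\J\omega t}\tilde q;T),
\]
valid for every $H^1$ path. One applies the $\omega=0$ lemma to the path $p(t):=e^{\J\omega t}q(t)$ (which has the same binary collision at $t=0$, with the same limiting angles $\phi_j^+,\tht_j^+$ since $e^{\J\omega t}\to 1$ as $t\to 0$), obtains $p^\ep$ with $\A(p^\ep;T)<\A(p;T)$, and sets $q^\ep(t):=e^{-\J\omega t}p^\ep(t)$. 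Because $|e^{-\J\omega t}|=1$ and $e^{-\J\omega\cdot 0}=1$, properties (a) and (b) transfer verbatim, and (c) is immediate from the identity. No error estimates are needed at all.

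Your perturbative route can be made to work, but as written it has a gap: you bound the Coriolis contribution by $C(\omega)\,\delta\cdot\sup_{[0,\delta]}|\zt_i||\dot\zt_i|$, yet for the \emph{collision} arc $|\dot\zt_i|$ blows up like $t^{-1/3}$ (Proposition~\ref{prop:sundman}), so that supremum is infinite and the stated bound is vacuous. A correct estimate would integrate, e.g.\ via Cauchy--Schwarz using the finiteness of $\|\dot\zt_i\|_{L^2(0,\delta)}$, giving a difference of order $\ep\,\delta^{2/3}$; this is still dominated by the $\sim\delta^{1/3}$ Kepler gain, so the argument can be salvaged. But the paper's change-of-frame trick sidesteps the whole computation.
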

\begin{rem} \label{rem: dfm1}
The boundary condition $y_j^{\ep}(0) > 0$ (resp. $y_j^{\ep}(0)< 0$) listed in property (b) of the above lemma is directly related with the $\xi$-topological constraints introduced in the previous section. Because of this boundary condition, the above lemma does not hold, when $\tht_j^+ = \frac{\pi}{2}( \text{mod } 2\pi)$ (resp. $\tht_j^+ = -\frac{\pi}{2}( \text{mod } 2\pi)$). By the result of Gordon (\cite{Go77}), we know that a local deformation result like above does not exist in this case.
\end{rem}

\begin{proof}
First for $\om=0$, the above result is the same as Lemma 2.1 in \cite{Y16s}. A proof can be given based on Terracini's blow-up technique and a basic result of the Kepler problem, which says the zero energy collision-ejection solution which connects two different points with the same distance to the origin, has action value strictly large than the direct and indirect arcs joining these two points with the same transfer time (\cite{FGN11}). In the planar case a detailed proof can be found in \cite[Proposition 4.3]{Y15b}. The spatial case can be proven similarly. 

When $\om \ne 0$, we can reduce the problem to the case with $\om=0$. To see this, recall that for any path $\qt \in H^1([0, T], \rr^{3N})$, the corresponding path $\ej \qt \in H^1([0, T], \rr^{3N})$, see \eqref{eq: q om}, satisfies $\A(\ej \qt; T) = \ao(\qt; T)$. 
\end{proof}

To deal with the cases that are not covered by Lemma \ref{lm:dfm1} (see Remark \ref{rem: dfm1}), new techniques other than local deformations have to been introduced, so some global property of the collision solution can be used. This is why the monotone constraints were introduced in \cite{Y15c}. Here we reproduce the key results under weaker conditions following \cite{Y16s}.

\begin{dfn} \label{dfn:sepa}
Given two arbitrary subsets $\I_0, \I_1$ of $\N$ satisfying 
$$ \I_0 \cup \I_1 =\N \setminus \I, \;  \; \I_0 \cap \I_1 =\emptyset, \; \text{ and } \; \I_0 \cup \I_1 \neq \emptyset. $$ 
We say $q(t), t \in [0,T],$ is \textbf{$x$-separated} $($by $m_j$ and $m_k)$, if 
\begin{enumerate}
\item[(i).] $x_k(T) \le x_k(t) \le x_k(0)=x_j(0) \le x_j(t) \le x_j(T),$ $\forall t \in [0, T]$,
\item[(ii).] 
$x_i(t) \le x_k(T)$, if $i \in \I_0$ and $x_i(t) \ge x_j(T)$, if $i \in \I_1$, $\forall t \in [0, T]$,
\end{enumerate}
and \textbf{$z$-separated} $($by $m_j$ and $m_k )$, if
\begin{enumerate}
\item[(iii).] $z_k(T) \le z_k(t) \le z_k(0)=z_j(0) \le z_j(t) \le z_j(T),$ $\forall t \in [0, T]$,
\item[(iv).] 
$z_i(t) \le z_k(T)$, if $i \in \I_0$, and $z_i(t) \ge z_j(T)$, if $i \in \I_1$, $\forall t \in [0, T]$.
\end{enumerate}
\end{dfn}

\begin{lm}
 \label{lm:dfm2} For any $\om \in \rr$, when $\tht_j^+= \pm \frac{\pi}{2} (\text{mod } \pi)$, if $q(t), t \in [0, T]$ is $z$-separated and $z_j(T) > z_k(T)$, then for $\ep>0$ small enough, there is a new path $\qe: [0,T] \to \rr^{3N}$ defined by
 \begin{equation*}
  \begin{cases}
   \qe_i(t) &= q_i(t) - \ep^2 \e_3, \; \forall t \in [0,T], \text{ if } i \in \mathbf{I}_0,\\
   \qe_i(t) &= q_i(t) +\ep^2 \e_3, \; \forall t \in [0,T], \text{ if } i \in \mathbf{I}_1, \\
   \qe_j(t) &= q_j(t) +\ep^2 \e_3, \;\; \qe_k(t) = q_k(t) - \ep^2 \e_3, \; \forall t\in [\ep, T], \\
   \qe_j(t) &= q_j(t) + t(2\ep -t) \e_3, \;\; \qe_k(t) = q_k(t) -t(2\ep -t) \e_3, \; \forall t \in [0, \ep],
  \end{cases}
 \end{equation*}
 which satisfies $\ao(\qe; T) < \ao(q; T)$.

 Moreover when $\om=0$, if $q(t), t \in [0, T],$ is $x$-separated and $x_j(T) > x_k(T)$, then the above result still holds after replacing each $\e_3$ by $\e_1.$
\end{lm}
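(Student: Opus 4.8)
\textbf{Plan for the proof of Lemma \ref{lm:dfm2}.} The strategy is to show that the proposed vertical deformation (along $\e_3$, or along $\e_1$ when $\om=0$) strictly decreases the action. I will split the action difference $\ao(\qe;T)-\ao(q;T)$ into a kinetic part and a potential part and estimate each. The key observation is that on the subinterval $[\ep, T]$ the deformation is a pure rigid translation of the clusters $\I_0, \I_1$ and of $m_j, m_k$; since a rigid vertical translation (in the $z$-direction) does not change the $z$-velocities and only enters $K_\om$ through the $z$-kinetic term, which is translation invariant, the kinetic energy on $[\ep, T]$ is unchanged. (When $\om=0$ the analogous statement for the $\e_1$-direction holds because $K_0$ is the full Euclidean kinetic energy, which is translation invariant in every direction; this is exactly why the $x$-separated version requires $\om=0$.) Thus on $[\ep,T]$ the entire change in action comes from $U$.

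The potential change on $[\ep, T]$ is where the \emph{separation} hypotheses are used. For a pair of masses whose mutual separation along the relevant axis is being \emph{increased} by the deformation, the corresponding term in $U$ strictly decreases; for pairs whose separation is being decreased, $U$ increases, but those are controlled. Concretely: $m_j$ moves up by $\ep^2$ and $m_k$ moves down by $\ep^2$, and since $z_j(t)\ge z_k(t)$ everywhere (from the $z$-separated condition (iii)), the distance $|q_j(t)-q_k(t)|$ strictly increases — and because $z_j(T)>z_k(T)$ this pair is genuinely non-degenerate, giving a strictly negative contribution of order $\ep^2$ near $t=T$ (and non-positive elsewhere). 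For a pair with one mass in $\I_0$ and one in $\I_1$, both are pushed apart, so again $U$ decreases. For a pair within $\I_0$ (or within $\I_1$), both are translated by the same vector, so that term is unchanged. The only terms that increase are those pairing $m_j$ with some $m_i$, $i\in\I_0$ (both moving, but in the "wrong" relative sense), or $m_k$ with $i\in\I_1$; here conditions (iv) and (i) guarantee that $z_i(t)\le z_k(T)\le z_k(t)\le z_j(t)$, so after the deformation $z_i-\ep^2 \le z_j+\ep^2$ may fail to keep them separated, but the change in $1/|q_i-q_j|$ is $O(\ep^2)$ with a bounded coefficient since these distances stay bounded away from zero (they are non-colliding pairs on $[\ep,T]$). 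A careful bookkeeping of signs, using that the clusters sit on opposite sides and $m_j,m_k$ sit strictly in the middle, shows the net effect on $[\ep,T]$ is $\le -c\,\ep^2$ for some $c>0$ for $\ep$ small.

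It remains to handle the small interval $[0,\ep]$, where $\qe_j, \qe_k$ follow the interpolating arc $q_j(t)\pm t(2\ep-t)\e_3$ (so that the deformation matches continuously at $t=\ep$ and at $t=0$ gives the boundary displacement $0$, matching $q$ at the collision instant up to the required boundary behavior). On $[0,\ep]$ I will show the kinetic contribution is $O(\ep)$ — the added velocity $\dot{}\,(t(2\ep-t))= 2(\ep-t)$ is $O(\ep)$, so the cross term and square term integrate to $O(\ep^3)$ and $O(\ep^2)$ respectively — while the potential change is controlled using the Sundman asymptotics of Proposition \ref{prop:sundman}: $r_j(t)\sim C_1 t^{2/3}$, so $1/|q_j-q_k|\sim$ const$\cdot t^{-2/3}$, which is integrable near $0$, and the perturbation of this singular term by a vertical shift of size $t(2\ep-t)=O(\ep t)$ changes the integral by at most $O(\ep)$ (indeed, because $\tht_j^+=\pm\pi/2 \pmod\pi$ the collision direction is essentially horizontal, i.e. along the $y$-axis, so a vertical push is "transverse" to the collision and only helps — or at worst costs $o(\ep)$). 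Hence the $[0,\ep]$ contribution is $o(\ep^2)$, negligible against the $-c\ep^2$ gain from $[\ep,T]$, and the lemma follows by choosing $\ep$ small.

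\textbf{Main obstacle.} The delicate point is the potential estimate on $[\ep,T]$: one must verify that \emph{every} pairwise distance behaves monotonically (or is at least controlled) under the two-parameter family of translations, and in particular that no pair that was separated becomes collisional or loses a definite amount of separation. This is precisely the role of the hypotheses in Definition \ref{dfn:sepa}, and the bookkeeping — splitting the sum over pairs into the five types (within $\I_0$, within $\I_1$, $\I_0$–$\I_1$, $\{j,k\}$, and $\{j\text{ or }k\}$–cluster) and checking the sign and order of each — is the heart of the argument. The interpolation interval $[0,\ep]$ is routine by comparison, needing only Sundman's estimate to see the singular potential term is integrable and insensitive to an $O(\ep t)$ transverse perturbation.
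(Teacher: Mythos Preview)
Your plan diverges from the paper's proof in approach, and it also contains a genuine gap in the estimates.

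\textbf{The paper's proof.} The paper does \emph{not} carry out a direct estimation. It simply cites \cite[Lemma 2.3]{Y16s} for the case $\om=0$ and then, for $\om\ne 0$, reduces to that case by passing to the non-rotating path $e^{\J\om t}q(t)$ and using $\ao(q;T)=\A(e^{\J\om t}q;T)$. The only point to verify is that the $z$-separated hypothesis and the specific $\e_3$-deformation survive the map $q\mapsto e^{\J\om t}q$; they do, because the rotation $e^{\J\om t}$ acts only on the $\zt$-coordinates and leaves the $z$-coordinates untouched. The paper notes explicitly that this reduction fails for the $x$-separated statement when $\om\ne 0$, since an $\e_1$-translation does not commute with $e^{\J\om t}$.

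\textbf{The gap in your estimates.} Your overall decomposition (kinetic vs.\ potential, $[0,\ep]$ vs.\ $[\ep,T]$, five types of pairs) is sound, and your observation that the $z$-part of $K_\om$ is translation invariant is exactly the mechanism behind the paper's reduction. However, your claimed orders on $[0,\ep]$ do not close. The cross term in the kinetic change is
\[
\int_0^\ep 2\big(\dot z_j(t)-\dot z_k(t)\big)(\ep-t)\,dt=4\int_0^\ep \dot{\zf}_j(t)(\ep-t)\,dt,
\]
and since nothing in the hypothesis forces $\phi_j^+=\pi/2$, one only has $\dot{\zf}_j(t)\sim C_2\cos\phi_j^+\,t^{-1/3}$ from Proposition~\ref{prop:sundman} and Proposition~\ref{prop:angle}. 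This makes the kinetic increase of order $\ep^{5/3}$, not $O(\ep^2)$ or $o(\ep^2)$ as you assert. The ``definite'' $O(\ep^2)$ potential gain you locate near $t=T$ is therefore too small to absorb it. The genuine compensation comes from the \emph{singular} potential term $1/|q_j-q_k|$ on $[0,\ep]\cup[\ep,\delta]$, whose decrease is likewise of order $\ep^{5/3}$; establishing that this decrease dominates the kinetic increase requires comparing the explicit Sundman coefficients, and the leading coefficients are in fact very close, so a careful (not back-of-the-envelope) computation is needed. Your plan treats the $[0,\ep]$ potential change as a negligible $O(\ep)$ perturbation of an integrable singularity, which misses this.

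A smaller point: in your bookkeeping on $[\ep,T]$ you say the pairs $m_j$--$\I_0$ and $m_k$--$\I_1$ give \emph{increasing} potential terms. In fact, under Definition~\ref{dfn:sepa}, $z_j(t)\ge z_k(T)\ge z_i(t)$ for $i\in\I_0$ and $z_k(t)\le z_j(T)\le z_i(t)$ for $i\in\I_1$, so these pairs are pushed apart as well; every pairwise distance is non-decreasing on all of $[0,T]$. This error is harmless for the conclusion but indicates the case analysis should be redone.
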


\begin{rem}
\label{rem: coercive center of mass} In the above lemma if we fix the center of mass of $q(t)$ at origin, for the new path $\qe(t)$, its center of mass may not be at the origin. This is why we do not make such assumption in this paper. 
\end{rem}

\begin{proof}
When $\om=0$, the corresponding result have already been proven in \cite[Lemma 2.3]{Y16s}. When $\om \ne 0$, following the same argument given in the proof of Lemma \ref{lm:dfm1}, we may reduce the problem to the case $\om =0$, by considering the path $\ej q(t)$, $t \in [0, T]$ and the action functional $\A$. We point out that for $\om \ne 0$, this only works when $q(t)$ is $z$-separated, but not $x$-separated, as $\ej q(t)$ will still be $z$-separated, when $q(t)$ is, but may not be $x$-separated, when $q(t)$ is. 
\end{proof}

This finishes the first half of this section. In the second half, we do not assume $|\I|=2$ anymore, but only $|\I| \ge 2$. Furthermore we define 
\begin{equation}
\label{eq: T} \mf{T}:=\big\{ \tau = (\tau_i)_{i \in \N}| \; \tau_i \in \{ 0, \pm 1 \}  \big\}.
\end{equation} 

\begin{lm}
\label{lm:dfm4} If the collision solution $q(t), t \in [0, T]$, is contained in a plane parallel to the $xy$-plane, i.e., $z_i(t) \equiv \text{Constant}$, $\forall t \in [0, T]$ and $\forall i \in \N$), then for $\ep>0$ small enough and any $\tau \in \mf{T}$ satisfying
\begin{equation}
\label{eqn:cond tau} \tau_{i_0} \ne \tau_{i_1}, \text{ for some } \{i_0 \ne i_1\} \subset \I, \text{ and } \tau_i = 0, \; \forall i  \in \N \setminus \I,
\end{equation} 
there is an $f \in H^1([0, T], \rr)$ with 
\begin{enumerate}
\item[(a).] $f(t)=1$, $\forall t \in [0, \dl_1]$, for some $ \dl_1=\dl_1(\ep)>0$ small enough,
\item[(b).] $f(t) =0$, $\forall t \in [\dl_2, T]$, for some $ \dl_2=\dl_2(\ep) > \dl_1$ small enough,
\item[(c).] $f(t)$ is decreasing for $t \in [\dl_1, \dl_2]$, 
\end{enumerate}
such that for any $\om \in \rr$, the path $\qe = (\qe_i)_{i \in \N} \in H^1([0, T], \rr^{3N})$ defined by 
$$ \qe_i(t)=q_i(t)+ \ep f(t) \tau_i \e_3, \;\; \forall t \in [0, T], \; \forall i \in \N, $$ 
satisfies $\ao(\qe; T)< \ao(q; T)$. 
\end{lm}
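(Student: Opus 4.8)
\textbf{Proof proposal for Lemma \ref{lm:dfm4}.}

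The plan is to show that moving the colliding bodies in $\I$ apart along the $z$-axis (with weight $\tau_i$) strictly decreases the action, using the fact that the original solution lies in a horizontal plane so that the perturbation is purely vertical and decouples from the rotating-frame Coriolis/centrifugal terms. First I would write the action difference $\ao(\qe; T) - \ao(q; T)$ explicitly. Since $q_i(t) = (\zt_i(t), z_i(t))$ with $z_i$ constant and the perturbation only changes the $z$-components, the kinetic part $K_\om$ contributes only through the $\ey\sum_i |\dot z_i|^2$ term (the $|\dot\zt_i + \J\om\zt_i|^2$ piece is untouched), so the kinetic increment is exactly $\ey \ep^2 \int_0^T \dot f(t)^2 \big(\sum_{i \in \I} \tau_i^2\big)\,dt$, a quantity of order $\ep^2$ that depends only on $f$ and not on $\om$. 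The potential increment is $\int_0^T \big[ U(\qe(t)) - U(q(t)) \big]\,dt$; I would split the pair sum into pairs inside $\I$, pairs with one index in $\I$ and one outside, and pairs with both indices outside $\I$ (the last contributing nothing).

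The key step is the estimate of the potential gain coming from the colliding pairs. For a pair $\{i_0, i_1\} \subset \I$ with $\tau_{i_0} \neq \tau_{i_1}$ that collides at $t=0$, near $t=0$ we have $|q_{i_0}(t) - q_{i_1}(t)| \to 0$, while after the deformation the two bodies are separated by an extra vertical gap of size $\ep |\tau_{i_0} - \tau_{i_1}| f(t) \ge \ep$ on $[0,\dl_1]$; hence on $[0, \dl_1]$,
\[
U(\qe(t)) \le U(q(t)) - \Big( \frac{1}{|q_{i_0}(t) - q_{i_1}(t)|} - \frac{1}{\ep} \Big) + C,
\]
and since $\int_0^{\dl_1} \frac{dt}{|q_{i_0}(t)-q_{i_1}(t)|} \sim \int_0^{\dl_1} C t^{-2/3}\,dt$ (by the Sundman estimate in Proposition \ref{prop:sundman}, $r\sim C t^{2/3}$ for the colliding binary, and a similar bound holds for any colliding cluster), this integral is finite but the crucial point is that choosing $\dl_1 = \dl_1(\ep)$ suitably makes the \emph{net} potential gain from the collision dominate. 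I would choose $f$ to equal $1$ on $[0,\dl_1]$, decrease linearly on $[\dl_1, \dl_2]$, and vanish afterward, with $\dl_1 \ll \dl_2 \ll 1$ and $\dl_2 - \dl_1$ chosen so that $\ep^2 \int \dot f^2 = \ep^2/(\dl_2 - \dl_1)$ is small; then on $[\dl_1, T]$ the bodies stay collision-free so the potential change there is $O(\ep)$ uniformly, and on $[0,\dl_1]$ I get a gain of at least $c/\ep - C/\ep^{\,?}$... more carefully, the gain on $[0,\dl_1]$ is $\int_0^{\dl_1}\big(\frac{1}{|q_{i_0}-q_{i_1}|} - \frac{1}{\text{perturbed distance}}\big)dt \ge c\,\dl_1^{1/3} - C\dl_1/\ep$ for the colliding pair, which for fixed small $\ep$ and $\dl_1$ chosen as a large-enough multiple of $\ep^3$ (so $\dl_1^{1/3} \gg \dl_1/\ep$) beats the $O(\ep^2/(\dl_2-\dl_1)) + O(\ep)$ losses. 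Balancing these three scales (the kinetic loss $\sim \ep^2/(\dl_2-\dl_1)$, the off-collision potential loss $\sim \ep$, and the collision potential gain $\sim \dl_1^{1/3}$) is the technical heart of the argument.

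The main obstacle I anticipate is making the collision-gain lower bound rigorous when $|\I| > 2$: unlike the binary case, a cluster collision need not shrink self-similarly, and one must ensure that \emph{some} pair inside $\I$ with differing $\tau$-values actually has distance $\to 0$ fast enough to give an integrable-but-large gain; this is where I would invoke the asymptotic control on colliding clusters (the analogue of Proposition \ref{prop:sundman} for the cluster, i.e. the mutual distances within $\I$ are $O(t^{2/3})$ while distances to $\N \setminus \I$ stay bounded below), together with condition \eqref{eqn:cond tau} guaranteeing the existence of such a pair. A secondary point, essentially free here, is the uniformity in $\om$: because the deformation is purely vertical and $z_i$ was constant, neither $K_\om$'s rotational term nor $U$ sees $\om$ in the increment, so the same $f$ and the same estimates work verbatim for every $\om \in \rr$, which is exactly what the statement asserts. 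I would close by recording that properties (a)--(c) of $f$ hold by construction and that $\ao(\qe;T) < \ao(q;T)$ follows from the scale-balancing inequality above.
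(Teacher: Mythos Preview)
Your approach is sound and, mechanically, it is the same as the paper's---but the paper's proof is much terser: it first reduces $\om\ne0$ to $\om=0$ by passing to the non-rotating path $e^{\J\om t}q(t)$ (which leaves the vertical deformation unchanged), and then simply cites \cite[Appendix]{Y15c} for the $\om=0$ case, remarking that the key point is that the deformation is orthogonal to the plane containing $q$. Your direct computation that the $|\dot\zt_i+\J\om\zt_i|^2$ term is untouched is exactly the same observation in different clothes, and your sketch of the $\om=0$ case fills in what the paper leaves to the reference.

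One genuine simplification you miss makes your ``balancing'' much cleaner: because all bodies lie in a \emph{common} horizontal plane, the perturbed distances satisfy
\[
|\qe_i(t)-\qe_j(t)|^2 \;=\; |q_i(t)-q_j(t)|^2 \;+\; \ep^2 f(t)^2 (\tau_i-\tau_j)^2 \;\ge\; |q_i(t)-q_j(t)|^2
\]
for \emph{every} pair and every $t$. Hence $U(\qe(t))\le U(q(t))$ pointwise, and there is no ``off-collision potential loss $\sim\ep$'' to worry about; the only positive term in the action increment is the kinetic one. This is precisely what the paper's remark ``the proof works as long as the deformation is orthogonal to the plane'' encodes. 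With this, you only need the potential \emph{gain} from the single pair $\{i_0,i_1\}\subset\I$ with $\tau_{i_0}\ne\tau_{i_1}$ on $[0,\dl_1]$ (using the cluster Sundman bound $|q_{i_0}-q_{i_1}|\le Ct^{2/3}$, which you correctly flag) to beat $\ep^2/(\dl_2-\dl_1)$; taking e.g.\ $\dl_1\sim\ep^{3/2}$ and $\dl_2-\dl_1\sim\ep^{1/2}$ gives gain $\gtrsim\ep^{1/2}$ against loss $\lesssim\ep^{3/2}$, so the muddled scaling in your sketch (``$\dl_1$ a large-enough multiple of $\ep^3$'') can be replaced by a clean one once the spurious $O(\ep)$ loss is dropped.
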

\begin{rem}
When applying the above lemma to problems with symmetric constraints, the second part of the conditions in \eqref{eqn:cond tau}, i.e.
$$ \tau_i =0, \;\; \forall i \in \N \setminus \I, $$
need to be modified correspondingly, so that the deformed path will still satisfies the symmetric constraints. 
\end{rem}

\begin{proof}
By the same argument given in the Lemma \ref{lm:dfm1} and \ref{lm:dfm2}, the cases $\om \ne 0$ can be reduce to the case $\om=0$ by considering the path $\ej q(t)$ and the action functional $\A_0$. 

When $\om =0$, a detailed proof of this result with $q(t)$ belonging to a one dimensional subspace of $\rr^3$ can be found in \cite[Appendix]{Y15c}. The proof is exactly the same when it is contained in plane. As the proof works as long as the paths when deformed, are always along the directions that are orthogonal to the plane, where the path $q(t)$ belongs to. 
\end{proof}

All the results in this section are stated for a collision solution $q(t)$, $t \in [0, T]$ with at least one collision at the moment $t=0$. Meanwhile similar results can be obtained when we assume $q(t)$ has at least one collision at the moment $t=T$. We will not repeat the details here.



\section{planar linear chains in the spatial N-body problem} \label{sec: fixed}

In this section only the non-rotating frame will be considered and the proofs of Theorem \ref{thm: linear chain} and \ref{thm: linear chain extra sym} will be given. First we prove a lemma, which shows a minimizer of $\A$ in $\ldn_{\xi}$ must be planar and satisfy certain monotone property along the $x$ and $z$-direction. In particular it shows the \emph{monotone constraints} required in \cite{Y15c} is  unnecessary, as it is a consequence of the minimization property of the path.

\begin{lm}
\label{lm: monotone fixed} For any $\xi \in \Xi_N$, if $q \in \ldn_{\xi}$ is a minimizer of $\A$ among all loops in $\ldn_{\xi}$ satisfying \eqref{eq: coercive x} and \eqref{eq: coercive z}, then it must satisfy the following properties: 
\begin{enumerate}
\item[(a).] either $x_0(t_1) \le x_0(t_2)$ or $x_0(t_1) \ge x_0(t_2)$ always holds, $\forall 0 \le t_1 \le t_2 \le \pi$;
\item[(b).] either $z_0(t_1) \le z_0(t_2)$ or $z_0(t_1) \ge z_0(t_2)$ always holds, $\forall 0 \le t_1 \le t_2 \le \pi$;
\item[(c).] For all $t \in \rr$, $q_0(t)$ belongs to a fixed plane, which is invariant under the action of $\R_{xz}$.
\end{enumerate}
\end{lm}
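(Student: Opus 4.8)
The plan is to prove all three properties simultaneously by a contradiction argument based on the deformation lemmas of Section \ref{sec: lemmas}, exploiting the fact that a minimizer, if it fails one of the monotonicity/planarity conclusions, can be modified to produce a new loop in $\ldn_\xi$ (still satisfying the coercive conditions \eqref{eq: coercive x}, \eqref{eq: coercive z}) with strictly smaller action. First I would record the basic structural facts forced on a minimizer $q$ by the Euler--Lagrange equations and the $D_N$-symmetry. Since $q$ minimizes in $\ldn_\xi$, it is a (possibly collision) critical point of $\A$ in the interior, so on every subinterval of $(0,\pi/N)$ free of collisions it solves \eqref{eq:nbody}; the boundary conditions \eqref{eq: symmetric boundary moment 1} say that at $t=0$ and $t=\pi/N$ the ``diagonal'' masses $m_0$ and $m_{[\frac{N-1}{2}]}$ lie in the $xz$-plane, and the $\R_{xz}$-symmetry \eqref{eq: symm q0} gives $\dot y_0(0)$, $\dot y_0(\pi)$ information. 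The key point to extract is that $y_0$ does not vanish on the open fundamental domain (by \eqref{eq: symmetric boundary moment 2} applied across all of $\N$) so the $\xi$-topological class is preserved under any small deformation keeping the endpoint $y$-coordinates on the correct side of zero.

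Next I would handle planarity, i.e.\ part (c), which I expect to be organized around Lemma \ref{lm:dfm4} together with a ``go to fewer dimensions decreases action'' principle. Suppose $q_0(\rr/2\pi\zz)$ does not lie in a fixed plane invariant under $\R_{xz}$; the $\R_{xz}$-symmetry means any plane it spans must be either the $xz$-plane, a plane containing the $x$-axis, or all of $\rr^3$, so the only way to fail (c) is that $q$ is genuinely three-dimensional. In that case I would project appropriately and use a variational/convexity comparison: writing the kinetic part as a sum of squares in the three coordinate directions and using that the potential only sees pairwise distances, I would argue that collapsing the ``extra'' direction — say replacing the $z$-components by $0$ if the configuration is already effectively planar after a rotation, or more carefully using the rotational invariance about the $y$-axis noted in Remark \ref{rem: strict monotone} — cannot increase the action, and strict inequality follows unless the collapsed direction was identically zero. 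This is the delicate step: one must be sure the comparison loop still lies in $\ldn_\xi$ (the $\xi$-constraint is about $y_0$, which is untouched) and still satisfies \eqref{eq: coercive x}, \eqref{eq: coercive z}, and that the inequality is strict; I expect to invoke the same blow-up/Kepler comparison philosophy used in Lemma \ref{lm:dfm1}, or simply the averaging trick, but the honest difficulty is ruling out the borderline case where the projection is an isometry onto a plane — which is exactly what (c) asserts.

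For parts (a) and (b), the monotonicity of $x_0$ and $z_0$ on $[0,\pi]$, I would argue by contradiction: if, say, $z_0$ is not monotone on $[0,\pi]$ then, using the $D_N$-symmetry relations \eqref{eq: symmetric monotone even}--\eqref{eq: symmetric monotone odd}-type bookkeeping, there is a subinterval on which $z_0$ has an interior local extremum, and by the choreography relation \eqref{eq: simple choreography solution} every mass inherits a corresponding turning point. I would then build a rearrangement of the loop that is $z$-separated in the sense of Definition \ref{dfn:sepa} — splitting $\N\setminus\I$ into the masses lying above/below a suitable level — and apply the $z$-separated half of Lemma \ref{lm:dfm2}, or, when the relevant collision direction $\tht_j^+$ is generic, Lemma \ref{lm:dfm1}, to produce a strictly-lower-action competitor; the same argument with $\e_1$ in place of $\e_3$ (legitimate here since we are in the non-rotating frame $\om=0$, exactly the case to which the $x$-version of Lemma \ref{lm:dfm2} applies) gives part (a). The main obstacle, as in \cite{Y15c}, is arranging the separation hypothesis: one must show that a non-monotone minimizer can be ``cut and sorted'' so that the hypotheses (i)--(iv) of Definition \ref{dfn:sepa} hold after possibly re-indexing the clusters $\I_0,\I_1$, and that this reordering respects the $D_N$-symmetry so the deformed path remains in $\ldn_\xi$; establishing this combinatorial/symmetry compatibility, rather than the analytic deformation itself, is where I expect the real work to lie, and it is presumably where the fact that we no longer need the a priori monotone constraints of \cite{Y15c} gets paid for.
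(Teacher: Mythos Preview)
Your approach has a genuine gap: the deformation lemmas of Section~\ref{sec: lemmas} that you want to invoke (Lemmas~\ref{lm:dfm1} and~\ref{lm:dfm2}) are stated and proved only for \emph{collision solutions}; their hypotheses include an isolated binary collision at $t=0$ and use the asymptotic angles $\tht_j^+$ from Proposition~\ref{prop:angle}. A minimizer that merely fails monotonicity in $z_0$ need not have any collision at all, so you have no handle to apply these lemmas. The ``cut and sort to make it $z$-separated, then apply Lemma~\ref{lm:dfm2}'' step therefore does not go through, and the analogous plan for (c) via Lemma~\ref{lm:dfm4} has the same defect.

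The paper's argument is entirely different and does not touch Section~\ref{sec: lemmas}. It is a global monotone-rearrangement comparison: keep $x_i,y_i$ unchanged and replace each $z_i$ on $[0,\pi/N]$ by
\[
\tilde z_i(t)=\pm\int_0^t|\dot z_i(s)|\,ds+C_i,
\]
with the sign alternating along the permutation $\sigma$ of \eqref{eq: sg} and the constants $C_i$ chosen inductively so that the boundary identities in \eqref{eq: symmetric boundary moment 1} are preserved. Since $|\dot{\tilde z}_i|=|\dot z_i|$ a.e.\ and $\tilde x_i=x_i$, $\tilde y_i=y_i$, the kinetic energy is exactly the same, and $\tilde q\in\ldn_\xi$ because the $\xi$-constraint involves only $y$-coordinates. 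The heart of the proof is a chain of triangle inequalities, using the boundary matchings $z_{\sigma(j)}(0)=z_{\sigma(j+1)}(0)$ or $z_{\sigma(j)}(\pi/N)=z_{\sigma(j+1)}(\pi/N)$, showing $|z_{\sigma(i)}(t)-z_{\sigma(k)}(t)|\le|\tilde z_{\sigma(i)}(t)-\tilde z_{\sigma(k)}(t)|$ for all pairs; hence $U(\tilde q)\le U(q)$ with equality iff \eqref{eq: increasing} or \eqref{eq: decreasing} already holds, which is exactly (b). Part (a) is the same with $x$ in place of $z$. Part (c) is then a one-line consequence: rotate about the $y$-axis so that $z_0(0)=z_0(\pi)$ (this keeps the loop in $\ldn_\xi$ and the action unchanged), and then (b) forces $z_0$ to be constant, so the whole orbit lies in a plane parallel to the $xy$-plane.
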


\begin{proof}
We give a detailed proof of property (b), while property (a) can be proven similarly. The main idea is that using $q$ we can build a new path in $\ldn_{\xi}$, such that its action value is strictly smaller than $q$'s, when property (b) does not hold, and the same as $q$'s, when property (b) holds.  

Let's define a new path $\qt(t)=(\qt_i(t))_{i \in \N} \in H^1([0, \pi/N], \rr^{3N}\}$, such that 
$$ \xt_i(t) = x_i(t) \text{ and } \yt_i(t) = y_i(t), \; \; \forall t \in  [0, \frac{\pi}{N}] \text{ and } i \in \N. $$
Meanwhile $\ztl_i(t)$ will be defined following an inductive procedure. First, let 
$$ \ztl_0(t) = \int_0^t |\zd_0(s)| \,ds + C_0, \;\; \forall t \in [0, \frac{\pi}{N}], $$
for some constant $C_0$ (to be determined later) and 
$$ \ztl_{N-1}(t)= -\int_0^t |\zd_{N-1}(s)| \,ds + C_{N-1}, \;\; \forall t \in [0, \frac{\pi}{N}], $$
with a proper constant $C_{N-1}$, such that $\ztl_0(\frac{\pi}{N}) = \ztl_{N-1}(\frac{\pi}{N}).$

Second assume $\ztl_j(t), \ztl_{N-1-j}(t)$ are defined for all $0 \le j \le i-1$, for some $1 \le i \le [\frac{N-1}{2}]$, then we set 
\begin{equation} \label{eq: ztl i N-1-i}
\forall t \in [0, \frac{\pi}{N}], \;\; \begin{cases}
\ztl_i(t) & = \int_0^t |\zd_i(s)| \,ds + C_i, \\
\ztl_{N-1-i}(t) & = -\int_0^t |\zd_{N-1-i}(s)| \,ds + C_{N-1-i},
\end{cases}
\end{equation}
with the proper $C_i$ and $C_{N-1 -i}$, such that
$$ \ztl_i(0) = \ztl_{N-i}(0), \;\; \ztl_i(\frac{\pi}{N}) =\ztl_{N-1-i}(\frac{\pi}{N}). $$
Notice that for an odd $N$ ($N=2n+1$), when $i = [\frac{N-1}{2}] =n$, $i=N-1-i =n$, so in this case we will just define $\ztl_i(t)$ as in \eqref{eq: ztl i N-1-i} with a $C_i$, such that $\ztl_i(0)= \ztl_{N-i}(0)$.

\begin{figure}
  \centering
  \includegraphics[scale=0.70]{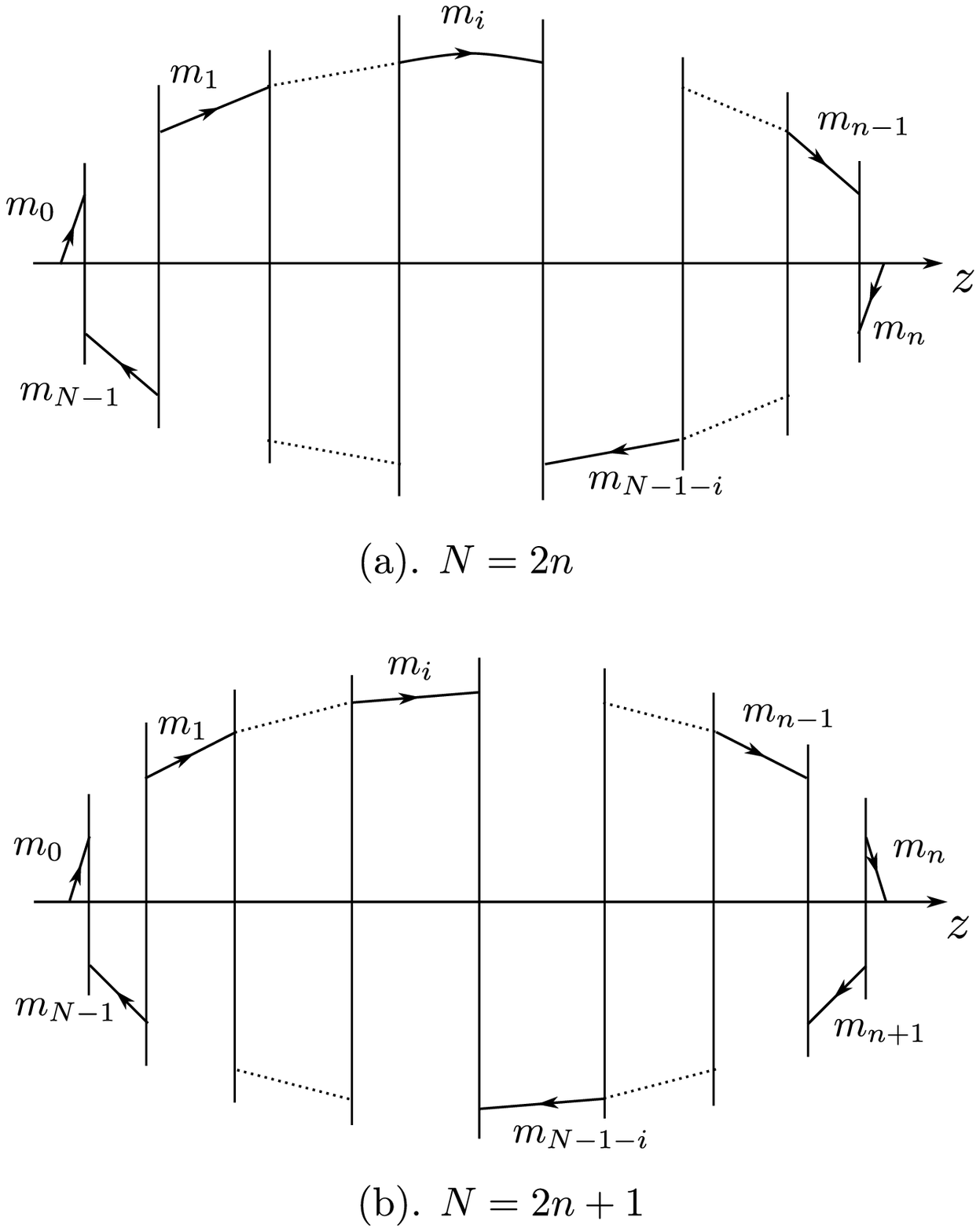}
  \caption{Projection of $\qt$ to the $yz$-plane }
  \label{fig:monotone}
\end{figure}

By the above definition, it is not hard to see $\qt \in \ldn_\xi$ and satisfies property (b). Meanwhile a proper value can always be found for $C_0$, such that \eqref{eq: coercive z} holds for $\qt$. 

With $\qt$ defined as above, the integration of the kinetic energy of each mass is the same as the corresponding one in $q$,
\begin{equation}
\label{eq: qt kinetic} \ey \int_0^{\pi/N} |\qd_i(t)|^2 \,dt = \ey \int_0^{\pi/N} |\dot{\qt}_i(t)|^2 \,dt, \;\; \forall i \in \N.
\end{equation}
Hence to prove property (b), it is enough to show the integration of the potential energy along the path $\qt$ is strictly smaller than along the path $q$, if the results in property (b) do not hold and the same, if they do. To prove this, let $\sg \in \mc{S}_{\N}$ be a permutation on the index set defined by
\begin{equation} \label{eq: sg}
\sg^{-1}(i) = \begin{cases}
2i, \; & \text{ if } 0 \le i \le [\frac{N-1}{2}]; \\
2(N-i)-1, \; & \text{ if } [\frac{N-1}{2}]+1 \le i \le N-1. 
\end{cases}
\end{equation}
Then for any $q^* \in H^1([0, \pi/N], \rr^{3N})$, its action value can be written as 
$$ \A(q^*; \frac{\pi}{N})= \sum_{k =0}^{N-1} \A^k(q^*; \frac{\pi}{N}),$$
where 
$$ \A^k(q^*; \frac{\pi}{N})=  \begin{cases} \ey \int_0^{\frac{\pi}{N}} |\dot{q}^*_{\sg(0)}(t)|^2 \,dt, \; & \text{ if } k=0, \\
 \int_0^{\frac{\pi}{N}} \ey |\dot{q}^*_{\sg(k)}|^2 + \sum_{i =0}^{k-1} \frac{1}{|q^*_{\sg(i)}-q^*_{\sg(k)}|} \,dt, \; & \text{ if } 1 \le k \le N-1.
\end{cases}
$$

By the definition of $\qt$, it is obvious $\A^0(\qt; \frac{\pi}{N})= \A^0(q; \frac{\pi}{N})$. We claim 
$$ \A^k(\qt; \frac{\pi}{N}) \le \A^k(q; \frac{\pi}{N}), \;\; \forall k \in \N \setminus \{0 \},$$
 and the above inequalities are equalities if and only if one of the following holds: 
\begin{equation}
\label{eq: increasing} \forall 0 \le i \le k, \; \begin{cases}
\zd_{\sg(i)}(t) \ge 0, \; \forall a.e. \; t \in [0, \frac{\pi}{N}], \; & \text{ if } i \text{ is } even; \\
\zd_{\sg(i)}(t) \le 0, \; \forall a.e. \; t \in [0, \frac{\pi}{N}], \; & \text{ if } i \text{ is } odd, 
\end{cases}
\end{equation}

\begin{equation}
\label{eq: decreasing} \forall 0 \le i \le k, \; \begin{cases}
\zd_{\sg(i)}(t) \le 0, \; \forall a.e. \; t \in [0, \frac{\pi}{N}], \; & \text{ if } i \text{ is } even; \\
\zd_{\sg(i)}(t) \ge 0, \; \forall a.e. \; t \in [0, \frac{\pi}{N}], \; & \text{ if } i \text{ is } odd, 
\end{cases}
\end{equation}

First let's prove the claim for $k=1$. Recall that by \eqref{eq: sg}, $\sg(0)=0$ and $\sg(1)=N-1$. By the definition of $\qt(t)$, $ \forall t \in [0, \pi/N]$, 
\begin{align*}
|x_{\sg(0)}(t)- x_{\sg(1)}(t)| & = |\xt_{\sg(0)}(t) - \xt_{\sg(1)}(t)|, \\
|y_{\sg(0)}(t)- y_{\sg(1)}(t)| & = |\yt_{\sg(0)}(t)-\yt_{\sg(1)}(t)|,
\end{align*}
Meanwhile $z_{\sg(0)}(\frac{\pi}{N})=z_{\sg(1)}(\frac{\pi}{N})$ and $\ztl_{\sg(0)}(\frac{\pi}{N})=\ztl_{\sg(1)}(\frac{\pi}{N})$ imply
\begin{equation}
\label{eq: diff potential z ztl} \begin{split}
|z_{\sg(0)}(t) - z_{\sg(1)}(t)| & = |z_{\sg(0)}(t)- z_{\sg(0)}(\pi/N) + z_{\sg(1)}(\pi/N) - z_{\sg(1)}(t) | \\
                                & \le |z_{\sg(0)}(t) - z_{\sg(0)}(\pi/N)| + |z_{\sg(1)}(\pi/N) - z_{\sg(1)}(t)| \\ 
                                & \le \int_t^{\pi/N} |\zd_{\sg(0)}(s)| \,ds + \int_t^{\pi/N} |\zd_{\sg(1)}(s)| \,ds \\
                                &= \ztl_{\sg(0)}(\pi/N)-\ztl_{\sg(0)}(t)+ \ztl_{\sg(1)}(t)-\ztl_{\sg(1)}(\pi/N) \\
                                & = \ztl_{\sg(1)}(t) - \ztl_{\sg(0)}(t).
\end{split}
\end{equation}
In particular the inequalities in \eqref{eq: diff potential z ztl} are equalities for any $t \in [0, \frac{\pi}{N}]$, if and only if \eqref{eq: increasing} or \eqref{eq: decreasing} holds. Together with \eqref{eq: qt kinetic}, it proves our claim for $k=1$.

By induction, assume our claim holds for all $1 \le i \le k-1$ for some $1 \le k \le N-1$, we will show it must hold for $k$ as well. The details for $k$ being even will be given below, the proof for $k$ being odd is similar and will be omitted. Again by the definition of $\qt$, for any $t \in [0, \pi/N]$ and $1 \le i \le k-1$,
\begin{align*}
|x_{\sg(i)}(t)- x_{\sg(k)}(t)| & = |\xt_{\sg(i)}(t) - \xt_{\sg(k)}(t)|, \\
|y_{\sg(i)}(t)- y_{\sg(k)}(t)| & = |\yt_{\sg(i)}(t)-\yt_{\sg(k)}(t)|. 
\end{align*}
Meanwhile using \eqref{eq: symmetric boundary moment 1}, when $i$ is even, we have 
\begin{align*}
|&z_{\sg(i)}(t)- z_{\sg(k)}(t)| \\
&= | z_{\sg(i)}(t) - \sum_{j=i}^{k-2} \big( z_{\sg(j)}(\frac{\pi}{N}) - z_{\sg(j+1)}(\frac{\pi}{N}) +z_{\sg(j+1)}(0) - z_{\sg(j+2)}(0) \big) -z_{\sg(k)}(t)| \\
& \le |z_{\sg(i)}(t)-z_{\sg(i)}(\frac{\pi}{N})| + \sum_{j=i+1}^{k-1}|z_{\sg(j)}(\frac{\pi}{N})-z_{\sg(j)}(0)| + |z_{\sg(k)}(0)-z_{\sg(k)}(t)| \\
& \le \int_t^{\pi/N} |\xd_{\sg(i)}(s)| \,ds + \sum_{j=i+1}^{k-1} \int_0^{\pi/N} |\zd_{\sg(j)}(s)| \,ds + \int_0^t |\zd_{\sg(k)}(s)| \,ds \\
& = |\ztl_{\sg(i)}(t)-\ztl_{\sg(i)}(\frac{\pi}{N})| + \sum_{j=i+1}^{k-1}|\ztl_{\sg(j)}(\frac{\pi}{N})-\ztl_{\sg(j)}(0)| + |\ztl_{\sg(k)}(0)-\ztl_{\sg(k)}(t)| \\
& = |\ztl_{\sg(i)}(t) - \ztl_{\sg(k)}(t)|,
\end{align*}
and when $i$ is odd, we have 
\begin{align*}
|&z_{\sg(i)}(t)- z_{\sg(k)}(t)| \\
&= | z_{\sg(i)}(t) - \sum_{j=i}^{k-2} \big( z_{\sg(j)}(0) - z_{\sg(j+1)}(0) +z_{\sg(j+1)}(\frac{\pi}{N}) - z_{\sg(j+2)}(\frac{\pi}{N}) \big) -z_{\sg(k)}(t)| \\
& \le |z_{\sg(i)}(t)-z_{\sg(i)}(0)| + \sum_{j=i+1}^{k-1}|z_{\sg(j)}(0)-z_{\sg(j)}(\frac{\pi}{N})| + |z_{\sg(k)}(0)-z_{\sg(k)}(t)| \\
& \le \int_0^t |\xd_{\sg(i)}(s)| \,ds + \sum_{j=i+1}^{k-1} \int_0^{\pi/N} |\zd_{\sg(j)}(s)| \,ds + \int_0^t |\zd_{\sg(k)}(s)| \,ds \\
& = |\ztl_{\sg(i)}(t)-\ztl_{\sg(i)}(0)| + \sum_{j=i+1}^{k-1}|\ztl_{\sg(j)}(\frac{\pi}{N})-\ztl_{\sg(j)}(0)| + |\ztl_{\sg(k)}(0)-\ztl_{\sg(k)}(t)| \\
& = |\ztl_{\sg(i)}(t) - \ztl_{\sg(k)}(t)|.
\end{align*}
Again the above inequalities are equalities for all $t \in [0, \frac{\pi}{N}]$, if and only if \eqref{eq: increasing} or \eqref{eq: decreasing} holds. Together with \eqref{eq: qt kinetic}, it implies our claim for $k$. This finishes our proof of property (b). 

For property (c), notice that if we rotate a path from $\ldn_{\xi}$ around the $y$-axis by an arbitrary angle, it will still belong to $\ldn_{\xi}$ and its action value will not be changed. Recall that the $D_N$-symmetry implies $q_0(0)$ and $q_0(\pi)$ belongs to the $xz$-plane, so after rotate the entire path by a proper angle around the $y$-axis, we can make $z_0(0)= z_0(\pi)$. Then property (b) implies $z_0(t)$ is a constant for $t \in [0, \pi]$, which means the entire path is contained inside a plane parallel to the $xy$-plane and property (c) follows immediately. 
\end{proof}

Property (a) and (b) in the above lemma indicate the action minimizer satisfies some monotone property along the $x$ and $z$-axis. Under some extra conditions, we will show the monotonicity is in fact strict. 
\begin{lm}
\label{lm: strict monotone fixed} For any $\xi \in \Xi_N$, if $q \in \ldn_\xi$ is a minimizer of $\ao$ among all loops in $\ldn_{\xi}$ satisfying \eqref{eq: coercive x} and \eqref{eq: coercive z}, then the following results hold. 
\begin{enumerate}
\item[(a).] If $x_0(t)$ is not a constant for all $t \in \rr$, then $q$ is a collision-free $2\pi$-periodic solution of \eqref{eq:nbody}. Moreover $\xd_0(t)=0$, if and only if $t \in \{0, \pi\}$ and 
$$  \begin{cases}
\xd_0(t) >0 \; (\text{resp.} <0), & \; \text{ if } t \in (0, \pi), \\
\xd_0(t) <0 \; (\text{resp.} >0), & \; \text{ if } t \in (\pi, 2\pi). 
\end{cases} $$
\item[(b).] If $z_0(t)$ is not a constant for all $t \in \rr$, then $q$ is a collision-free $2\pi$-periodic solution of \eqref{eq:nbody}. Moreover $\zd_0(t)=0$, if and only if $t \in \{0, \pi\}$ and 
$$  \begin{cases}
\zd_0(t) >0 \; (\text{resp.} <0), & \; \text{ if } t \in (0, \pi), \\
\zd_0(t) <0 \; (\text{resp.} >0), & \; \text{ if } t \in (\pi, 2\pi). 
\end{cases} $$
\end{enumerate}
\end{lm}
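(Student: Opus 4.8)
The plan is to prove part (b); part (a) follows by the same argument with the roles of $z$ and $x$ exchanged (keeping in mind that the deformation lemmas of Section~\ref{sec: lemmas} work along $\e_3$ for all $\om$, and here $\om=0$ so the $\e_1$ versions are available too). By Lemma~\ref{lm: monotone fixed}(b), a minimizer $q$ already satisfies the weak monotonicity: $z_0(t)$ is monotone on $[0,\pi]$, and by the $D_N$-symmetry \eqref{eq: symm q0} it is also monotone (in the opposite sense) on $[\pi, 2\pi]$, with $q_0(0), q_0(\pi)$ in the $xz$-plane. So $z_0$ is, say, nondecreasing on $[0,\pi]$; I must upgrade this to: $\zd_0 > 0$ on the open interval $(0,\pi)$, and $q$ is a genuine (collision-free) solution.

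The first step is to rule out collisions. Suppose $q$ has a collision at some moment $t_0$. Using the $D_N$-symmetry one reduces to a collision in the fundamental domain $[0, \pi/N]$; by the monotonicity already in hand together with the structure of the symmetric boundary conditions \eqref{eq: symmetric boundary moment 1}–\eqref{eq: symmetric boundary moment 2}, all masses are spread out along the $z$-axis in a way compatible with Definition~\ref{dfn:sepa}, so the collision orbit is $z$-separated by the colliding pair (choosing $\I_0, \I_1$ according to which side of the colliding pair each remaining mass sits on). Now distinguish the asymptotic direction $\tht_j^+$ of the colliding pair given by Proposition~\ref{prop:angle}. If $\tht_j^+ \ne \pm\pi/2 \pmod{\pi}$ then Lemma~\ref{lm:dfm1} produces a local deformation strictly decreasing the action while staying in $\ldn_\xi$ and respecting \eqref{eq: coercive x}, \eqref{eq: coercive z} — contradiction. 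If $\tht_j^+ = \pm\pi/2 \pmod\pi$, then either $z_j(T) \ne z_k(T)$ at the endpoints of the separating interval, and Lemma~\ref{lm:dfm2} gives the contradiction directly; or $z_j(T) = z_k(T)$, in which case the colliding pair does not move apart in $z$ at all, the monotonicity forces the local piece of $q$ to lie in a plane parallel to the $xy$-plane, and then Lemma~\ref{lm:dfm4} applies to push the masses apart in $z$ and decrease the action. In every case we contradict minimality, so $q$ is collision-free; being a collision-free critical point of $\A$ in $\ldn_\xi$, Palais' principle makes it a smooth $2\pi$-periodic solution of \eqref{eq:nbody}.

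The second step is the strict monotonicity. Since $q$ is now a smooth solution and $z_0$ is nonconstant and monotone on $[0,\pi]$, I argue that $\zd_0$ cannot vanish on the open interval. Suppose $\zd_0(t_1) = 0$ for some $t_1 \in (0,\pi)$. Monotonicity and $\zd_0(t_1)=0$ give that $t_1$ is an interior extremum of $z_0$; combined with the equation of motion and the choreography relation \eqref{eq: simple choreography solution}, one examines $\ddot z_0(t_1) = \partial_{z_0} U$, whose sign is determined by the relative $z$-positions of the other masses. Using \eqref{eq: symmetric monotone even}/\eqref{eq: symmetric monotone odd}-type consequences of the $D_N$-symmetry (already available from the weak monotonicity), $z_0$ being at an interior max forces all $z_i(t_1)$ to cluster on one side, which makes $\ddot z_0(t_1)$ strictly of one sign, contradicting that it is an interior extremum of a nonconstant monotone function — alternatively, a cleaner route: on any maximal subinterval of $[0,\pi]$ on which $z_0$ is constant, \emph{all} $z_i$ are constant there by the symmetry relations, so $q$ restricted to that subinterval is a planar (parallel to $xy$) solution piece, and then a Lemma~\ref{lm:dfm4}-style variation (or analyticity of solutions of \eqref{eq:nbody} plus unique continuation) rules this out unless $z_0 \equiv$ const on all of $\rr$, which is the excluded case. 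Finally $\zd_0(0) = \zd_0(\pi) = 0$ comes from $q_0(0), q_0(\pi) \in xz$-plane together with the reflection symmetry \eqref{eq: symm q0}, which forces $\zd_0$ to be odd about $t=0$ and about $t=\pi$, giving the stated sign pattern on $(\pi, 2\pi)$ as well.

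The main obstacle I anticipate is the collision-ruling step in the degenerate direction $\tht_j^+ = \pm\pi/2 \pmod\pi$: one must verify that the $z$-separation hypotheses of Lemma~\ref{lm:dfm2} genuinely hold for the minimizer — i.e., that the weak monotonicity of Lemma~\ref{lm: monotone fixed} plus the symmetric boundary conditions really do order all the masses along the $z$-axis in the required nested fashion, and that the relevant endpoint inequality $z_j(T) > z_k(T)$ can be arranged (passing to the genuinely-planar sub-case handled by Lemma~\ref{lm:dfm4} when it cannot). The bookkeeping of which mass lands in $\I_0$ versus $\I_1$, and checking the deformed path still satisfies the $D_N$-symmetry and the coercivity normalizations \eqref{eq: coercive x}, \eqref{eq: coercive z}, is where the real care is needed; the rest is a direct application of the Section~\ref{sec: lemmas} toolkit.
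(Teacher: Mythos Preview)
Your assembly of the ingredients is largely right, but the order is inverted relative to the paper and this creates a real gap. The paper (via the more general Lemma~\ref{lm: strict monotone rotate}) proceeds in three stages: (i) collisions in the \emph{open} fundamental domain $(0,\pi/N)$ are excluded by Marchal--Chenciner, since no topological constraint is active there; (ii) with $q$ now a smooth solution on $(0,\pi/N)$, strict monotonicity $\dot z_i \ne 0$ is proved there by a direct variational competitor---if $\dot z_k(t_0)=0$, one shifts $m_k$ down by $\ep^2$ before $t_0$ and up by $\ep^2$ after, connecting linearly on $[t_0-\ep,t_0+\ep]$, and shifts the other masses by $\pm\ep^2$; the kinetic cost is $O(\ep^3)$ (because $\dot z_k(t_0)=0$) while the potential gain is $-C\ep^2$ (from the now strictly increased $|q_0-q_n|$ on a fixed $[0,\dl]$, using $z_n(0)>z_0(0)$ since $z_0$ is nonconstant), contradicting minimality; (iii) only \emph{then}, with strict monotonicity on $(0,\pi/N)$ in hand, does one attack boundary collisions via Lemma~\ref{lm:dfm1} or~\ref{lm:dfm2}, the strict endpoint inequality $z_j(\pi/N)>z_{N-j}(\pi/N)$ required by Lemma~\ref{lm:dfm2} now being guaranteed.

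Your route (collisions first, monotonicity second) runs into trouble at exactly the point you flag. With only the weak monotonicity of Lemma~\ref{lm: monotone fixed}, the hypothesis $z_j(T)>z_k(T)$ of Lemma~\ref{lm:dfm2} can fail: $z_0$ nonconstant on $[0,\pi]$ does not prevent it from being constant on the sub-interval $[(2j-1)\pi/N,(2j+1)\pi/N]$. Your fallback to Lemma~\ref{lm:dfm4} then does not apply, because that lemma requires the \emph{entire} path to lie in a horizontal plane, whereas here only $z_j$ and $z_{N-j}$ are forced to be constant. You also skip the Marchal--Chenciner step for interior collisions; the deformation lemmas of Section~\ref{sec: lemmas} are set up for collisions at the endpoints of an interval with the specific boundary identifications coming from the $D_N$-symmetry, and are not directly available at a generic interior collision moment.

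Separately, your strict monotonicity argument does not close. A monotone nonconstant $z_0$ with $\dot z_0(t_1)=0$ has $t_1$ as a critical point but not an interior extremum, so the ``$\ddot z_0$ has a forced sign'' reasoning does not yield a contradiction. Analyticity only tells you the zeros of $\dot z_0$ are isolated; it does not exclude an isolated zero inside $(0,\pi)$. The claim that $z_0$ constant on a subinterval forces all $z_i$ constant there is also false---the choreography shifts that plateau to \emph{different} subintervals for each $i$. The paper's variational competitor in step~(ii) above is the missing idea.
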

We postpone the proof of the above lemma for a moment, as in Section \ref{sec: rotate} a more general result (Lemma \ref{lm: strict monotone rotate}) will be proven, which includes the above lemma as a special case. As explained in Remark \ref{rem: strict monotone}, property (a) and (b) in the above lemma imply certain strict monotone property of an action minimizer along the $x$ and $z$-direction correspondingly. 
 
Now we are ready to prove Theorem \ref{thm: linear chain} and \ref{thm: linear chain extra sym}. 
\begin{proof} [Proof of Theorem \ref{thm: linear chain}] First we need to show the existence of at least one action minimizer. Choose an arbitrary sequence of loops $\{q^n\}$ from $\ldn_{\xi}$, due to the $D_N$-symmetry, each loop must satisfy \eqref{eq: coercive y}. If \eqref{eq: coercive x} and \eqref{eq: coercive z} hold for each loop as well, then by Poincar\'e's inequality 
\begin{equation}
\int_{0}^{2\pi} |\qd^n_i(t)|^2 \,dt \ge \|q^n_i\|^2_{\hnm}, \;\; \forall i \in \N. 
\end{equation}
As the potential $U$ is never negative, we get $\A(q^n; 2\pi)$ goes to infinity, when $\|q^n\|_{\hnm}$ goes to infinity. Then the existence of a minimizer follows from the lower semi-continuity of $\A$ and a standard argument in calculus of variation. 

Now let $q \in \ldn_{\xi}$ be an action minimizer of $\A$ in $\ldn_{\xi}$. We claim the following can not happen
$$ x_i(t) \equiv \text{Constant}, \; z_i(t) \equiv \text{Constant}, \; \forall t \in [0, \frac{\pi}{N}] \text{ and } \forall i \in \N. $$
Because otherwise all the masses will move inside a straight line parallel to the $y$-axis all the time. Then the symmetric and topological constraints will implies the existence of at least one isolated collision of $q(t)$ for some moment $t$, and by Lemma \ref{lm:dfm4}, we can find another path from $\ldn_{\xi}$ whose action value is strictly smaller than $q$'s, which is absurd. A detailed proof of this can be found in \cite[Appendix]{Y15c}. Let's point out that in the above paper, certain monotone constraints were imposed a priori, and during the proof a lot of effort were made to ensure the monotone constraints were satisfied for the path we found. In current setting, as no monotone constraints were imposed in advance, all these will be unnecessary and the proof can be simplified significantly. We left the details to the reader. 

By the above claim, either $x_0(t)$ or $z_0(t)$ will not be a constant for all $t \in \rr$. Then our result follows directly from Lemma \ref{lm: monotone fixed} and \ref{lm: strict monotone fixed}.
\end{proof}


\begin{proof}[Proof of Theorem \ref{thm: linear chain extra sym}] 
Recall that the $H_N$-symmetry implies each loop from $\Lmd^{H_N}$ must satisfies \eqref{eq: coercive y} and \eqref{eq: coercive z}. Then the first part of the theorem can be proven similarly as Theorem \ref{thm: linear chain}, except some extra care has to been taken when we use Lemma \ref{lm: monotone fixed}. Notice that in the proof of this lemma, a new path $\qt$ was constructed, while such a path always belongs to $\ldn_{\xi}$, it may not belong to $\Lmd^{H_N}_{\xi}$. What's needs to  be done is that after getting the path $\qt$ following the process given in the proof of Lemma \ref{lm: monotone fixed}, we need to rotate it around the $y$-axis by a proper angle and make a linear translation of it along the $z$-axis by a proper constant, so that conditions in \eqref{eq: q0 HN symmetry} will be satisfied. Then the path shall belong to $\Lmd^{H_N}_{\xi}$. 

For the second part, let's assume $x_0(t)$ is not a constant for all $t \in \rr$, then by Lemma \ref{lm: strict monotone fixed}, $\xd_0(t)$ is either always positive or always negative, for all $t \in (0, \pi)$. This is a contradiction to the fact $ x_0(0)= x_0(\pi)$, which is required by the $H_N$-symmetry, see \eqref{eq: q0 HN symmetry}. This means $x_0(t) = \text{Constant}$, for all $t \in \rr$. Meanwhile since $q$ satisfies condition \eqref{eq: coercive x}, such a constant must be zero. 
\end{proof}


\section{Coercivity in rotating frame} \label{sec: coercive}

From now on we will consider our problem in a coordinate frame rotating around the $z$-axis with a constant angular velocity $\om \in \rr$ as explained in Section \ref{sec:intro}. In this section, we will study the coercivity of the action functional $\ao$ in $\ldn_{\xi}$ for different values of $\om$ and $\xi \in \Xi_N$.

\begin{lm}
\label{lm: coercive non integer om} Given an arbitrary sequence $\qn \in \Lmd$ satisfying 
\begin{equation}
\label{eq: coercive zn Lmd} [z_i]:= \frac{1}{2\pi} \int_0^{2\pi} z_i(t) \,dt = 0, \;\; \forall i \in \N,
\end{equation}
for any $\om \in \rr \setminus \zz$, if $\|\qn\|_{\hnm}$ goes to infinity, then $\ao(\qn; 2\pi)$ goes to infinity,.
\end{lm}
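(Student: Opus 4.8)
The plan is to bound the action $\ao(\qn; 2\pi)$ below by a quantity that controls $\|\qn\|_{\hnm}^2$. Since $U \ge 0$, it suffices to show the kinetic part $\int_0^{2\pi} K_{\om}(\qn, \dot{\qn})\,dt$ is coercive on the subspace of loops satisfying the mean-zero conditions \eqref{eq: coercive zn Lmd}. Writing $q_i = (\zt_i, z_i) \in \cc \times \rr$, the kinetic part splits as
$$
\int_0^{2\pi} K_{\om}(\qn,\dot{\qn})\,dt = \ey\sum_{i\in\N}\int_0^{2\pi}\big(|\dot{\zt}^n_i + \J\om\zt^n_i|^2 + |\dot z^n_i|^2\big)\,dt,
$$
so I would treat the planar components $\zt^n_i$ and the vertical components $z^n_i$ separately. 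For the vertical part, Poincar\'e's inequality applies directly: \eqref{eq: coercive zn Lmd} forces $[z^n_i]=0$, hence $\int_0^{2\pi}|\dot z^n_i|^2 \ge c\|z^n_i\|_{\hnm}^2$ for a universal $c>0$. The real work is the planar part.

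First I would expand each $\zt^n_i$ in its Fourier series $\zt^n_i(t) = \sum_{k\in\zz} c_k e^{\J k t}$ (complex coefficients, since $\zt^n_i$ is $\cc$-valued). A direct computation gives
$$
\ey\int_0^{2\pi}|\dot{\zt}^n_i + \J\om\zt^n_i|^2\,dt = \pi \sum_{k\in\zz}(k+\om)^2 |c_k|^2,
$$
while $\|\zt^n_i\|_{\hnm}^2$ is comparable to $\sum_{k\in\zz}(1+k^2)|c_k|^2$. Since $\om \in \rr\setminus\zz$, the quantity $\min_{k\in\zz}(k+\om)^2 = \operatorname{dist}(\om,\zz)^2 =: \mu > 0$ is strictly positive, and moreover $(k+\om)^2 \ge \beta(1+k^2)$ for all $k\in\zz$ with some $\beta = \beta(\om) > 0$ (check the two regimes: for $|k|$ large, $(k+\om)^2/(1+k^2)\to 1$, and for $|k|$ bounded the ratio is a positive minimum of finitely many positive numbers). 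Hence
$$
\ey\int_0^{2\pi}|\dot{\zt}^n_i + \J\om\zt^n_i|^2\,dt \ge \pi\beta \sum_{k\in\zz}(1+k^2)|c_k|^2 \ge c'\|\zt^n_i\|_{\hnm}^2
$$
for a constant $c' = c'(\om)>0$. Note that for the planar components no mean-zero hypothesis is needed, precisely because $\om$ is not an integer (the $k=0$ mode is already controlled by $\om^2>0$); this is exactly where the hypothesis $\om\notin\zz$ enters, and it explains why part (a) of Theorem \ref{thm: coercive rotating non-integer} only requires \eqref{eq: coercive z} and not \eqref{eq: coercive x}.

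Summing over $i\in\N$ and combining the planar and vertical estimates yields
$$
\ao(\qn;2\pi) \ge \int_0^{2\pi}K_{\om}(\qn,\dot{\qn})\,dt \ge c''(\om)\,\|\qn\|_{\hnm}^2
$$
for some $c''(\om)>0$, so $\|\qn\|_{\hnm}\to\infty$ forces $\ao(\qn;2\pi)\to\infty$, as claimed. The only mildly delicate point — and the one I would write out carefully — is the uniform lower bound $(k+\om)^2 \ge \beta(1+k^2)$; everything else is Parseval plus the nonnegativity of $U$. One should also note the estimate is uniform over $i\in\N$, so summing introduces no difficulty.
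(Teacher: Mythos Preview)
Your proposal is correct and follows essentially the same approach as the paper: Fourier expansion of each component, the key observation that $\om\notin\zz$ makes $(k+\om)^2$ uniformly positive over $k\in\zz$, and Poincar\'e for the mean-zero vertical components. The only cosmetic difference is that the paper establishes the planar $H^1$ bound in two steps (first $\|\dot\zt_i+\J\om\zt_i\|_{\lnm}^2\ge C_1\|\zt_i\|_{\lnm}^2$, then $k^2\le C_2(k+\om)^2+C_3$ to control $\|\dot\zt_i\|_{\lnm}^2$), whereas you package both into the single inequality $(k+\om)^2\ge\beta(1+k^2)$ via the large-$|k|$/small-$|k|$ dichotomy; this is a minor streamlining of the same argument.
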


\begin{proof}

Given a loop $q =(\zt_i, z_i)_{i \in \N} \in \Lmd$, written in Fourier series, for each $i \in \N$, 
$$ \zt_i(t) = \sum_{k \in \zz} \hat{\zt}_{i, k} e^{\J kt}, \; \text{ where } \; \hat{\zt}_{i, k}= \frac{1}{2\pi} \int_0^{2\pi} \zt_{i}(t)e^{-\J kt} \,dt; $$
$$ z_i(t) = \sum_{k \in \zz} \hat{z}_{i, k} e^{\J kt}, \; \text{ where }\; \hat{z}_{i, k}= \frac{1}{2\pi} \int_0^{2\pi} z_{i}(t)e^{-\J kt} \,dt.  $$
Then 
$$ \| \zt_i\|^2_{\lnm} = \int_0^{2\pi} |\zt_i(t)|^2 \,dt = 2\pi \sum_{k \in \zz} |\hat{\zt}_{i, k}|^2,$$
$$  \| z_i\|^2_{\lnm} = \int_0^{2\pi} |z_i(t)|^2 \,dt =  2 \pi \sum_{k \in \zz} |\hat{z}_{i,k}|^2, $$
$$ \| \dot{\zt}_i\|^2_{\lnm} = \int_0^{2\pi} |\dot{\zt}_i(t)|^2 \,dt = 2\pi \sum_{k \in \zz^*} k^2 |\hat{\zt}_{i, k}|^2,$$
where $\zz^*= \zz \setminus \{0 \}$. As a result, 
\begin{equation}
\| \zt_i\|^2_{\hnm} = \| \zt_i\|^2_{\lnm} + \| \dot{\zt}_i\|^2_{\lnm}= 2\pi \sum_{k \in \zz} (k^2 +1) |\hat{\zt}_{i,k}|^2. 
\end{equation}
Meanwhile $\dot{\zt}_i(t)+ \J \om \zt_i(t) = \sum_{k \in \zz} \J (\om +k) \hat{\zt}_{i, k} e^{\J kt}$. Then 
\begin{equation} \label{eq: K om L2 norm}
\|\dot{\zt}_i+\J \om \zt_i\|^2_{\lnm} = 2\pi \sum_{k \in \zz} (\om + k)^2 |\hat{\zt}_{i, k}|^2. 
\end{equation}
Since $\om \notin \zz$, $\min_{k \in \zz} (\om+k)^2 \ge C_1 >0$, where $C_1$ only depends on $\om$. Hence  
\begin{equation} \label{eq: K om zt ge zt} 
\|\dot{\zt}_i+\J \om \zt_i\|^2_{\lnm} \ge 2\pi C_1  \sum_{k \in \zz} |\hat{\zt}_{i, k}|^2 = C_1 \| \zt_i\|^2_{\lnm}. 
\end{equation}

To get a similar estimate for $\dot{\zt}_i$, notice that for each $k \in \zz$, 
\begin{equation*}
\begin{split}
k^2 & \le (|k+\om| + |\om|)^2 = (k+\om)^2 + 2 |\om||k + \om| + \om^2 \\
    & \le (k+\om)^2 + 2|\om|(|k+\om|^2 +1) + \om^2 \le C_2(k+\om)^2 + C_3,
\end{split}
\end{equation*}
where the positive constants $C_2, C_3$ only depend on $\om$. Therefore 
\begin{equation} \label{eq: K om zt ge dot zt}
\begin{split}
\|\dot{\zt}_i\|^2_{\lnm} & = 2\pi \sum_{i \in \zz^*} k^2 |\hat{\zt}_{i, k}|^2 \le 2\pi C_2 \sum_{k \in \zz} (k+\om)^2 |\hat{\zt}_{i, k}|^2 + 2\pi C_3 \sum_{k \in \zz} |\hat{\zt}_{i, k}|^2 \\
& \le C_4 \|\dot{\zt}_i + \J \om \zt_i\|^2_{\lnm},
\end{split}
\end{equation} 
where the last inequality follows from \eqref{eq: K om L2 norm} and \eqref{eq: K om zt ge zt}. Together \eqref{eq: K om zt ge zt} and \eqref{eq: K om zt ge dot zt} imply 
\begin{equation}
\label{eq: K om zt ge H1 norm zt} \|\dot{\zt}_i+\J \om \zt_i\|^2_{\lnm} \ge C_5 \| \zt_i \|_{\hnm}^2. 
\end{equation}

For each $z_i(t)$, $i \in \N$, notice that condition \eqref{eq: coercive zn Lmd} implies $\hat{z}_{i, 0}=0$, then 
$$ \|\zd_i\|^2_{\lnm} = 2\pi \sum_{k \in \zz^*} k^2 |\hat{z}_i|^2 \ge 2\pi \sum_{k \in \zz^*} |\hat{z}_i|^2 = \| z_i\|^2_{\lnm}, $$
which means 
\begin{equation}
 \label{eq: z dot L2 ge z H1}\|\zd_i\|_{\lnm} \ge \ey \| z_i \|^2_{\hnm}. 
 \end{equation} 
Since $U(q)$ is always positive, \eqref{eq: K om zt ge H1 norm zt} and \eqref{eq: z dot L2 ge z H1} imply
$$ \ao(q; 2\pi) \ge \ey \int_0^{2\pi} \sum_{i \in \N} (|\dot{\zt}_i + \J \om \zt_i|^2 + |\dot{z}_i|^2) \,dt \ge C\|q\|^2_{\hnm},$$
for some constant $C$ only depending on $\om$. This finishes our proof.
\end{proof}

With the above lemma, we can give a proof of Theorem \ref{thm: coercive rotating non-integer}
\begin{proof} 

[\textbf{Theorem} \ref{thm: coercive rotating non-integer}] (a). If $\om \in [0, N] \setminus \zz$, by Lemma \ref{lm: coercive non integer om},  $\ao$ is coercive among all loops in $\ldn_{\xi}$ satisfying \eqref{eq: coercive z}, then the desired property follows directly from the lower semi-continuity of $\ao$ and a standard argument in calculus of variation. 

(b). If $\om =0$, the frame is fixed and the result has already been proven in Theorem \ref{thm: linear chain}. If $\om = N$, then at the moment $t=\pi/N$ the $x$ and $y$-axis come back to their original positions at the moment $t=0$, but with reversed directions. Since $[0, \pi/N]$ is a fundamental domain of the $D_N$-equivalent loops in $\ldn$, $q \in \ldn_{\xi}$ if and only if $e^{\J Nt}q \in \ldn_{\xi^*}$ with $\xi^*$ defined in \eqref{eq: xi star}. As a result, $q$ is a minimizer of $\A_{N}$ in $\ldn_{\xi}$ if and only if $e^{\J N t}q$ is a minimizer of $\A$ in $\ldn_{\xi^*}$. As the later case is a minimization problem in the non-rotating frame, the desired result again follows from Theorem \ref{thm: linear chain}. 
\end{proof}

To deal with the cases that $ \om \in (0, N) \cap \zz$, we need another lemma. Let $\zz_{N}=\langle g | \; g^N =1 \rangle$ be the cyclic group of order $N$ with the actions of $g$ defined as in \eqref{eq: DN}. Then each $q \in \lzn$ is a \textbf{simple choreographic loop}, i.e.
\begin{equation}
\label{eq: simple choreography path} q_i(t)= q_0(t + i \frac{2\pi}{N}), \;\; \forall t \in \rr, \; \forall i \in \N. 
\end{equation}

\begin{lm}
\label{lm: no coercive} For any $k \in [1, N-1] \cap \zz$, let  $\qn =(\zt^n_i, z^n_i)_{i \in \N} \in \lzn$ be a sequence of loops satisfying condition \eqref{eq: coercive z}. If $\|\qn\|_{\hnm} \to \infty$ and $\A_k(\qn; 2\pi)$ remain bounded, when $n \to \infty$, then the following properties hold. 
\begin{enumerate}
\item[(a).] $\| z_i^n\|_{\hnm}$ has a finite upper bound independent of $i$ and $n$. 
\item[(b).] $|\zt_i(t)| \to \infty$ uniformly on $t \in \rr$, for each $i \in \N$.
\item[(c).] After passing $q^n$ to a subsequence, for each $i \in \N$, there exists a $u_i \in \cc$ with $|u_i|=1$, such that $\frac{\ztn_i(t)}{|\ztn_i(t)|}$ converges uniformly to $e^{-\J k t}u_i$ on $t \in \rr$ correspondingly. \end{enumerate}

\end{lm}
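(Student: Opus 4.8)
The plan is to exploit the simple-choreography structure \eqref{eq: simple choreography path}, which forces $\|\zt^n_i\|_{\hnm}$, $\|z^n_i\|_{\hnm}$ to be independent of $i$, so that $\|\qn\|_{\hnm} \to \infty$ is equivalent to $\|\zt^n_0\|_{\hnm} + \|z^n_0\|_{\hnm} \to \infty$. First I would isolate the $z$-part: since $\A_k(\qn; 2\pi) = \int_0^{2\pi}\sum_{i\in\N}\big(\ey|\dot\zt^n_i + \J k \zt^n_i|^2 + \ey|\dot z^n_i|^2\big)\,dt + \int_0^{2\pi} U(\qn)\,dt$ and $U \ge 0$, boundedness of $\A_k(\qn;2\pi)$ gives a uniform bound on $\int_0^{2\pi}|\dot z^n_i|^2\,dt$; combined with the zero-average condition \eqref{eq: coercive z} and Poincar\'e's inequality $\|\dot z^n_i\|^2_{\lnm} \ge \ey\|z^n_i\|^2_{\hnm}$ (as in \eqref{eq: z dot L2 ge z H1}), this yields property (a). It then follows that $\|\zt^n_0\|_{\hnm} \to \infty$.

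Next, for property (b), I would examine the Fourier expansion $\zt^n_0(t) = \sum_{m\in\zz}\hat\zt^n_{0,m}e^{\J mt}$. The quadratic form $\|\dot\zt^n_0 + \J k\zt^n_0\|^2_{\lnm} = 2\pi\sum_m (m+k)^2|\hat\zt^n_{0,m}|^2$ has a zero at $m=-k$ (here $k\in[1,N-1]\cap\zz$), and $(m+k)^2 \ge 1$ for all $m \ne -k$. Hence boundedness of the action forces $\sum_{m\ne -k}|\hat\zt^n_{0,m}|^2$ to stay bounded, so the divergence $\|\zt^n_0\|_{\hnm}\to\infty$ must come entirely from the resonant coefficient: $|\hat\zt^n_{0,-k}| \to \infty$. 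Writing $\zt^n_0(t) = \hat\zt^n_{0,-k}e^{-\J kt} + w^n(t)$ with $\|w^n\|_{\hnm}$ bounded (hence $\|w^n\|_{C^0}$ bounded by Sobolev embedding in dimension one), we get $|\zt^n_0(t)| \ge |\hat\zt^n_{0,-k}| - \|w^n\|_{C^0} \to \infty$ uniformly in $t$, and the same for each $\zt^n_i$ by the choreography relation. This also gives (c): set $u_0^n := \hat\zt^n_{0,-k}/|\hat\zt^n_{0,-k}| \in \mathbb{S}^1$; passing to a subsequence $u_0^n \to u_0$ with $|u_0|=1$, and then $\frac{\zt^n_0(t)}{|\zt^n_0(t)|} = \frac{u_0^n e^{-\J kt} + w^n(t)/|\hat\zt^n_{0,-k}|}{|u_0^n e^{-\J kt} + w^n(t)/|\hat\zt^n_{0,-k}||} \to u_0 e^{-\J kt}$ uniformly; for general $i$ the choreography relation \eqref{eq: simple choreography path} gives $u_i = u_0 e^{-\J k\frac{2\pi i}{N}}$ (absorbing a phase), and the uniform convergence transfers verbatim.

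The main obstacle I anticipate is making the convergence in (c) genuinely \emph{uniform} rather than merely $L^2$ or pointwise, and handling the ratio near points where the bounded remainder $w^n$ is comparable in size to — but never as large as — the growing resonant term. The key is that $|\zt^n_0(t)|$ is bounded below by $|\hat\zt^n_{0,-k}| - \|w^n\|_{C^0}$, which tends to infinity uniformly in $t$; dividing numerator and denominator by $|\hat\zt^n_{0,-k}|$ turns the error into $\|w^n\|_{C^0}/|\hat\zt^n_{0,-k}| \to 0$, so the normalized map converges uniformly with an explicit rate. A secondary bookkeeping point is that the choreography relation and the $D_N$-action index-shift $\sg(g)=(0,1,\dots,N-1)$ must be tracked so that the limit directions $u_i$ are consistent across $i$; this is routine but should be stated carefully.
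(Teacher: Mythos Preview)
Your approach is correct and somewhat cleaner than the paper's, though the underlying idea is the same. For (a) you do exactly what the paper does. For (b) and (c) the paper introduces $\eta^n_0(t) = e^{\J kt}\zt^n_0(t)$ and proceeds by a chain of contradiction arguments using Cauchy--Schwarz: first $\max_t |\eta^n_0(t)| \to \infty$, then $\min_t |\eta^n_0(t)| \to \infty$ (since $\|\dot\eta^n_0\|_{\lnm}$ is bounded), and finally that any uniform subsequential limit of $\eta^n_0/|\eta^n_0|$ must be constant. Your Fourier decomposition $\zt^n_0 = \hat\zt^n_{0,-k}e^{-\J kt} + w^n$ is the spectral version of the same move (the resonant coefficient $\hat\zt^n_{0,-k}$ is precisely the zeroth Fourier mode of the paper's $\eta^n_0$), but it is more constructive: you get $|\zt^n_0(t)| \to \infty$ and the direction limit in one step, with an explicit rate, rather than through three contradictions. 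The paper's softer argument avoids Fourier bookkeeping; yours avoids the slightly delicate passage where the paper asserts without proof that $\eta^n_0/|\eta^n_0|$ has a uniformly convergent subsequence.

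One small point to tighten: from $\sum_m (m+k)^2|\hat\zt^n_{0,m}|^2$ bounded you correctly extract $\sum_{m\ne -k}|\hat\zt^n_{0,m}|^2$ bounded, but that is only an $L^2$ bound on $w^n$, not the $H^1$ bound you then invoke for Sobolev embedding. You need the stronger comparison $1+m^2 \le C_k (m+k)^2$ for all $m \ne -k$ (elementary, since $(m+k)^2 \ge 1$ there and $m^2 \le 2(m+k)^2 + 2k^2$); this is exactly the estimate \eqref{eq: K om zt ge dot zt} from the proof of Lemma~\ref{lm: coercive non integer om}, so just cite it. Once $\|w^n\|_{\hnm}$ is bounded, both $|\hat\zt^n_{0,-k}| \to \infty$ and $\|w^n\|_{C^0}$ bounded follow, and the rest of your argument goes through.
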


\begin{proof}
We set $e^{\J kt}q^n =(\eta^n_i, z^n_i)_{i \in \N}=(e^{\J kt} \zt^n_i, z^n_i)_{i \in \N}$. Notice that $e^{\J kt}q^n$ is $2\pi$-periodic, as $k \in \zz$. 

(a). If $q \in \lzn$ satisfies condition \eqref{eq: coercive z}, by \eqref{eq: simple choreography path}, it must satisfies condition \eqref{eq: coercive zn Lmd} as well. Meanwhile the proof of Lemma \ref{lm: coercive non integer om}, \eqref{eq: z dot L2 ge z H1} holds here as well. Then property (a) follows directly from the inequality
$$ \A_k(q; 2\pi) \ge \ey \int_0^{2\pi} \sum_{i \in \N} |\zd_i|^2 \,dt. $$ 

(b). By \eqref{eq: simple choreography path}, it is enough to prove the result for $i =0$. Since $|\eta_0^n(t)|=|\zt_0^n(t)|$ and is $2\pi$-periodic, it is enough to show
\begin{equation} \label{eq; etn min unbound}
\min_{t \in [0, 2\pi]} |\etn_0(t)| \to \infty, \text{ as } n \to \infty. 
\end{equation}
First we claim the following weaker result holds
\begin{equation} \label{eq; etn max unbound}
\max_{t \in [0, 2\pi]} |\etn_0(t)| \to \infty, \text{ as } n \to \infty. 
\end{equation}
Assume \eqref{eq; etn max unbound} does not hold, then
\begin{equation}
\label{eq: etn 0 bound} \max_{ t \in [0, 2\pi]} |\etn_i(t)| =  \max_{ t \in [0, 2\pi]}  |\ztn_i(t)| \le C_2, \; \; \forall i \in \N. 
\end{equation}
Combining this with property (a) and the fact that $\| \qn \|_{\hnm} \to \infty$, it implies $\| \dot{\ztn_0}\|_{\lnm} \to \infty$. 

Meanwhile by the definition of $\etn_0(t)$, $\dot{\etn_0}(t) = e^{\J k t}(\dot{\ztn_0}(t) + \J k \ztn_0(t))$, so
\begin{equation}
\label{eq: etan dot ge} |\dot{\etn_0}(t)| \ge |\dot{\ztn_0}(t)| - k |\ztn_0(t)| \ge |\dot{\ztn_0}(t)|-k C_2. 
\end{equation}
where the last inequality follows from \eqref{eq: etn 0 bound}. Then by Cauchy-Schwartz inequality,
\begin{equation} \begin{split}
\int_0^{2\pi} |\dot{\etn_0}(t)|^2 \,dt & \ge \int_0^{2\pi} |\dot{\ztn_0}(t)|^2 \,dt -2k C_2 \int_0^{2\pi} |\dot{\ztn_0}(t)| \,dt - 2\pi k^2 C_2^2 \\
 &\ge \int_0^{2\pi} |\dot{\ztn_0}|^2 \,dt - C_3 \left(\int_0^{2\pi} |\dot{\ztn_0}|^2 \,dt \right)^{\ey} - C_4.
\end{split}
\end{equation}
where $C_3, C_4$ are positive constants independent of $n$. Since $\| \dot{\ztn_0}\|_{\lnm} \to \infty$, it implies $\|\dot{\etn_0}\|_{\lnm} \to \infty$. Then
\begin{equation}
 \label{eq; A om unbound} \A_k(\qn; 2\pi) = \A(e^{\J kt} \qn; 2\pi) \ge \ey \int_0^{2\pi} \sum_{i \in \N} |\dot{\etn_i}|^2 \,dt = \frac{N}{2} \int_0^{2\pi} |\dot{\etn_0}|^2 \,dt \to \infty.
 \end{equation} 
This contradicts the condition that $\A_{k}(\qn; 2\pi)$ remains bounded and proves \eqref{eq; etn max unbound}.

Meanwhile by Cauchy-Schwartz inequality
\begin{equation} \label{eq: eta dot max min}
\int_0^{2\pi} |\dot{\etn_0}|^2 \,dt \ge \frac{1}{2\pi} \left( \int_0^{2\pi} |\dot{\etn_0}| \,dt \right)^2 \ge \frac{1}{2\pi} (\max_{t \in [0, 2\pi]} |\etn_0(t)| - \min_{t \in [0, 2\pi]}|\etn_0(t)|)^2. 
\end{equation}
Assume \eqref{eq; etn min unbound} does not hold, then \eqref{eq; etn max unbound} and \eqref{eq: eta dot max min} again imply \eqref{eq; A om unbound}, which is a contradiction as we just explained. This finishes our proof of property (b). 

(c). Like property (b), we will just give the proof for $i =0$. Since $\etn_0(t)$ is $2\pi$-periodic, after passing to a subsequence, $\frac{\etn_0(t)}{|\etn_0(t)|}$ converges uniformly to a periodic function $v: \rr/2\pi \zz \to \cc$ satisfying $|v(t)|=1$, for any $t$. 

We claim $v(t) \equiv v(0)$, for all $t$. Otherwise there is a $t_0 \in (0, 2\pi)$, such that $v(t_0) \ne v(0)$. Together with the result just proved in property (b), it implies $|\etn_0(t_0)-\etn_0(0)| \to \infty$. Then by Cauchy-Schwartz inequality 
$$ \int_0^{2\pi} |\dot{\etn_0}|^2 \,dt \ge \frac{1}{2\pi} \left( \int_0^{2\pi} |\dot{\etn_0}| \,dt \right)^2 \ge \frac{1}{2\pi}|\etn_0(t_0)-\etn_0(0)|^2 \to \infty. $$ 
The rest of the claim follows from similar arguments as in property (b). 

Let $u_0=v(0)$, then $\frac{\etn_0(t)}{|\etn_0(t)|}$ converges uniformly to $u_0$ on $\rr$. Meanwhile as $\etn_0(t) = e^{\J kt}\ztn_0(t)$, we have $\frac{\ztn_0(t)}{|\ztn_0(t)|}$ converges uniformly to $e^{-\J kt} u_0$ on $\rr$.

\end{proof}

Now we finish this section with a proof of Theorem \ref{thm: coercive rotating integer}. 
\begin{proof}

[\textbf{Theorem} \ref{thm: coercive rotating integer}] (a). It will be enough to prove the coercive condition holds. By a contradiction argument, let's this does not hold, then there is a sequence $\qn=(\zt^n_i, z^n_i)_{i \in \N} \in \ldn_{\xi}$ satisfying \eqref{eq: coercive z}, such that $ \|\qn \|_{\hnm} \to \infty$ and $\A_k(\qn; 2\pi) < C $, for some finite constant $C$ independent of $n$. By Lemma \ref{lm: no coercive}, after passing $\qn$ to a subsequence, for each $i \in \N$, there exists a constant $u_i \in \cc$ with $|u_i|=1$ and a function $z_i \in H^1(\rr /2\pi \zz, \rr)$, such that $\frac{\ztn_i(t)}{|\ztn_i(t)|}$ converges uniformly to $e^{-\J k t} u_i$ and $z^n_i(t)$ converges uniformly to $z_i(t)$ on $t \in \rr$. 

Since $q^n(t)$ converges uniformly to $(e^{-\J k t}u_i, z_i(t))_{i \in \N}$ and $\qn \in \ldn_{\xi}$ for all $n$, the limiting loop $(e^{-\J k t}u_i, z_i(t))_{i \in \N}$ must belong to $\ldn_{\xi}$ as well. By the definition of the $D_N$-symmetry, $(e^{-\J kt}u_i, 0)_{i \in \N}$, which is the projection of the limiting loop, is contained in $\Lmd^{D_N}$. Meanwhile since the $\xi$-topological constraints are only imposed on the $y$-component, it is easy to see $(e^{-\J kt}u_i, 0)_{i \in \N}$ satisfies the $\xi$-topological constraints as well, so it belongs to $\ldn_{\xi}$. 

Notice that for $(e^{-\J kt}u_i, 0)_{i \in \N}$ to satisfy the $\zz_N$-symmetry, $(u_i)_{i \in \N}$ must form a regular $N$-gon in the $xy$-plane with the center of mass at the origin. Then to further satisfy the $D_N$-symmetry, the location of the masses must coincide with $\fn_n^+$ or $\fn_k^-$. This then implies $\xi = \xi(\fn_k^+)$ or $\xi(\fn_k^-)$, which is a contradiction.

(b). Let's assume $\xi=\xi(\fn^+_k)$ (the proof for $\xi= \xi(\fn_k^-)$ is exactly the same), and $\tilde{q}^n(t)= (\tilde{\zt}^n(t), 0)$ is a sequence of $2\pi$-periodic loops
$$ \tilde{\zt}^n(t)= \lmd_n e^{-\J k t} \fn^+_{k}, \; \forall t \in \rr,$$
with $\lmd_n >0$, for each $n$, and $\lim_{n \to \infty} \lmd_n=0$. By a straight forward computation, 
$$ \lim_{n \to \infty} \A_k(\tilde{q}^n; 2\pi)=0.$$
As we explained in Section \ref{sec:intro}, $\qn \in \Lmd_{\xi}^{D_N}$ and obviously it satisfies \eqref{eq: coercive z}. Therefore
$$ \inf\{ \A_k(q)| \; q \in \ldn_{\xi} \text{ satisfying } \eqref{eq: coercive z} \} \le \lim_{n \to \infty} \A_k(\tilde{q}^n; 2\pi) =0.$$
Meanwhile it is obvious $\inf\{ \A_k(q)| \; q \in \ldn_{\xi} \text{ satisfying } \eqref{eq: coercive z} \} \ge 0$. Hence
$$ \inf\{ \A_k(q)| \; q \in \ldn_{\xi} \text{ satisfying } \eqref{eq: coercive z} \} = \lim_{n \to \infty} \A_k(\tilde{q}^n; 2\pi) =0.$$
For the rest of the property, let $\qn \in \ldn_{\xi}$ be an arbitrary sequence satisfying \eqref{eq: coercive z} and $\lim_{n \to \infty} \A_k(\qn; 2\pi)=0$, since
$$ \A_k(\qn; 2\pi) \ge \int_{0}^{2\pi} U(\qn) \,dt = \int_0^{2\pi} \sum_{\{i< j\} \subset \N} \frac{1}{|\qn_i-\qn_j|} \,dt,$$
the following must hold 
$$ \min_{\{i < j\} \subset \N, \; t \in [0, 2\pi]} |\qn_i(t)-\qn_j(t)| \to \infty, \; \text{ as } n \to \infty,$$
which then implies $\| \qn \|_{\hnm} \to \infty$. Therefore $\qn$ does not converge to any loop in $\ldn_{\xi}$, and this finishes our proof of property (b). 

(c). Like before we will only consider the case $\xi=\xi(\fn_k^+)$. Recall that $\qn(t)=(\zt^n_i(t), z^n_i(t))_{i \in \N}$, first we will show after passing to a subsequence, $z^n_i(t)$ converges uniformly to $0$, for each $i \in \N$. By property (a) in Lemma \ref{lm: no coercive}, $\|z^n_i\|_{\hnm}$ has an finite upper bound independent of $i$ and $n$. Then after passing to a subsequence, for each $i$, $z^n_i(t)$ converges uniformly to a function $z_i(t)$ on $[0, 2\pi]$. We claim $z_i(t)$ is constant for all $t \in [0, 2\pi]$. Otherwise, let's assume there exist $0 \le t_1 < t_2 \le 2\pi$, such that 
$$  |z^n_i(t_1) -z^n_i(t_2)| \ge C >0, \; \; \text{ for } n \text{ large enough}. $$
Then by Cauchy-Schwarz inequality, for $n$ large enough, 
$$ \int_0^{2\pi} |\zd^n_i|^2 \,dt \ge \frac{1}{2\pi} \left( \int_0^{2\pi} |\zd^n_i| \,dt \right)^2 \ge \frac{C^2}{2\pi} > 0. $$
This then implies 
$$\A_k(\qn; 2\pi) \ge \frac{N}{2} \int_0^{2\pi} |\zd^n_i|^2 \,dt \ge \frac{N C^2}{4 \pi} >0,$$
which is absurd. This proves the claim that $z_i(t)$ is a constant, for all $t$. Meanwhile to satisfy condition \eqref{eq: coercive z}, this constant must be zero.

Meanwhile as $|\qn_i(t)| \ge |\zt^n_i(t)|$, for any $i$ and $n$. By property (b) in Lemma \ref{lm: no coercive}, $|\zt^n_i(t)| \to \infty$ uniformly on $\rr$, for each $i \in \N$, then so is $|\qn_i(t)|$. This finishes our proof of property (c). 

With the above result, a similar argument as in the proof of property (a) above can show $\frac{\qn(t)}{|\qn(t)|}$ converges uniformly to a rotating regular $N$-gon, after passing to a subsequence, and since $\qn \in \ldn_{\xi(\fn_k^{+})}$, after normalization the rotating regular $N$-gon must be $e^{-\J kt} \frac{\fn_k^{+}}{|\fn_k^{+}|}$.

\end{proof}


\section{Connecting planar linear chains in rotating frame} \label{sec: rotate}
In this section we will study the properties of action minimizers of $\ao$ in $\ldn_{\xi}$ (under certain coercive conditions) as $\om$ changes from $0$ to $N$. First we obtain a result analogous to Lemma \ref{lm: monotone fixed}, although it only holds for the $z$-component. 

\begin{lm}
\label{lm: monotone rotate} For any $\om \in \rr$ and $\xi \in \Xi_N$, if $q \in \ldn_\xi$ is a minimizer of $\ao$ in $\ldn_\xi$, then
\begin{equation}
\label{eq: monotone rotate} \text{either } z_0(t_1) \le z_0(t_2) \text{ or }z_0(t_1) \ge z_0(t_2) \text{ always holds, } \forall 0 \le t_1 \le t_2 \le \pi. 
\end{equation}
\end{lm}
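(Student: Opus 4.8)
\emph{The plan} is to adapt the proof of Lemma \ref{lm: monotone fixed}(b) to the rotating setting. The observation that makes this work is that in the rotating kinetic form $K_\om$ of \eqref{eq: K om} the vertical coordinates $z_i$ enter only through $\ey\sum_{i\in\N}|\dot z_i|^2$, exactly as in the non-rotating case; the angular velocity couples solely to $\zt_i=x_i+\J y_i$. Hence the vertical-rearrangement construction used to prove Lemma \ref{lm: monotone fixed}(b) will decrease --- or preserve --- $\ao$ in precisely the same way it does $\A$, and I would simply repeat it.

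Concretely, given a minimizer $q\in\ldn_\xi$ of $\ao$, I would first note that, since the $z$-axis is a rotation axis for the $D_N$-action, $L_\om$ is $D_N$-invariant, so $\ao(q;2\pi)=2N\,\ao(q;\pi/N)$ and it suffices to compare action values on the fundamental domain $[0,\pi/N]$. I would then define $\qt\in H^1([0,\pi/N],\rr^{3N})$ by keeping $\xt_i=x_i$, $\yt_i=y_i$ and replacing $z_i$ by the monotone rearrangement $\ztl_i(t)=\pm\int_0^t|\dot z_i(s)|\,ds+C_i$, with the signs distributed as in \eqref{eq: ztl i N-1-i} and the constants $C_i$ chosen so that the $z$-part of \eqref{eq: symmetric boundary moment 1} holds; this is verbatim the construction in the proof of Lemma \ref{lm: monotone fixed}. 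Choosing $C_0$ so that \eqref{eq: coercive z} also holds for $\qt$, one has $\qt\in\ldn_\xi$ (the $\xi$-topological constraints see only $y_0$, which is unchanged) and $\qt$ satisfies whichever coercive normalisation $q$ does, since $x_0$ is untouched. Because $\tilde\zt_i=\zt_i$ and $|\dot{\ztl}_i|=|\dot z_i|$ a.e., the kinetic contribution of every mass over $[0,\pi/N]$ is unchanged: $\int_0^{\pi/N}|\dot{\tilde\zt}_i+\J\om\tilde\zt_i|^2\,dt=\int_0^{\pi/N}|\dot\zt_i+\J\om\zt_i|^2\,dt$ and $\int_0^{\pi/N}|\dot{\ztl}_i|^2\,dt=\int_0^{\pi/N}|\dot z_i|^2\,dt$.

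Next I would run the potential comparison exactly as in Lemma \ref{lm: monotone fixed}: decompose $\ao(\cdot;\pi/N)=\sum_{k=0}^{N-1}\ao^k(\cdot;\pi/N)$ using the permutation $\sg$ of \eqref{eq: sg}, and invoke the telescoping triangle-inequality estimate, which gives $|z_{\sg(i)}(t)-z_{\sg(k)}(t)|\le|\ztl_{\sg(i)}(t)-\ztl_{\sg(k)}(t)|$ on $[0,\pi/N]$ and depends only on the $z$-boundary conditions, hence is insensitive to $\om$. This yields $\ao^k(\qt;\pi/N)\le\ao^k(q;\pi/N)$ for all $k$, with equality throughout if and only if \eqref{eq: increasing} or \eqref{eq: decreasing} holds; minimality of $q$ then forces equality, so $\dot z_i$ keeps a fixed sign on $[0,\pi/N]$ for each $i\in\N$, with the signs across different $i$ arranged as in \eqref{eq: increasing}/\eqref{eq: decreasing}. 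Transporting these sign conditions through the $D_N$-action concatenates the monotone pieces into a monotone $z_0$ on $[0,\pi]$, which is \eqref{eq: monotone rotate}.

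I do not expect a genuine obstacle here, since everything reduces to Lemma \ref{lm: monotone fixed}; the only points that need a little care are (i) checking that $\qt$ stays admissible --- immediate because the construction touches only the $z$-coordinates and \eqref{eq: coercive z} is restored by the choice of $C_0$ --- and (ii) reading off the equality case on the full half-period $[0,\pi]$ rather than on $[0,\pi/N]$, which is handled by propagating \eqref{eq: increasing}/\eqref{eq: decreasing} through the $D_N$-symmetry. There is no analogue of Lemma \ref{lm: monotone fixed}(a),(c) for the horizontal coordinates precisely because $K_\om$ does not split the $x$- and $y$-motions, which is why the statement is confined to the $z$-component.
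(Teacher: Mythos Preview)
Your proposal is correct and is exactly the approach the paper takes: define $\qt$ as in the proof of Lemma \ref{lm: monotone fixed}, observe that $\int_0^{\pi/N}K_\om(q,\qd)\,dt=\int_0^{\pi/N}K_\om(\qt,\dot{\qt})\,dt$ because $\om$ couples only to $\zt_i$, and then reuse verbatim the potential comparison and equality-case analysis from that lemma. The paper's own proof is in fact just a two-line reference back to Lemma \ref{lm: monotone fixed} plus the kinetic-energy identity you isolated.
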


\begin{proof}
Let $\qt(t) \in \ldn_\xi$ be defined as in the proof of Lemma \ref{lm: monotone fixed}, the desired result follows from the same argument given there, once we notice that for any $\om \in \rr$, 
\begin{equation}
\label{eq: K om q q tilde} \int_0^{\pi/N} K_{\om}(q, \qd) \,dt = \int_0^{\pi/N} K_{\om}(\qt, \dot{\qt}) \,dt, \;\; \forall i \in \N. 
\end{equation}
\end{proof}

\begin{rem}
The reason that a corresponding result for the $x$-component does not hold when $\om \ne 0$ is \eqref{eq: K om q q tilde} generally does not hold for $\om \ne 0$, if we exchange the role of $x$ and $z$ coordinates during the definition of $\tilde{q}$. 
\end{rem}

\begin{lm}
\label{lm: strict monotone rotate} For any $\xi \in \Xi_N$ and $\om \in \rr$, if $q \in \ldn_\xi$ is a minimizer of $\ao$ among all loops in $\ldn_{\xi}$ satisfying \eqref{eq: coercive z} (and \eqref{eq: coercive x}, if $\om = kN$, $k \in \zz$), then the following results hold. 
\begin{enumerate}
\item[(a).] If $\om=0$ and $x_0(t)$ is not a constant for all $t \in \rr$, then $q$ is a collision-free $2\pi$-periodic solution of \eqref{eq:nbody}. Moreover $\xd_0(t)=0$, if and only if $t \in \{0, \pi\}$ and 
\begin{equation}
  \label{eq: xd 0 > 0} \begin{cases}
\xd_0(t) >0 \; (\text{resp.} <0), & \; \text{ if } t \in (0, \pi), \\
\xd_0(t) <0 \; (\text{resp.} >0), & \; \text{ if } t \in (\pi, 2\pi). 
\end{cases} 
  \end{equation} 
\item[(b).] For any $\om \in \rr$, if $z_0(t)$ is not a constant for all $t \in \rr$, then $q$ is a collision-free $2\pi$-periodic solution of \eqref{eq: nbody rotate}. Moreover $\zd_0(t)=0$, if and only if $t \in \{0, \pi\}$ and 
\begin{equation}
 \label{eq: zd 0 > 0} \begin{cases}
\zd_0(t) >0 \; (\text{resp.} <0), & \; \text{ if } t \in (0, \pi), \\
\zd_0(t) <0 \; (\text{resp.} >0), & \; \text{ if } t \in (\pi, 2\pi). 
\end{cases} 
 \end{equation}
\end{enumerate}
\end{lm}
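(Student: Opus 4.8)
The plan is to establish part (b) directly and obtain part (a) as the special case $\om=0$, where the equations \eqref{eq: nbody rotate} for the $x$-components have exactly the form of the $z$-equations and the weak monotonicity of $x_0$ is supplied by Lemma~\ref{lm: monotone fixed}(a) in place of Lemma~\ref{lm: monotone rotate}. So fix $\om\in\rr$, let $q$ be the given minimizer and assume $z_0$ is not constant. From \eqref{eq: symm q0} we have $z_0(2\pi-t)=z_0(t)$, hence $\zd_0(2\pi-t)=-\zd_0(t)$ and in particular $\zd_0(0)=\zd_0(\pi)=0$ automatically; it therefore remains to prove (I) $q$ is collision-free and (II) $\zd_0\neq 0$ on $(0,\pi)\cup(\pi,2\pi)$ with the stated signs. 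For the setup, Lemma~\ref{lm: monotone rotate} gives that $z_0$ is weakly monotone on $[0,\pi]$; naming the case so that it is non-decreasing, $z_0$ is non-increasing on $[\pi,2\pi]$ by the symmetry above, attains its minimum at $t=0$ and its maximum at $t=\pi$, and is not constant on $[0,\pi]$ (otherwise it would be constant everywhere). Through the choreography $z_i(t)=z_0(t+2\pi i/N)$ this fixes the sign of every $\zd_i$ on the fundamental domain $[0,\pi/N]$ as in Remark~\ref{rem: strict monotone}; this is precisely the ordering of the $z$-coordinates needed to place any colliding pair in the situation of Definition~\ref{dfn:sepa}.

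\emph{Step (I): $q$ is collision-free.} Suppose not. Using the Sundman estimates (Proposition~\ref{prop:sundman}) together with the standard exclusion, for action minimizers, of total collisions and of collisions of clusters of more than two bodies, one reduces to an isolated collision at a time $t_0$ made up of (possibly simultaneous) binary collisions; translating time, put $t_0=0$ and work on a short interval $[0,T]$. Let $\I=\{j,k\}$ be a colliding pair with asymptotic direction $\tht_j^+$ from Proposition~\ref{prop:angle}. If $\tht_j^+\neq\pm\tfrac{\pi}{2}\ (\mathrm{mod}\ 2\pi)$, Lemma~\ref{lm:dfm1} produces a competitor $\qe$ with $\ao(\qe;T)<\ao(q;T)$; by Lemma~\ref{lm:dfm1}(b) the $x$- and $z$-components are unchanged while $y^{\ep}_j(0)=-y^{\ep}_k(0)$ has a prescribed sign, which is exactly what guarantees (cf. Remark~\ref{rem: dfm1}) that $\qe$, extended by the $D_N$-symmetry, still lies in $\ldn_{\xi}$ and still satisfies \eqref{eq: coercive z} (and \eqref{eq: coercive x} when $\om\in\{0,N\}$), contradicting minimality. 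If $\tht_j^+=\pm\tfrac{\pi}{2}\ (\mathrm{mod}\ \pi)$, the ordering fixed in the setup shows that, after relabelling $j\leftrightarrow k$, $q$ is $z$-separated by $m_j,m_k$ on $[0,T]$; then either $z_j(T)>z_k(T)$ for some arbitrarily small $T>0$, so Lemma~\ref{lm:dfm2} yields a competitor of strictly smaller action, a contradiction, or $z_j\equiv z_k$ near $0$, whence $z_0(t+2\pi j/N)=z_0(t+2\pi k/N)$ for all $t$ and $z_0$ is periodic of period $2\pi|j-k|/N\in(0,2\pi)$, which together with the weak monotonicity on $[0,\pi]$ forces $z_0$ to be constant --- again a contradiction. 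Hence $q$ has no collision and is a real-analytic solution of \eqref{eq: nbody rotate}.

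\emph{Step (II): strict monotonicity.} Since $z_0$ is now real-analytic, weakly non-decreasing on $[0,\pi]$ and not constant there, it is injective on $[0,\pi]$. Suppose $\zd_0(t^*)=0$ for some $t^*\in(0,\pi)$. As $\zd_0\ge 0$ on $[0,\pi]$, the point $t^*$ is an interior minimum of $\zd_0$, so $\ddot{z}_0(t^*)=0$; but the $z$-equation in \eqref{eq: nbody rotate} carries no rotational term, so $\sum_{j\neq 0}\bigl(z_j(t^*)-z_0(t^*)\bigr)/|q_0(t^*)-q_j(t^*)|^3=0$. Here $z_j(t^*)=z_0(t^*+2\pi j/N)$, and using $z_0(2\pi-s)=z_0(s)$ together with the injectivity and monotonicity of $z_0$ on $[0,\pi]$ one checks that for $t^*$ in a one-sided neighbourhood of $0$ all the summands are strictly positive and for $t^*$ near $\pi$ all are strictly negative, so the equality above is impossible there; hence $\zd_0>0$ on small one-sided neighbourhoods of $0$ and of $\pi$. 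For an interior zero $t^*$ bounded away from the endpoints one proceeds as in the strict-monotonicity arguments for action minimizers (cf. \cite{Y16s}): $\ddot{z}_0(t^*)=0$ makes $q(t^*)$ critical for the $z$-potential, and combining this with the monotone ordering of the $z_i$ and a local $D_N$-equivariant competitor in the $z$-direction one obtains a path of strictly smaller action unless $z_0$ is constant. Therefore $\zd_0\neq 0$ on $(0,\pi)$, hence $\zd_0>0$ there by continuity, and the sign on $(\pi,2\pi)$ follows from $\zd_0(2\pi-t)=-\zd_0(t)$; finally, being a collision-free critical point of $\ao$ in $\ldn_{\xi}$, $q$ solves \eqref{eq: nbody rotate} by the Palais symmetric principle.

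\emph{The main difficulty} is Step (I): reducing a general cluster collision to simultaneous binary pairs, and then verifying that the competitors produced by Lemmas~\ref{lm:dfm1} and \ref{lm:dfm2} genuinely remain in $\ldn_{\xi}$ and keep the coercivity normalizations, is where the real work lies. The reason Lemma~\ref{lm: monotone rotate} is indispensable is that it converts the one asymptotic direction the local deformation cannot handle, $\tht_j^+=\pm\pi/2$ (Gordon's obstruction), into a $z$-separated configuration to which the global deformation of Lemma~\ref{lm:dfm2} applies; a secondary (but more routine) difficulty is ruling out interior zeros of $\zd_0$ away from the endpoints in Step (II).
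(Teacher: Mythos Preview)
There is a genuine structural gap coming from the order of your two steps. The paper proves the two conclusions in the \emph{opposite} order: first strict monotonicity $\zd_i\neq 0$ on the open interval $(0,\pi/N)$ (where, by Marchal--Chenciner, $q$ is already collision-free because the $\xi$-constraints act only at the endpoints), and only then does it attack possible collisions at $t=0,\pi/N$. That order is essential. Once $\zd_0>0$ on $(0,\pi)$, the choreography together with \eqref{eq: symmetric boundary moment 1} forces all $z_i(0)$ to be pairwise distinct except for the symmetry pairs $\{i,N-i\}$; hence any collision at $t=0$ is automatically \emph{binary}, the colliding pair is $z$-separated with the strict inequality $z_j(\pi/N)>z_k(\pi/N)$, and Lemma~\ref{lm:dfm2} applies.

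Your Step~(I) presupposes exactly these facts without having them. The ``standard exclusion of clusters of more than two bodies'' you invoke is a free-minimizer result and does not apply at $t=0,\pi/N$, where the $\xi$-constraints obstruct the local deformations (compare Lemma~\ref{lm: collision}, which treats this case separately and only after knowing the minimizer lies in the $xy$-plane). Your fallback ``$z_j\equiv z_k$ near $0$ forces $z_0$ to have period $2\pi|j-k|/N$'' needs analytic continuation, but $q$ has a collision at $t=0$ (and possibly at every $\ell\pi/N$), so $z_0$ is only piecewise analytic and the argument breaks. Likewise, the claim $\zd_0(0)=\zd_0(\pi)=0$ ``automatically'' from symmetry presumes differentiability at those moments, which is exactly what is in question. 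Your Step~(II) is also incomplete: the sign argument via $\ddot z_0(t^*)=0$ only covers $t^*$ near $0$ or $\pi$, and for the rest of $(0,\pi)$ you give only a citation. The paper handles all $t_0\in(0,\pi/N)$ uniformly by an explicit $D_N$-equivariant competitor (shifting the masses by $\pm\ep^2\e_3$ according to index and interpolating $m_k$ near $t_0$), obtaining a kinetic cost $O(\ep^3)$ against a potential gain $O(\ep^2)$ from the pair $m_0,m_n$; this is precisely where the hypothesis $z_0\not\equiv\text{const}$ enters, through $z_n(0)-z_0(0)>0$.
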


\begin{proof} We will give a detailed proof of property (b), while property (a) can be proven similarly. 

Let's assume $N=2n$ (the proof for $N=2n+1$ is similarly and will be omitted) . By Lemma \ref{lm: monotone rotate}, $z_0(t)$ satisfies \eqref{eq: monotone rotate}. Without loss of generality, we will assume
\begin{equation}
\label{eq: z0 t1 le z0 t2} z_0(t_1) \le z_0(t_2), \; \; \forall 0 \le t_1 \le t_2 \le \pi.
\end{equation}
Due to the $D_N$-symmetry, this is implies \eqref{eq: symmetric boundary moment 1} and 
\begin{equation}
\label{eq: symm monotone z} \forall 0 \le t_1 < t_2 \le \frac{\pi}{N}, \;\; \begin{cases}
z_i(t_1) \le z_i(t_2), \; & \text{ if } i \in \{0, \dots, n-1 \}, \\
z_i(t_1) \ge z_i(t_2), \; & \text{ if } i \in \{n, \dots, N-1 \}. 
\end{cases}
\end{equation}

The $\xi$-topological constraints are only imposed on the boundary moments of the fundamental domain $[0, \pi/N]$. Hence for any $t \in (0, \pi/N)$, $q(t)$ is a local minimizer of $\ao$ among all paths defined in a small neighborhood of $t$ with the same fixed ends. By the result of Marchal and Chenciner \cite{C02}, $q(t)$ must be collision-free, for any $t \in (0, \pi/N)$. Therefore it satisfies \eqref{eq: nbody rotate}. As a result, $\zd_i(t)$ is well-defined, for any $t \in (0, \pi/N)$ and $i \in \N$. By \eqref{eq: symm monotone z}, it means
\begin{equation}
\label{eq: zd ge 0} \forall t \in (0, \frac{\pi}{N}), \;\; \begin{cases}
\zd_i(t) \ge 0, \; & \text{ if } i \in \{0, \dots, n-1 \}, \\
\zd_i(t) \le 0, \; & \text{ if } i \in \{n, \dots, N-1 \}. 
\end{cases}
\end{equation}

Since $z_0(t), \forall t \in \rr$, is not a constant, by \eqref{eq: z0 t1 le z0 t2}, 
$$  z_n(0) - z_0(0)=z_0(\pi)- z_0(0) >0. $$
As a result, for $\dl>0$ small enough, there is a positive constant $C_1$, such that 
\begin{equation}
\label{eq: zn - z0} z_n(t)-z_0(t) \ge C_1, \; \forall t \in [0, \dl]. 
\end{equation}
Using this, we will show the inequalities in \eqref{eq: zd ge 0} must be strict. Without loss of generality, let's assume $\zd_k(t_0)=0$, for some $t_0 \in (0, \pi/N)$ and $k \in \{0, \dots, n-1\}$ (the cases for $k \in \{n, \dots, N-1\}$ can be proven similarly). For $\ep>0$ small enough, we define a new path $\qe \in \ldn_\xi$ by
$$ \qe_k(t) = \begin{cases}
q_k(t) - \ep^2 \e_3, \; & \forall t \in [0, t_0-\ep], \\
q_k(t)+ (t-t_0)(2\ep-|t-t_0|)\e_3, \; & \forall t \in [t_0-\ep, t_0 +\ep], \\
q_k(t) + \ep^2 \e_3, \; & \forall t \in [t_0+ \ep, \frac{\pi}{N}], 
\end{cases} $$
$$ \qe_i(t)= \begin{cases}
q_i(t)- \ep^2 \e_3, & \text{ if } i \in \{0, \dots, k-1 \} \cup \{N-k, \dots, N-1\}, \\
q_i(t) + \ep^2 \e_3, & \text{ if } i \in \{k+1, \dots, N-1-k\}, 
\end{cases} \; \forall t \in [0, \frac{\pi}{N}]. $$
We may also need to shift $\qe(t)$ by a constant along the $z$-axis to make sure it satisfies \eqref{eq: coercive z}. Since $\zd_k(t_0)=0$, there is a constant $C_2>0$ independent of $\ep$, such that $ |\zd_k(t)| \le C_2 |t-t_0|$, for $|t- t_0|$ small enough. Then by a simple computation, 
\begin{equation} \label{eq: K zd 0} \begin{split}
\int_{0}^{\frac{\pi}{N}} K_{\om}(\qe, \qd^{\ep}) & - K_{\om}(q, \qd)\,dt  = \ey \int_{t_0-\ep}^{t_0+\ep} |\dot{z}_k^{\ep}|^2- |\dot{z}_k|^2 \,dt \\
& = 2 \int_{t_0-\ep}^{t_0+\ep} (\ep-|t-t_0|)^2 + \zd_k(t)(\ep-|t-t_0|) \,dt \le C_3 \ep^3,
\end{split}
\end{equation}
where $C_3>0$ is a constant independent of $\ep$. This controls the change in kinetic energy. For potential energy, we notice that by \eqref{eq: symmetric boundary moment 1} and \eqref{eq: symm monotone z},
\begin{equation}
\label{eq: qei-qej ge qi-qj} |\qe_i(t)-\qe_j(t)| \ge |q_i(t)- q_j(t)|, \; \forall t \in [0, \frac{\pi}{2}], \; \forall \{i \ne j\} \subset \N. 
\end{equation}
Moreover we can always find a $\dl >0$ small enough (in particular, when $k=0$, we need $\dl < t_0-\ep$), such that for any $t \in [0, \dl]$,
\begin{equation}
\label{eq: qen - qe0}
\begin{split}
|\qe_n(t)-\qe_0(t)|^2 & = (x_n(t)-x_0(t))^2+(y_n(t)-y_0(t))^2 + (z_n(t)-z_0(t)+2\ep^2)^2 \\
					 & = |q_n(t)-q_0(t)|^2+4(z_n(t)-z_0(t))\ep^2 + 4\ep^4. \end{split}
\end{equation}
Meanwhile there is constant $C_4>0$ independent of $\ep$, such that 
\begin{equation}
\label{eq: qn-q0} |q_n(t)- q_0(t)|^{-1} \ge C_4, \; \forall t \in [0, \dl].
\end{equation}

Combining this with \eqref{eq: zn - z0} and \eqref{eq: qen - qe0}, for any $t \in [0, \dl]$, we have 
\begin{equation} \label{eq: differe qn-q0}
\begin{split}
& \frac{1}{|\qe_n(t) -\qe_0(t)|} -\frac{1}{|q_n(t)-q_0(t)|} \\
 & = \frac{1}{|q_n(t)-q_0(t)|} \left[ \left(1+\frac{4(z_n(t)-z_0(t)) \ep^2}{|q_n(t)-q_0(t)|^2} + \frac{4\ep^4}{|q_n(t)-q_0(t)|^2}\right)^{-\ey} -1 \right] \\
& \le -C_5 \ep^2,
\end{split}
\end{equation}
where $C_5>0$ is independent of $\ep$. By \eqref{eq: qei-qej ge qi-qj} and \eqref{eq: differe qn-q0}, 
\begin{equation}
\begin{split}
\int_0^{\frac{\pi}{N}} U(\qe)- U(q) \,dt & \le \int_{0}^{\dl} \frac{1}{|\qe_n(t) -\qe_0(t)|} -\frac{1}{|q_n(t)-q_0(t)|} \,dt \\
& \le \int_0^{\dl} -C_5 \ep^2 \,dt = -C_5 \dl \ep^2. 
\end{split}
\end{equation}
As a result, for $\ep$ small enough, 
$$ \ao(\qe; \pi/N) - \ao(q; \pi/N) \le C_3 \ep^3 -C_5 \dl \ep^2 < 0,$$
which is a contradiction to the minimization property of $q$. This shows all the inequality in \eqref{eq: zd ge 0} must be strict, i.e.
\begin{equation}
\label{eq: zd > 0} \forall t \in (0, \frac{\pi}{N}), \;\; \begin{cases}
\zd_i(t) > 0, \; & \text{ if } i \in \{0, \dots, n-1 \}, \\
\zd_i(t) < 0, \; & \text{ if } i \in \{n, \dots, N-1 \}. 
\end{cases}
\end{equation}

For a moment, let's assume $q(t)$ is collision-free at both $t=0 $ and $\pi/N$. Then $\zd_i(t)$ are well-defined, for any $t \in \{0, \pi/N\}$ and $i \in \N$, and a similar argument as above will show 
\begin{equation} \label{eq: zd > 0 boudnary}
\forall t \in \{0, \frac{\pi}{N}\}, \;\; \begin{cases}
\zd_i(t) > 0, \; & \text{ if } i \in \{1, \dots, n-1 \}, \\
\zd_i(t) < 0, \; & \text{ if } i \in \{n+1, \dots, N-1 \}. 
\end{cases}
\end{equation}
Due to the $D_N$-symmetry, \eqref{eq: zd > 0} and \eqref{eq: zd > 0 boudnary} immediately imply \eqref{eq: zd 0 > 0}. 

Meanwhile by action minimization property of $q$, $q_0(t)$ and $q_n(t)$ must hit the $xz$-plane perpendicularly, which means $\zd_0(0) = \zd_n(0) =0.$ Again by the $D_N$-symmetry, this implies $\zd_0(0)= \zd_0(\pi)=0$. 

After the above argument, the only thing left for us is to show $q(t)$ is collision-free at the boundary moments $t=0$ and $\pi/N$. By a contradiction argument, let's assume $q(0)$ is not collision-free (the proof for $q(\pi/N)$ is similar and will be left to the readers). By \eqref{eq: zd > 0}, only binary collisions are possible at $t=0$. Assuming there is a binary collision between $m_j$ and $m_k$ at $t=0$, for some $j < k$ (due to the $D_N$-symmetry, $j+k=N$). Notice that we may have more than one isolated binary collision at this moment. 

Let's us follow the notations set up in Section \ref{sec: lemmas}. By Proposition \ref{prop:angle}, $m_i$, $i \in \{j, k\}$, approaches to the binary collision along a definite direction given by the unit vector $(1, \phi^+_i, \tht^+_i)$ under spherical coordinates. Depending on the value of $\tht_i^+$, either Lemma \ref{lm:dfm1} or \ref{lm:dfm4} will be used to get a contradiction. To ensure the $\xi$-topological constraints will be satisfied in our argument, we need to know the precise value of  $\xi_{2j}$ (the $2j$-th component of $\xi$), as it determines the relative position of $m_j$ and $m_k$ along the $y$-axis. Without loss of generality, let's assume $\xi_{2j}=1$. This means for any $q^* \in \ldn_{\xi}$, $y^*_j(0) \ge 0 \ge y^*_k(0).$

First, if $\tht_j^+ \ne -\pi/2$, then by Lemma \ref{lm:dfm1}, for $\ep>0$ small enough, we can make a local deformation of $q$ to get a new path $\qe \in H^1([0, \pi/N], \rr^{3N})$ satisfying $\ao(\qe; \pi/N) < \ao(\qe; \pi/N)$ and $y^{\ep}_j(0)= -y^{\ep}_k(0)>0$. In particular, $\qe \in \ldn_{\xi}$, which is absurd. Notice that here and in the following we may need to shift $\qe$ by a proper constant along the $z$-axis to make sure \eqref{eq: coercive z} are satisfied. Similarly we may also need to shift the deformed path along the $x$-axis by a proper constant, when \eqref{eq: coercive x} is required to be satisfied. 

Second, if $\tht_j^+= -\pi/2$, by \eqref{eq: zd > 0}, $q(t)$, $t \in [0, \pi/N]$ is $z$-separated (by $m_j$ and $m_k$) with $z_j(\pi/N) > z_k(\pi/N)$. Then for $\ep>0$ small enough, by Lemma \ref{lm:dfm2}, we can find a new path $\qe \in \ldn_{\xi}$ with $\ao(\qe; \pi/N) < \ao(q: \pi/N)$, which is a contradiction.  
\end{proof}

\begin{lm}
\label{lm: collision} For any $\xi \in \Xi_N$ and $\om$, let $q \in \ldn_\xi$ be a minimizer of $\ao$ among all loops in $\ldn_{\xi}$ satisfying \eqref{eq: coercive z} (and \eqref{eq: coercive x}, if $\om = kN$, $k \in \zz$), if $\Delta^{-1}(q)$ is not empty, then the following must hold. 
\begin{enumerate}
\item[(a).] $z_i(t) \equiv 0$, $\forall t \in \rr$ and $ \forall i \in \N$.
\item[(b).] $\Delta^{-1}(q) \subset \{ t = \ell \pi/N: \; \ell \in \zz\} $, and $q(t)$ is collision-free and satisfies \eqref{eq: nbody rotate}, for any $t \in \rr \setminus \Delta^{-1}(q)$.
\item[(c).] For any $t \in \Delta^{-1}(q)$, if $q(t)$ has an $\I$-cluster collision for some $\I \subset \N$, then $|\I|=2$. Moreover when $t=0$ or $\pi/N$ and $\I = \{j, k\}$, the it must satisfies \eqref{eq; possible binary collision}.
\end{enumerate}
\end{lm}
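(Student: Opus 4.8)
The plan is to prove Lemma~\ref{lm: collision} by combining the monotonicity established in Lemma~\ref{lm: strict monotone rotate} with a careful case analysis of the possible collision types, using the deformation lemmas from Section~\ref{sec: lemmas} to rule out the bad cases.

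First I would prove part (a). Suppose $z_0(t)$ is not constant. Then by Lemma~\ref{lm: strict monotone rotate}(b), $q$ would be a collision-free $2\pi$-periodic solution of \eqref{eq: nbody rotate}, contradicting $\Delta^{-1}(q)\ne\emptyset$. Hence $z_0(t)\equiv$ const, and by \eqref{eq: coercive z} this constant is $0$; the $D_N$-symmetry \eqref{eq: simple choreography solution} then forces $z_i(t)\equiv 0$ for all $i\in\N$, so the whole solution lies in the $xy$-plane. For part (b), I would first observe that, since the $\xi$-topological constraints \eqref{eq: symmetric boundary moment 1} are imposed only at the boundary moments $t=0$ and $t=\pi/N$ of the fundamental domain, for every $t\in(0,\pi/N)$ the path $q$ is a local minimizer of $\ao$ with fixed endpoints near $t$; by the Marchal--Chenciner result cited in the proof of Lemma~\ref{lm: strict monotone rotate} (see \cite{C02}), $q(t)$ is collision-free for $t\in(0,\pi/N)$, hence solves \eqref{eq: nbody rotate} there. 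The $D_N$-symmetry then propagates this to all $t\notin\{\ell\pi/N:\ell\in\zz\}$, giving $\Delta^{-1}(q)\subset\{\ell\pi/N:\ell\in\zz\}$.

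The heart of the lemma is part (c): at a collision moment $t_0=\ell\pi/N$, every cluster collision must be binary, and at $t_0=0$ (resp.\ $\pi/N$) the only admissible binary pairs are $\{i,N-i\}$ (resp.\ $\{i,N-1-i\}$). Since by (a) the solution is planar (contained in the $xy$-plane), I would argue by contradiction: if $q(t_0)$ had an $\I$-cluster collision with $|\I|\ge 3$, apply Lemma~\ref{lm:dfm4} with a suitable $\tau\in\mf{T}$ supported on $\I$ and respecting the $D_N$-symmetry (as indicated in the remark after Lemma~\ref{lm:dfm4}, one modifies the condition $\tau_i=0$ on $\N\setminus\I$ so the deformed path stays in $\ldn_\xi$), pushing the colliding masses \emph{out of the $xy$-plane} in opposite $z$-directions; this strictly decreases $\ao$, contradicting minimality. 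The admissibility of the binary pairs at $t=0,\pi/N$ then follows directly from the boundary identities \eqref{eq: symmetric boundary moment 1}: a binary collision $q_j(0)=q_k(0)$ with $j<k$ combined with $q_i(0)=\R_{xz}q_{N-i}(0)$ forces $\{j,k\}=\{i,N-i\}$, and similarly at $\pi/N$; for general $\ell$ one transports via the $D_N$-action.

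The main obstacle I anticipate is making the $|\I|\ge 3$ elimination fully rigorous while respecting the symmetry constraint. Lemma~\ref{lm:dfm4} as stated deforms along $\e_3$ with $\tau_i=0$ off $\I$, but to land back in $\ldn_\xi$ one needs the deformation to be compatible with \eqref{eq: symmetric boundary moment 1} and the choreography relation \eqref{eq: simple choreography solution}; this requires choosing $\tau$ equivariantly and checking that the cut-off function $f$ can be taken symmetric about the collision moment, so that the deformed path is still $D_N$-equivariant and still satisfies the $\xi$-constraints (which only see the $y$-components and are thus unaffected by a $z$-perturbation). A secondary technical point is ruling out the possibility that a cluster collision at $t_0\in(0,\pi/N)$ — already excluded by part (b) — but one must be careful that ``isolated'' collision moments is automatic here since $\Delta^{-1}(q)$ is contained in the discrete set $\{\ell\pi/N\}$. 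Once these bookkeeping issues are handled, each individual deformation estimate is a routine application of the Section~\ref{sec: lemmas} lemmas.
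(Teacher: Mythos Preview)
Your proofs of (a) and (b) are correct and essentially identical to the paper's.

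For (c), your overall strategy (use Lemma~\ref{lm:dfm4} to push colliding masses out of the $xy$-plane) is right, but there is a genuine gap in your argument for the admissible binary pairs. You claim that ``a binary collision $q_j(0)=q_k(0)$ with $j<k$ combined with $q_i(0)=\R_{xz}q_{N-i}(0)$ forces $\{j,k\}=\{i,N-i\}$.'' This is false: the boundary identities only force the \emph{paired} collision $q_{N-j}(0)=q_{N-k}(0)$, not $j+k=N$. Nothing in \eqref{eq: symmetric boundary moment 1} alone rules out two simultaneous binary collisions $\{j,k\}$ and $\{N-j,N-k\}$ with $j+k\ne N$. The paper closes this gap by applying Lemma~\ref{lm:dfm4} again: with $\tau_j=\tau_{N-j}=1$, $\tau_k=\tau_{N-k}=-1$ (and $\tau_i=0$ otherwise), the deformation is compatible with \eqref{eq: symmetric boundary moment 1} (since $\tau_i=\tau_{N-i}$ for all $i$), separates both binary pairs in the $z$-direction, and strictly lowers the action.

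The paper also handles a preliminary step you omit: it first excludes $m_0$ (and $m_n$ when $N=2n$) from any collision cluster at $t=0$, by taking $\tau_0=-1$ and $\tau_i=0$ otherwise in Lemma~\ref{lm:dfm4}. Only after this does the main case analysis for $|\I|\ge 3$ proceed, splitting on whether some reflected pair $\{i,N-i\}$ lies in $\I$ (in which case the collision point is on the $xz$-plane and $\I$ is automatically reflection-invariant) or not (in which case $\I$ has a disjoint ``mirror'' cluster $\I'$). In both cases the symmetric choice $\tau_i=\tau_{N-i}$ is available, which is exactly the equivariance condition you correctly identify as the main obstacle. Your anticipated difficulty is real but resolvable; the actual error is the shortcut you take on the binary-pair constraint.
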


\begin{proof}
We will only give a detailed proof for $N=2n$, while the proof for $N=2n+1$ is similar and will be omitted. 

(a). Assume the result of property (a) does not hold, then $z_0(t) \ne \text{Constant}$, for all $t \in \rr$. By Lemma \ref{lm: strict monotone rotate}, $q$ must be collision-free, which is a contradiction. 

(b). Like the argument given in Lemma \ref{lm: strict monotone rotate}, as the $\xi$-topological constraints are essentially imposed on the boundary moments of the fundamental domain $[0, \pi/N]$, $q(t)$ is collision-free and satisfies equation \eqref{eq: nbody rotate}, for any $t \in (0, \pi/N)$. By the $D_N$-symmetry, the same result must hold for any $t \in \rr \setminus \{ t = \ell \pi/N: \; \ell \in \zz\}$ as well. 

(c). First let us assume $t=0 \in \Delta^{-1}(q)$ and  $q(0)$ has an $\I$-cluster collision. Notice that $m_0$ does not collide with any other mass, when $t=0$. Otherwise we may choose a $\tau =(\tau_i)_{i \in \N} \in \mf{T}$ with $\tau_0=-1$ and $\tau_i=0$, $\forall i \ne 0$. Then for $\ep>0$ small enough, using Lemma \ref{lm:dfm4}, we can get a new path $\qe \in \ldn_{\xi}$ with $\ao(\qe;\pi/N) < \ao(q; \pi/N)$, which is absurd. By a similar argument, one can show $m_n$ does not collide with any other mass either, when $t=0$. 

Now we will show $|\I|=2$. By a contradiction argument, let us say $| \I | \ge 3$. First let us consider the case that there is an $i \in \{1, \dots, n-1\}$, such that  $\{i, N-i\} \subset \I$. As $|\I| \ge 3$, there is a $j \in \I \setminus \{i, N-i\}$. By \eqref{eq: symmetric boundary moment 1}, $q_i(0)=\R_{xz}q_{N-i}(0)$, this implies the $\I$-cluster collision must occur in the $xz$-plane. Since $j \ne 0$ or $n$, \eqref{eq: symmetric boundary moment 1} implies $q_j(0)= q_{N-j}(0)$. As a result, $\{i, j, N-i, N-j\} \subset \I.$ We will choose a $\tau=(\tau_k)_{k \in \N} \in \mf{T}$ with each $\tau_k=0$ except the following:  
$$ \tau_i= \tau_{N-i}=1, \; \tau_{j}=\tau_{N-j}=-1. $$
Then for $\ep>0$ small enough, by Lemma \ref{lm:dfm4}, we can find a new path $\qe \in \ldn_{\xi}$ with $\ao(\qe; \pi/N) < \ao(q; \pi/N)$, which is a contradiction. 

Now let's consider the case that $\{i, N-i \} \not\subset \I$, for any $i \in \{1, \dots, n-1\}$. Then $i+j \ne N$, for any $\{i \ne j \} \subset \I$. As a result, $q_i(0)=q_j(0)$ and \eqref{eq: symmetric boundary moment 1} implies $q_{N-i}(0)= q_{N-j}(0)$. This means there must be a $\I'$-cluster collision at the moment $t=0$ with $\{N-i, N-j\} \subset \I'$ and $\I \cap \I'=\emptyset$. Then a contradiction can be reach by Lemma \ref{lm:dfm4} with the same $\tau$ we just used. This proves our claim that $|\I|=2$. Notice that the same argument we just gave actually also implies $\I = \{i, N-i\}$ for some $i \in \{1, \dots, n-1\}$. This finishes our proof for $t=0$ being a collision moment. 

The proof is similar, when $t=\pi/N$ is a collision moment. For any other collision moment $ t \in \Delta^{-1}(q) \setminus \{0, \pi/N\}$, the result follows directly from the definition of $D_N$-symmetry.  
\end{proof}

Using the above results, we can prove Theorem \ref{thm: linear chain rotate}, \ref{thm: regularization} and \ref{thm: lc rotate extra sym}.
\begin{proof}[Proof of Theorem \ref{thm: linear chain rotate}] (a). When $\om=0$, the result follows from Theorem \ref{thm: linear chain}. When $\om=N$, it follows from Theorem \ref{thm: linear chain} as well. This is because with frequency $N$, from the moment $t=0$ to $t=\frac{\pi}{N}$, the $x$ and $y$-axis rotate around the $z$-axis by $\pi$, so they come back to the original line but with reversed directions. As a result, $q(t) \in \ldn_{\xi}$ in the rotating frame with frequency $\om=N$, if and only if $\ej q(t) \in \ldn_{\xi^*}$, with $\xi^*$ defined as in \eqref{eq: xi star}. 

(b) \& (c). These two properties follows directly from Lemma \ref{lm: strict monotone rotate} and \ref{lm: collision}.  
\end{proof}


\begin{proof}[Proof of Theorem \ref{thm: regularization}] Since $\qo(t)$ is an action minimizer of $\ao$ with collision, by Theorem \ref{thm: linear chain rotate} in the original non-rotating frame $q(t)= \ej \qo(t)$ is a collision solution of \eqref{eq:nbody} containing only binary collisions and the set of collision moments 
$$ \Delta^{-1}(q) \subset \{ t= \ell \pi: \; \ell \in \zz \}. $$
Without loss of generality, let's assume $t=0$ is a collision moment with $M$ ($1 \le M \le [N/2]$) pairs of binary collision: 
$$ q_{i_j}(0) = q_{N-i_j}(0), \;\; i_j \in \N \text{ for } j=1, \dots, M. $$ 
Recall that by Theorem \ref{thm: linear chain rotate}, a binary collision can only happen between two masses with their indices satisfying \eqref{eq; possible binary collision}. 

For each $1 \le j \le M$, following the notations from Section \ref{sec: lemmas}, we set 
$$ q_{c_j}(t):= \frac{1}{2}(q_{i_j}(t)+q_{N-i_j}(t)); \;\; \qf_{j}(t)=(\xf_{j}, \yf_{j}, \zf_{j})(t):=q_{i_j}(t)-q_{c_j}(t),$$
and in the spherical coordinates $(r, \phi, \tht)$ with $r \ge 0, \phi \in [0, \pi]$ and $\tht \in \rr$, we have
$$ \xf_{j}=r_{j} \sin \phi_j \cos \tht_j, \; \yf_j = r_j \sin \phi_j \sin \tht_j, \zf_j= r_j \cos \phi_j. $$
Moreover we define the energy of the sub-system consisting of $m_{i_j}$ and $m_{N-i_j}$ as 
\begin{equation*}
\label{eq: energy sub system} E_j(t)=E_j(q(t)):= \ey (|\qd_{i_j}(t)|^2 + |\qd_{N-i_j}(t)|^2)- \frac{1}{|q_{i_j}(t)-q_{N-i_j}(t)|}
\end{equation*}

We can always find a $\dl>0$ small enough, such that $q_i(t) \in C^2((-2\dl, 2\dl), \rr^3)$, $\forall i \notin \cup_{j=1}^M \{i_j, N-i_j \},$ as they represent the motions of masses not involved in any collision. Meanwhile for each $j=1, \dots, M$,
\begin{align*}
r_j(t) \in & C^0((-2\dl, 2\dl), \rr) \cap C^2((-2\dl, 0), \rr) \cap C^2((0, 2\dl), \rr);  \\
\phi_j(t) \in & C^2((-2\dl, 0), \rr) \cup C^2((0, 2\dl), \rr);  \\
\tht_j(t) \in & C^2((-2\dl, 0), \rr) \cup C^2((0, 2\dl), \rr); .
\end{align*}
Despite of the binary collision singularities, for each pair of $\{m_{i_j}, m_{N-i_j}\}$, their center of mass still satisfies
\begin{equation}
\label{eq: center of binary collision C1} q_{c_j}(t) \in C^2((-2\dl, 2\dl), \rr^3), 
\end{equation}
for a proof see \cite[Remark 4.10]{FT04}.

Following \cite{EB96}, using McGehee transformation \cite{MG74}, one can blow up the simultaneous binary collisions to certain manifold, which will be called the collision manifold. The collision manifold becomes boundaries of the phase space (after McGehee transformation). Then one can extend the vector field \eqref{eq Ham vector field} to the collision manifold, which are invariant under the extended flow. The extended flow on the collision manifold can be understood completely: the problem becomes $M$ pairs of decoupled two body problems on the collision manifold, and each pair of masses involved in the binary collisions make a complete revolution on a fixed plane around its center of mass and the energy of the sub-system is a first integral. With this one can find the unique ejection orbit associated with a given collision orbit. The proof is quite long and technical, in our setting we summerize it as following: 

$(q,\qd)(t)$, $t \in (0, 2\dl)$ is the unique ejection orbit associated with $(q, \qd)(t)$, $t \in (-2\dl, 0)$, if the following conditions hold for each $1 \le j \le M$, 
\begin{align}
\label{eq: phi reg} \lim_{t \to 0^-} \phi_j(t) &= \lim_{t \to 0^+} \phi_j(t), \;\;  \lim_{t \to 0^-} \dot{\phi}_j(t) = \lim_{t \to 0^+} \dot{\phi}_j(t)=0; \\
\label{eq: tht reg} \lim_{t \to 0^-} \tht_j(t) &=\lim_{t \to 0^+} \tht_j(t) (\text{mod} 2\pi), \;\;  \lim_{t \to 0^-} \dot{\tht}_j(t) =  \lim_{t \to 0^+} \dot{\tht}_j(t)=0; \\
\label{eq: energy reg} \lim_{t \to 0^-}E_j(t) &= \lim_{t \to 0^+} E_j(t). 
\end{align}

We explain why these conditions hold for the minimizer $q$: first, by property (c) in Theorem \ref{thm: linear chain rotate}, $q_i(t)$ belongs to the $xy$-plane, for any $i$ and $t$. Hence $\phi_j(t)=\pi/2$, for any $t$ an $1 \le j \le M$, which immediately implies \eqref{eq: phi reg}; second, the first equation in \eqref{eq: tht reg} following from Lemma \ref{lm:dfm1}, as otherwise using this lemma we can make a small local deformation of the collision solution near the collision moment and get a new path with strict smaller action value, and the second equation in \eqref{eq: tht reg} follows from property (b) in Proposition \ref{prop:angle} (the proposition is stated for a ejection solution, but the same holds for a collision solution as well); third, since $q$ is an action minimizer, by results from \cite[Section 4]{FT04}, we have $E_j(t) \in C^0((-2\dl, 2\dl), \rr)$, for each $1 \le j \le M$, which clearly implies \eqref{eq: energy reg}. 

We have proved $(q, \qd)(t)$, $t \in (-2\dl, 2\dl)$ is $C^0$ block-regularizable. The same argument can be applied to any other collision moment and this finishes our proof.
\end{proof}


\begin{proof}[Proof of Theorem \ref{thm: lc rotate extra sym}] The fact that a minimizer satisfies all the properties in Theorem \ref{thm: linear chain rotate} and \ref{thm: regularization} follows from the same arguments as before and will not be repeated. 
\end{proof}




\emph{Acknowledgements.} The author thanks Alain Chenciner, Jacques F\'ejoz for valuable discussions and permission to use the numerical pictures from their paper \cite{CF09}. He thanks Richard Montgomery and Carles Sim\'o for their interests and comments on this work. The main part of the work was done when the author was a postdoc at Ceremade, University of Paris-Dauphine and IMCCE, Paris Observatory. He thinks the hospitality of both institutes and financial support of FSMP. Part of the work was done, when the author was a visitor at Shandong University. He thanks Xijun Hu for financial support through NSFC(No.11425105).

\bibliographystyle{abbrv}
\bibliography{RefLinearChain}

\end{document}